\pgfplotsset{compat=1.14}
\newtheorem{lem}{Lemma}[section]
\newtheorem{thm}[lem]{Theorem}
\newtheorem{prop}[lem]{Proposition}
\newtheorem{cor}[lem]{Corollary}
\newtheorem{defn}[lem]{Definition}
\theoremstyle{definition}
\newtheorem{rmk}[lem]{Remark}
\def\CC{\mathbb C}
\def\RR{\mathbb R}
\def\ZZ{\mathbb Z}
\def\QQ{\mathbb Q}
\def\FF{\mathbb F}
\def\SpM{{\mathcal S}_M}
\def\Sp{\mathcal S}
\def\quadO{\mathcal O}
\def\QuatO{\mathfrak O}
\DeclareMathOperator\GL{\mathsf{GL}}
\DeclareMathOperator\End{End}
\DeclareMathOperator\trd{trd}
\DeclareMathOperator\N{N}
\DeclareMathOperator\nrd{nrd}
\DeclareMathOperator\disc{disc}
\DeclareMathOperator\Cl{Cl}
\newcommand{\legendre}[2]{\left(\frac{#1}{#2}\right)}
\def\paragraph{
	\addvspace{\medskipamount}%
	\@startsection{paragraph}{4}%
	\z@\z@{-\fontdimen2\font}%
	{\normalfont\bfseries}}
\begin{document}
	
	\title{Supersingular Curves With Small Non-integer Endomorphisms}
	\begin{namedversion}
		\author{Jonathan Love}
		\address[Jonathan Love]{Stanford University, Dept.\ of Mathematics}
		\email{jonlove@stanford.edu}
		\author{Dan Boneh}
		\address[Dan Boneh]{Stanford University, Dept.\ of Computer Science}
		\email{dabo@cs.stanford.edu}
		\thanks{Supported by NSF grant \#1701567}
		\date{February 2020}
	\end{namedversion}
	
	\begin{abstract}
	We introduce a special class of supersingular curves over $\FF_{p^2}$, characterized by the existence of non-integer endomorphisms of small degree. A number of properties of this set is proved. Most notably, we show that when this set partitions into subsets in such a way that curves within each subset have small-degree isogenies between them, but curves in distinct subsets have no small-degree isogenies between them. Despite this, we show that isogenies between these curves can be computed efficiently, giving a technique for computing isogenies between certain prescribed curves that cannot be reasonably connected by searching on $\ell$-isogeny graphs.
	\end{abstract}

	\maketitle

	\section{Introduction}

Given an elliptic curve $E$ over a field $F$, let $\End(E)$ denote the ring of endomorphisms of $E$ that are defined over $\overline{F}$. The curve $E$ is \textbf{supersingular} 
if $\End(E)$ is non-abelian; this can only occur if $E$ is defined over $\FF_{p^2}$ for some prime $p$~\cite[Theorem V.3.1]{silverman}. While the set of all supersingular curves can be quite complicated, in this paper we define collections of supersingular curves which are relatively easy to compute with and to classify.

\begin{defn}\label{isogdist}
	Given $M<p$, an elliptic curve $E$ over a finite field of characteristic $p$ is \textbf{$M$-small} (we also say that the $j$-invariant of $E$ is $M$-small) if there exists $\alpha\in\End(E)$ with $\deg\alpha\leq M$ such that $\alpha$ is not multiplication by an integer. The set of $M$-small $j$-invariants of \emph{supersingular} curves over $\FF_{p^2}$ is denoted $\SpM$ (with the prime $p$ being assumed from context).
\end{defn}

\noindent
An $M$-small curve may be ordinary or supersingular. This paper will focus primarily on the set of $M$-small supersingular curves, though some results will hold for any $M$-small elliptic curve. Assuming for the rest of this paper that $p\geq 5$, a few notable properties that will be discussed are as follows:

\begin{enumerate}[label=(\alph*)]

	\item\label{ptgen} The set of all $M$-small curves in characteristic $p$ can be generated by finding roots of Hilbert class polynomials for orders of discriminant $O(M)$ (Proposition~\ref{msmallphi}).
	
	\item\label{ptpartition} If $M<\frac{\sqrt{p}}{2}$, the set $\SpM$ of $M$-small supersingular curves partitions into $O(M)$ subsets, each connected by small-degree isogenies, such that there is no isogeny of degree less than $\frac{\sqrt{p}}{2M}$ between distinct subsets (Theorem~\ref{clusterthm}).
	
	\item\label{ptcomp} The endomorphism rings of $M$-small supersingular curves, and isogenies between any two of them, can heuristically be computed in time polynomial in $M$ and $\log p$ (Section~\ref{computations}).
\end{enumerate}
A number of other properties are discussed in an appendix:
\begin{enumerate}[label=(\alph*),resume]
	\item The number of $M$-small curves up to $\overline{\FF_p}$-isomorphism is $O(M^{3/2})$.

	\item When $M\ll p$, approximately half of all $M$-small curves appear to be supersingular (heuristically and experimentally).

	\item\label{ptall} When $M\geq \frac12p^{2/3}+\frac14$, every supersingular curve is $M$-small.
\end{enumerate}

Let us state point~\ref{ptpartition} more precisely. Given an elliptic curve $E$ over $\FF_{p^2}$, let $E^{(p)}$ denote its image under the $p^\text{th}$ power Frobenius map $(x,y)\mapsto (x^p,y^p)$. If $E$ is defined over $\FF_p$, then $E= E^{(p)}$; otherwise we have $E=(E^{(p)})^{(p)}$ and so this map will swap conjugate pairs of curves.\footnote{The map $E\to E^{(p)}$ on supersingular curves is called the ``mirror involution'' in~\cite{arpin}, where the relationship between conjugate pairs, along with many other structural properties of supersingular isogeny graphs, is studied in detail.} For $j\in\FF_{p^2}$, let $E_j$ be an elliptic curve over $\FF_{p^2}$ with $j$-invariant equal to $j$.

\begin{defn}
	Let $E$ and $E'$ be supersingular elliptic curves over $\FF_{p^2}$. The \textbf{distance from $E$ to $E'$}, denoted $d(E,E')$, is the minimum degree of an isogeny $E\to E'$ or $E\to E'^{(p)}$ defined over $\overline{\FF_p}$. We also define $d(j,j')=d(E_j,E_{j'})$ for supersingular $j$-invariants $j,j'\in\FF_{p^2}$.
\end{defn}

\noindent
By basic properties of isogenies (e.g.,~\cite[Chapter III]{silverman}), $\log d$ is a pseudometric on the set of supersingular curves over $\FF_{p^2}$, and it descends to a metric on the set of Galois orbits $\{E,E^{(p)}\}$.

\begin{thm}\label{clusterthm}
	Suppose $p>4M^2$, and let $\SpM$ denote the set of $M$-small supersingular curves. Then there exists a partition
	\[\SpM=\bigsqcup_D T_D\]
	of $\SpM$ into nonempty subsets, indexed by fundamental discriminants $-4M\leq D<0$ which are not congruent to a square mod $p$. This partition has the following properties:
	\begin{enumerate}[label=(\alph*)]
		\item If $j,j'$ are in distinct subsets $T_D\neq T_{D'}$, then 
		\[d(j,j')\geq \frac{\sqrt{p}}{2M}.\]
		\item If $j,j'$ are in the same subset $T_D$, then there is a chain 
		$j=j_0,j_1,\ldots,j_r=j'$ of elements of $T_D$ such that 
		\[d(j_{i-1},j_i)\leq \frac{4}{\pi}\sqrt{M}\]
		for all $i=1,\ldots,r$. We can find such a chain with $r\leq 3$, or alternatively, we can find such a chain such that for each $i=1,\ldots, r$, there exists an isogeny $E_{j_{i-1}}\to E_{j_i}$ or $E_{j_{i-1}}\to E_{j_i}^{(p)}$ with prime degree at most $\frac{4}{\pi}\sqrt{M}$.
	\end{enumerate}
\end{thm}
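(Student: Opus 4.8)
The plan is to attach to every $M$-small supersingular curve $E$ a fundamental discriminant $D(E)$ --- the fundamental part of the discriminant $\trd(\alpha)^2-4\deg\alpha$ of $\ZZ[\alpha]$, for a non-integer $\alpha\in\End(E)$ with $\deg\alpha\le M$ --- and to define $T_D$ as the set of $j$ with $D(E_j)=D$. Since $E$ is supersingular, $\ZZ[\alpha]$ is an order in an imaginary quadratic field, so its discriminant is negative with absolute value $4\deg\alpha-\trd(\alpha)^2\le 4M$, giving $-4M\le D(E)<0$; and since a supersingular curve cannot have CM by a field in which $p$ splits, $D(E)$ is not a square mod $p$. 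The main tool, used throughout, is a \emph{commutator lemma}: if $E$ is supersingular and $\alpha,\beta\in\End(E)$ do not commute, then $\deg(\alpha\beta-\beta\alpha)\ge p$. One proves this by observing that $\delta:=\alpha\beta-\beta\alpha\ne 0$ has reduced trace $0$ and is orthogonal, for the pairing $\langle x,y\rangle=\trd(x\bar y)$, to each of $1,\alpha,\beta$; that $\mathcal O_0:=\ZZ+\ZZ\alpha+\ZZ\beta+\ZZ\alpha\beta$ is an order in the maximal quaternion order $\End(E)$, hence has lattice discriminant divisible by $\operatorname{discrd}(\End(E))^2=p^2$; and that $\Lambda:=\ZZ+\ZZ\alpha+\ZZ\beta+\ZZ\delta$ has index exactly $2$ in $\mathcal O_0$ and splits as an orthogonal direct sum $\langle 1,\alpha,\beta\rangle\perp\ZZ\delta$ with $\operatorname{disc}(\langle 1,\alpha,\beta\rangle)=2\deg(\delta)=\operatorname{disc}(\ZZ\delta)$ (a direct computation, reducing to $\trd(\alpha)=\trd(\beta)=0$); comparing gives $4\deg(\delta)^2=\operatorname{disc}(\Lambda)=4\operatorname{disc}(\mathcal O_0)\ge 4p^2$. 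The invariant $D(E)$ is then well-defined: two non-integer endomorphisms of degree $\le M$ with distinct fundamental discriminants generate distinct imaginary quadratic subfields, hence do not commute, so by the lemma and $\deg(\alpha\beta-\beta\alpha)\le(\sqrt{\deg\alpha\deg\beta}+\sqrt{\deg\alpha\deg\beta})^2=4\deg\alpha\cdot\deg\beta\le 4M^2$ we get $p\le 4M^2$, contradicting $p>4M^2$. So the $T_D$ do partition $\SpM$.

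For part (a), let $\phi\colon E_j\to E_{j'}$ or $\phi\colon E_j\to E_{j'}^{(p)}$ be an isogeny realizing $n:=d(j,j')$, and let $\alpha\in\End(E_j)$ and $\gamma\in\End(E_{j'})$ be non-integer of degree $\le M$ with fundamental discriminants $D,D'$. Pulling $\gamma$ (or its Frobenius conjugate) back along $\phi$ yields $\psi:=\hat\phi\gamma\phi\in\End(E_j)$ with $\deg\psi=n^2\deg\gamma$ and $\trd(\psi)=n\trd(\gamma)$, so $\ZZ[\psi]$ has discriminant $n^2(\trd(\gamma)^2-4\deg\gamma)$, still of fundamental part $D'$. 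If $D\ne D'$ then $\alpha$ and $\psi$ do not commute, so the commutator lemma gives $p\le\deg(\alpha\psi-\psi\alpha)\le 4\deg\alpha\cdot\deg\psi\le 4M\cdot n^2M$, i.e.\ $n\ge\sqrt p/(2M)$. Equivalently, $d(j,j')<\sqrt p/(2M)$ forces $D(E_j)=D(E_{j'})$, which is (a).

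For part (b), fix $D$ and $j,j'\in T_D$. Let $\mathcal O_{f}:=\End(E_j)\cap\QQ(\sqrt D)$, an order of conductor $f$; the minimal-degree non-integer endomorphism of $E_j$ lies in this field and has degree at least $|D|f^2/4$ but at most $M$, so $|D|f^2\le 4M$, and as $D$ is fundamental, $|D|\ge 3$, whence $f\le\tfrac{2}{\sqrt 3}\sqrt M\le\tfrac{4}{\pi}\sqrt M$. By the classical theory of isogenies between curves with CM by quadratic orders (ascending isogenies, in the $p$-inert supersingular setting), there is a chain of prime-degree isogenies --- each of degree a prime divisor of $f$, hence $\le\tfrac4\pi\sqrt M$ --- from $E_j$ to a curve $E_0$ with CM by the maximal order $\mathcal O_D$, or, collapsing it, a single isogeny of degree $f$; every intermediate curve is $M$-small with fundamental discriminant $D$, so lies in $T_D$. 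Doing the same for $j'$ gives a curve $E_0'$ with CM by $\mathcal O_D$. Since $D$ is not a square mod $p$ and $|D|<p$, the prime $p$ is inert in $\QQ(\sqrt D)$; the set of supersingular curves with CM by $\mathcal O_D$ is then a union of at most two orbits of $\Cl(\mathcal O_D)$ interchanged by $E\mapsto E^{(p)}$, and $T_D$ is stable under $E\mapsto E^{(p)}$, so --- using that $d$ and the chain condition are insensitive to Frobenius conjugates --- we may write $E_0'$ (up to such a conjugate) as $[\mathfrak a]\cdot E_0$. Factoring $[\mathfrak a]$ into prime ideal classes of norm at most the Minkowski bound $\tfrac2\pi\sqrt{|D|}\le\tfrac4\pi\sqrt M$ produces a chain of prime-degree isogenies from $E_0$ to $E_0'$ inside $T_D$, while a single reduced representative of norm $\le\sqrt{|D|/3}\le\tfrac4\pi\sqrt M$ gives a length-one connection. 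Concatenating $E_j\to E_0\to E_0'\to E_{j'}$ yields a chain with $r\le 3$, and using the prime-degree refinements on each piece yields the prime-degree version. Finally, each $T_D$ is nonempty: since $p$ is inert in $\QQ(\sqrt D)$, Deuring's theorem gives that every root of the Hilbert class polynomial $H_D$ reduces mod $p$ to a supersingular $j$-invariant, which is $M$-small (as $|D|\le 4M$) and lies in $T_D$.

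The diophantine core (the commutator lemma, the bound $\deg(\mu-\nu)\le(\sqrt{\deg\mu}+\sqrt{\deg\nu})^2$, and their consequences for parts~(a) and for well-definedness) is short and self-contained. The step demanding the most care is the description of the supersingular locus with CM by $\mathcal O_D$ used in part~(b): the transitivity --- up to the Frobenius involution --- of the $\Cl(\mathcal O_D)$-action, together with the existence and $D$-compatibility of ascending isogenies in the supersingular, $p$-inert case, which must be extracted from the Deuring correspondence and optimal-embedding arithmetic for quaternion orders (or quoted from the literature on CM points on the supersingular locus).
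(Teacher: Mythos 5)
Your proof is correct in its mathematical content and broadly parallel in structure to the paper's, but the two parts are not on the same footing, so let me address them separately.

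\textbf{Part (a) and well-definedness of the partition.} Your ``commutator lemma'' --- if $\alpha,\beta\in\End(E)$ fail to commute for supersingular $E$, then $\deg(\alpha\beta-\beta\alpha)\ge p$ --- is a clean, self-contained substitute for the paper's invocation of Kaneko's theorem (Theorem~\ref{subfieldbound}). Your discriminant bookkeeping checks out: $\delta:=\alpha\beta-\beta\alpha$ has zero reduced trace and is orthogonal, for $\langle x,y\rangle=\trd(x\bar y)$, to $1,\alpha,\beta$; the unipotent change of variables $\alpha\mapsto\alpha-\tfrac12\trd(\alpha)$, $\beta\mapsto\beta-\tfrac12\trd(\beta)$ fixes both $\delta$ and the determinant of the $3\times3$ Gram matrix, after which $\operatorname{disc}\langle 1,\alpha,\beta\rangle=2(4\nrd(\alpha)\nrd(\beta)-\trd(\alpha\beta)^2)=2\nrd(\delta)$; and the index-$2$ inclusion $\Lambda\subset\ZZ\langle 1,\alpha,\beta,\alpha\beta\rangle\subseteq\End(E)$ forces $4\deg(\delta)^2\ge 4p^2$. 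Combined with the elementary $\deg(\mu-\nu)\le(\sqrt{\deg\mu}+\sqrt{\deg\nu})^2$ (valid since $\deg=\nrd$ is a positive-definite quadratic form) this gives $p\le 4\deg\alpha\deg\beta$, and pulling back $\gamma$ along $\phi$ yields $\psi=\hat\phi\gamma\phi$ with $\disc\ZZ[\psi]=n^2\disc\ZZ[\gamma]$, hence the same fundamental discriminant, which finishes (a) and the disjointness. This is genuinely different from the paper's route (which cites Kaneko as a black box and then compares conductors via $d\alpha'\in\QuatO$), though as the paper notes Kaneko's own proof proceeds by much the same discriminant-of-a-suborder computation. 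Your version has the virtue of being elementary and dispensing with the optimal-embedding apparatus in this part.

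\textbf{Part (b).} Here you describe essentially the same two-phase structure the paper uses --- ``ascending/vertical'' steps shrinking the conductor of $\End(E_j)\cap\QQ(\sqrt D)$ down to $1$, then a ``horizontal'' step inside the class group of $\quadO_D$ bounded by Minkowski --- but you state the two load-bearing ingredients as known facts rather than proving them. The claim that ascending $\ell$-isogenies exist in the supersingular, $p$-inert setting (i.e.\ from an order with $\ZZ[\ell\beta]$ optimally embedded to one with $\ZZ[\beta]$ optimally embedded, at distance exactly $\ell$) is exactly the paper's Lemma~\ref{linkdown}, proved by a local computation with the Bruhat--Tits tree; it is much less textbook-standard than the ordinary isogeny-volcano analogue, and the phrase ``classical theory of isogenies between curves with CM by quadratic orders'' is not a citation that covers it. Likewise, the claim that the supersingular curves admitting an optimal embedding of $\quadO_D$ form at most two $\Cl(\quadO_D)$-orbits swapped by $E\mapsto E^{(p)}$ is precisely the Chevalley--Hasse--Noether Theorem~\ref{chevhassnoeth} together with the Deuring correspondence, which is the content of the paper's Lemma~\ref{linkacross}. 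You flag these gaps honestly in your final paragraph, but as written they are gaps: the proof of (b) is not complete without them, and they constitute the technical core of the paper's Section~\ref{embeddingorders}. Your constant bookkeeping ($f\le\tfrac{2}{\sqrt3}\sqrt M\le\tfrac4\pi\sqrt M$, reduced-form representative of norm $\le\sqrt{|D|/3}$, Minkowski bound $\tfrac2\pi\sqrt{|D|}\le\tfrac4\pi\sqrt M$) is correct and matches or slightly sharpens the paper's. If you supply proofs of the two quaternionic lemmas --- or cite Eichler's Satz~7 and work out the local Bruhat--Tits argument --- your write-up becomes a complete alternative with a noticeably more elementary part~(a).
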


\begin{figure}
    \centering
    \includegraphics[width=0.75\textwidth]{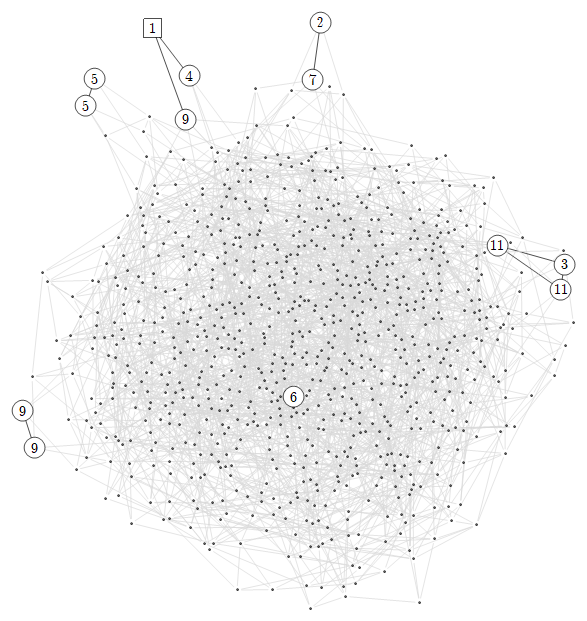}
    \caption{A graph which illustrates Theorem~\ref{clusterthm}. The vertices are supersingular elliptic curves in characteristic $p=20011$, with conjugate pairs $\{E,E^{(p)}\}$ identified. The $12$-small curves are highlighted, and labelled with the smallest degree of a non-integer endomorphism. The square vertex is the curve $y^2=x^3+x$ with $j$-invariant $1728$. 
    Two curves $E,E'$ are connected by an edge if there is an isogeny $E\to E'$ of degree $2$ or $3$ (the primes less than $\frac{4}{\pi}\sqrt{12}$). The connected components of the $M$-small subgraph correspond to the sets $T_D$ for $D=-4,-7,-11,-24,-35,-20$ (starting from the square and proceeding clockwise).
    Data computed using Magma~\cite{magma}, plotted using Mathematica~\cite{mathematica}.}
    \label{fig:SM}
\end{figure}

See Figure \ref{fig:SM} for an illustration of Theorem~\ref{clusterthm}. Intuitively, this is saying that the set of supersingular curves has ``isogeny valleys''\footnote{Perhaps they should be called ``isogeny peaks'' because we shall see in Section~\ref{embeddingorders} that they are very closely related to the volcanic ``craters'' of ordinary isogeny graphs, as discussed in~\cite{suth_isogvolc}. However, it feels more natural to associate $M$-small curves with valleys, both so that we can think of endomorphism degree as a measure of height, and because they are in practice easier to reach, as discussed in Section~\ref{generatingmsmall}.} indexed by certain fundamental discriminants; each valley consists of a number of $M$-small curves that are all linked together by low-degree isogenies, but are very far away from the $M$-small curves in other isogeny valleys. The sizes and shapes of these valleys are discussed in Appendix~\ref{appendixcount}.

The fact that the sets $T_D$ are connected by small-degree isogenies (as described in Theorem~\ref{clusterthm}(b)) will not be evident in the $\ell$-isogeny graph for any individual prime $\ell$. In fact, if $\ell$ is any prime such that one of the sets $T_D$ contains an $(M/\ell^2)$-small curve, then there are two curves in $T_D$ such that the degree of any isogeny between them is either divisible by $\ell$ or greater than $p\ell/(4M)$ (Corollary~\ref{isogwithoutl}). So if we exclude any sufficiently small prime, Theorem~\ref{clusterthm}(b) does not hold.

\paragraph{Motivation.}
We say that a supersingular elliptic curve $E$ over $\FF_{p^2}$ 
is ``hard''
if it is computationally infeasible to compute
its endomorphism ring. 
A number of applications in cryptography (e.g.,~\cite{VDF}) need
an explicit hard curve $E$
where no one, including the party who generated the curve, can compute its endomorphism ring.
Currently, there is no known method to generate such a curve.

To illustrate the problem, suppose $p \equiv 2 \bmod 3$ 
and let $E_0$ be the supersingular curve with $j$-invariant~$0$.
Let $\ell$ be a small prime.
One can generate a large number of supersingular curves by taking a random walk along the graph of degree $\ell$ isogenies, starting at $E_0$.
However, every curve $E$ generated this way will have a known endomorphism ring: the endomorphism ring of $E$ can be computed using the isogeny path from $E_0$ to $E$. 

Point~\ref{ptgen} raises the possibility of using 
the set of $M$-small supersingular elliptic curves, for some polynomial size~$M$, 
as a candidate set of explicit hard curves. 
If $E$ is a typical $M$-small curve, then point~\ref{ptpartition} tells us that $E$ could not reasonably be found by searching from $E_0$ on $\ell$-isogeny graphs for any small primes $\ell$. A priori, this might suggest that it would be difficult to compute the isogeny path from $E_0$ to $E$,
and therefore there is hope that the endomorphism ring of $E$ will remain unknown. 
However, point~\ref{ptcomp} demonstrates that this is likely not the case. 

This suggests that a hard curve will not be $M$-small; by the classification results of Section \ref{modroots}, this rules out roots of low-degree Hilbert class polynomials as reasonable candidates for hard curves.
It remains an open problem to construct a single explicit hard supersingular curve.

\paragraph{Organization.}
The content of this paper is as follows. In Section~\ref{generatingmsmall}, we note that several known examples of supersingular curves are in fact $M$-small for very small values of $M$, and show that an algorithm due to Br\"{o}ker used to generate supersingular curves will typically output $M$-small curves. We will then see how to generate all such curves by generalizing Br\"{o}ker's algorithm.

Sections~\ref{quatbackground}--\ref{msmallapplication} are devoted to the proof of Theorem~\ref{clusterthm}. 
This proof depends on the fact that the endomorphism ring of a supersingular curve is a maximal order in a quaternion algebra,\footnote{This viewpoint lays the foundation for many prior papers on supersingular isogenies; see for instance~\cite{quatisogpath},~\cite{hardeasy}, and~\cite{isoggraphs_endorings}.} so a brief review of some necessary background is given in Section~\ref{quatbackground}. 
In Section~\ref{quatproofs} we lay the groundwork for a proof of Theorem~\ref{clusterthm}, and prove an analogue of Theorem~\ref{clusterthm}(a) for quaternion algebras. We discuss the theory of optimal embeddings of quadratic orders in Section~\ref{embeddingorders}, which enables us to prove a quaternion algebra analogue of Theorem~\ref{clusterthm}(b).  
These two results are translated into facts about supersingular curves in Section~\ref{msmallapplication}, where we finish the proof of Theorem~\ref{clusterthm}.

In Section~\ref{computations}, we discuss an algorithm that finds an isogeny between any two $M$-small supersingular curves. Appendix~\ref{appendixalgs} includes more detail on these algorithms, and give an example of its performance for $p\approx 2^{256}$ and $M=100$. We include bounds on the sizes of various sets of $M$-small curves in Appendix~\ref{appendixcount}. Appendix~\ref{diffprimeisog} depends on the results of Section~\ref{embeddingorders}, and shows that certain isogenies of degree $\ell$ cannot be replaced by short isogenies of degree relatively prime to $\ell$.

\paragraph{Acknowledgments.}
We would like to thank John Voight for fruitful discussion without which we would not have found the algorithms in Section \ref{computations}, and Akshay Venkatesh for pointing us towards the key ideas in Section~\ref{embeddingorders}. Thanks also to the anonymous reviewers for many improvements, including local proofs for Lemma~\ref{disteq} and Lemma~\ref{linkdown}, as well as pointing out a strengthening of Proposition~\ref{largeinters} that leads to a better bound in Theorem~\ref{clusterthm}.
	
\section{Generating $M$-small curves}\label{generatingmsmall}

Most well-known examples of supersingular curves are all $M$-small for relatively small values of $M$. For instance, supersingular curves with a non-trivial automorphism are $1$-small. This includes the curve $y^2=x^3+x$ with $j$-invariant $1728$ when $p\equiv 3\pmod 4$, and the curve $y^2=x^3+1$ with $j$-invariant $0$ when $p\equiv 2\pmod 3$.

More generally, Br\"{o}ker in~\cite{gencurves} proposes a general algorithm for producing a supersingular curve over an arbitrary finite field. We will discuss the algorithm here, and then see in Section \ref{modroots} how to generalize his approach to generate all $M$-small curves.

Given an imaginary quadratic field $K$, a \textbf{quadratic order} $\quadO$ in $K$ is a subring of $K$ such that the field of fractions of $\quadO$ is equal to $K$. If $\quadO_K$ is the ring of integers of $K$, the only quadratic orders in $K$ are of the form $\quadO_{K,f}:=\ZZ+f\quadO_K$ for some positive integer $f$, called the \textbf{conductor} of the quadratic order. If $D$ is the discriminant of $K$, then $d:=f^2D$ is the discriminant of $\quadO_{K,f}$ (throughout this paper, we will use $D$ to refer to fundamental discriminants, and $d$ to refer to discriminants of arbitrary quadratic orders). Further, any $d\equiv 0\text{ or }1\pmod 4$ can be written uniquely as $d=f^2D$ for $f>1$ and a fundamental discriminant $D$, so that quadratic orders are uniquely determined by their discriminant. We have $\quadO_{K,f}\subseteq \quadO_{K,g}$ if and only if $g\mid f$.

\begin{defn}
	Let $\quadO$ be a quadratic order. The \textbf{Hilbert class polynomial} $H_\quadO(x)\in\ZZ[x]$ is the monic irreducible polynomial characterized by the following property:\footnote{See for example~\cite[Proposition 13.2]{cox} for proof that such a polynomial exists.} for $j\in\CC$, $H_\quadO(j)=0$ if and only if $j$ is the $j$-invariant of an elliptic curve $\widetilde{E}$ over $\CC$ with $\End(\widetilde{E})\cong \quadO$.
\end{defn}

Br\"{o}ker's algorithm~\cite[Algorithm 2.4]{gencurves} proceeds as follows. To construct a supersingular curve over $\FF_p$ with $p\equiv 1\pmod 4$,\footnote{For $p=2$, the curve $y^2+y=x^3$ is supersingular, and for $p\equiv 3\pmod 4$ the curve $y^2=x^3+x$ is supersingular.} one first finds a prime $q\equiv 3\pmod 4$ with Legendre symbol $\legendre{-q}{p}=-1$. One can typically find very small values of $q$ satisfying these constraints. The algorithm proceeds by computing the Hilbert class polynomial $H_{\quadO_K}(x)$ for $K=\QQ(\sqrt{-q})$, and finding a root of $H_{\quadO_K}(x)\pmod p$ in $\FF_p$. The condition $\legendre{-q}{p}=-1$ then guarantees that this root is the $j$-invariant of a supersingular curve (Proposition \ref{msmallphi}). This algorithm generates $M$-small curves for a reasonably small value of $M$, as the following proposition shows.

\begin{prop}\label{brokerbound}
    The supersingular curves found by Algorithm 2.4 of~\cite{gencurves} are $\left(\frac{q+1}{4}\right)$-small. Assuming GRH, they are $M$-small for $M=O(\log^2 p)$.
\end{prop}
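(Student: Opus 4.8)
The plan is to establish the two claims separately, both resting on the same observation: Br\"oker's algorithm produces a curve $E$ whose endomorphism ring contains (a maximal order containing) the quadratic order $\quadO_K$ for $K=\QQ(\sqrt{-q})$, and in particular contains an element $\alpha$ with $\alpha^2 = -q$ (the image of $\sqrt{-q}$). Since the reduced norm on the quaternion algebra restricts to the field norm on $K$, this $\alpha$ is an endomorphism of degree $\nrd(\alpha) = \N_{K/\QQ}(\sqrt{-q}) = q$, and it is visibly not multiplication by an integer (as $\QQ(\alpha)=K$ is imaginary quadratic, not $\QQ$). That already shows $E$ is $q$-small. To sharpen $q$ to $\frac{q+1}{4}$: since $q\equiv 3\pmod 4$, the ring of integers $\quadO_K$ is $\ZZ[\frac{1+\sqrt{-q}}{2}]$, so $\End(E)$ also contains $\beta := \frac{1+\sqrt{-q}}{2}$, which is again not an integer, and $\deg\beta = \nrd(\beta) = \beta\bar\beta = \frac{1 + q}{4}$. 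This gives the first sentence of the proposition.

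For the second sentence, I would invoke the GRH-conditional bound on the least prime in an arithmetic progression together with quadratic reciprocity. Br\"oker needs a prime $q$ with $q\equiv 3\pmod 4$ and $\legendre{-q}{p}=-1$; when $p\equiv 1\pmod 4$ one has $\legendre{-q}{p} = \legendre{-1}{p}\legendre{q}{p} = \legendre{q}{p} = \legendre{p}{q}$ (using $q\equiv 3\pmod 4$ and reciprocity, with sign bookkeeping), so the constraint on $q$ becomes a collection of congruence conditions modulo $4p$: namely $q\equiv 3\pmod 4$ and $q$ lies in one of the $\frac{q-1}{2}$ nonresidue classes mod $p$. By the effective Chebotarev/Linnik-type bound under GRH (e.g., the conditional bound that the least prime in a residue class mod $n$ is $O(\log^2 n)$ — this is the standard GRH consequence used in Br\"oker's paper), such a $q$ exists with $q = O(\log^2 (4p)) = O(\log^2 p)$. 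Then $M = \frac{q+1}{4} = O(\log^2 p)$, completing the proof.

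The main obstacle — or rather the main point requiring care — is the first step: justifying that the reduction mod $p$ of a root of $H_{\quadO_K}$ actually has an endomorphism ring containing $\quadO_K$. This is the content of (the supersingular case of) Deuring's reduction theorem: a curve $\widetilde E / \CC$ with CM by $\quadO_K$ reduces, at a prime where $\legendre{D_K}{p} \ne 1$ (equivalently here $\legendre{-q}{p}=-1$), to a supersingular curve, and the CM order $\quadO_K$ embeds into the reduced endomorphism ring. I expect the paper to package this as part of Proposition~\ref{msmallphi} (referenced in the text just above), so in the write-up I would simply cite that proposition for the embedding $\quadO_K \hookrightarrow \End(E)$ and then run the norm computation above. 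The GRH step is genuinely routine once the reciprocity rewriting is done; the only subtlety is confirming that the number of admissible residue classes mod $4p$ is positive and that the conditional least-prime bound applies uniformly, which it does.
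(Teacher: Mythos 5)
Your proof is correct and takes essentially the same route as the paper: both exhibit $\frac{1+\sqrt{-q}}{2}$ as a non-integer endomorphism of norm $\frac{q+1}{4}$ after noting that the degree-preserving injection $\End(\widetilde{E})\hookrightarrow\End(E)$ transports $\quadO_K$ into $\End(E)$, and both appeal to GRH for $q=O(\log^2 p)$. The only cosmetic difference is that the paper cites Br\"oker's Lemma~2.5 directly for the GRH bound rather than unwinding the quadratic-reciprocity and least-prime-in-AP argument as you do, and it cites Silverman~II.4.4 directly for the degree-preserving reduction rather than routing through Proposition~\ref{msmallphi}.
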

\begin{proof} 
    The output of the algorithm is a curve $E$ over $\FF_p$ with the following property: there exists a curve $\widetilde{E}$ over the Hilbert class field of $\QQ(\sqrt{-q})$ such that $\End(\widetilde{E})\cong\quadO_K$ and $E$ is the reduction of $\widetilde{E}$ modulo some prime of $\quadO_L$. In particular, $\frac{1+\sqrt{-q}}{2}\in\quadO_K$ is a non-integer endomorphism of $\widetilde{E}$ with norm $\frac{q+1}{4}$. The reduction map $\End(\widetilde{E})\to\End(E)$ is a degree-preserving injection~\cite[Proposition II.4.4]{silverman_adv}, so $\End(E)$ also contains a non-integer endomorphism of norm $\frac{q+1}{4}$, proving that $E$ is $\left(\frac{q+1}{4}\right)$-small.
    
    As discussed in the proof of Lemma 2.5 in~\cite{gencurves}, under GRH we can find $q=O(\log^2 p)$ with the desired properties.
\end{proof}

\subsection{Classification of $M$-small curves}\label{modroots}

A suitable generalization of Br\"{o}ker's algorithm~\cite{gencurves} can be used to generate the set of all $M$-small curves. Instead of only considering roots of the Hilbert class polynomial $H_{\quadO_K}(x)$, we will consider the set of roots of $H_\quadO(x)\pmod p$ for all quadratic orders with discriminant $-4M\leq \disc \quadO<0$. This set is the set of $j$-invariants of $M$-small curves (Proposition \ref{msmallphi}), and we can determine whether a $j$-invariant is supersingular or ordinary by a Legendre symbol calculation as in~\cite{gencurves}.

Sutherland gives an algorithm for computing $H_\quadO(x)\pmod p$ in time $O(|\disc \quadO|^{1+\varepsilon})$ \cite[Theorem 1]{suth_classpoly}. Computing $H_\quadO(x)$ for all quadratic orders of discriminant $-4M\leq d<0$ can therefore be done in time $O(M^{2+\varepsilon})$.

\begin{prop}\label{msmallphi}
	Let $3\leq M < p$, let $E$ be an elliptic curve over a finite field of characteristic $p$, and let $j$ be the $j$-invariant of $E$. Then $E$ is $M$-small if and only if $H_\quadO(j)=0\pmod p$ for some quadratic order $\quadO$ with discriminant $-4M\leq\disc \quadO<0$. Further, $E$ is supersingular if and only if $p$ does not split in the field of fractions of $\quadO$.
\end{prop}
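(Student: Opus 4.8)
The plan is to reduce the claim to a statement about embeddings of quadratic orders into $\End(E)$, and then to cross between characteristic $p$ and characteristic $0$ using Deuring's lifting theorem and the reduction theory of CM $j$-invariants. A preliminary remark used repeatedly: since $p\ge 5$ and $M<p$, every quadratic order $\quadO$ of discriminant $-4M\le\disc\quadO<0$ has conductor prime to $p$, for otherwise $p^2\mid\disc\quadO$ and hence $p^2\le 4M<4p$.

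\emph{Step 1: $M$-smallness as an embedding condition.} I would first show that $E$ is $M$-small if and only if some quadratic order $\quadO$ with $-4M\le\disc\quadO<0$ admits a unital embedding $\quadO\hookrightarrow\End(E)$. In one direction, if $\alpha\in\End(E)\setminus\ZZ$ has $\deg\alpha\le M$, then --- since $\End(E)\otimes\QQ$ is $\QQ$, an imaginary quadratic field, or a quaternion algebra, and $\alpha\notin\QQ$ --- the subring $\ZZ[\alpha]$ is a quadratic order in the imaginary quadratic field $\QQ(\alpha)$, of discriminant $\trd(\alpha)^2-4\deg\alpha$, which lies in $[-4M,0)$ because $\QQ(\alpha)$ is imaginary and $\deg\alpha\le M$. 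In the other direction, a quadratic order of discriminant $d$ with $-4M\le d<0$ is generated over $\ZZ$ by $\sqrt{d/4}$ or by $\tfrac{1+\sqrt d}{2}$ according as $d\equiv0$ or $1\pmod 4$, and this generator is a non-integer element of norm $|d|/4$ or $(|d|+1)/4$ respectively --- an integer which is at most $M$.

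\emph{Step 2: crossing to roots of Hilbert class polynomials.} Given a unital embedding $\quadO=\ZZ[\alpha]\hookrightarrow\End(E)$ with $-4M\le\disc\quadO<0$, Deuring's lifting theorem produces an elliptic curve $\widetilde E$ over a number field, an endomorphism $\widetilde\alpha$ reducing to $\alpha$, and a prime $\pp\mid p$ at which $\widetilde E$ has good reduction with $\widetilde E\bmod\pp\cong E$. Since $\widetilde\alpha$ has the same minimal polynomial as $\alpha$, the ring $\End(\widetilde E)$ is an order $\quadO''$ containing $\ZZ[\widetilde\alpha]\cong\quadO$, so $-4M\le\disc\quadO''<0$ as well, and $j=j(\widetilde E)\bmod\pp$ is a root of $H_{\quadO''}\bmod p$. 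Conversely, if $H_\quadO(j)\equiv 0\pmod p$ with $-4M\le\disc\quadO<0$, I would use that over the ring class field of $\quadO$ one has $H_\quadO(x)=\prod_i\bigl(x-j(\widetilde E_i)\bigr)$ with each $\widetilde E_i$ having CM by $\quadO$; reducing this factorization modulo a prime above $p$ (enlarging the field so the $\widetilde E_i$ acquire good reduction) shows $j$ is the reduction of some $j(\widetilde E_{i_0})$, whence $E$ is a reduction of $\widetilde E_{i_0}$, and the degree-preserving injectivity of reduction of endomorphisms \cite[Proposition~II.4.4]{silverman_adv} gives $\quadO\cong\End(\widetilde E_{i_0})\hookrightarrow\End(E)$. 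Combined with Step 1 this proves the first assertion; the passage from $\quadO$ to the possibly larger order $\quadO''$ above is harmless, as only the existence of \emph{some} order of discriminant $\ge-4M$ is asserted.

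\emph{Step 3: the supersingular dichotomy, and the main obstacle.} Assuming $H_\quadO(j)\equiv 0\pmod p$, Step 2 gives an embedding $K:=\operatorname{Frac}\quadO\hookrightarrow\End(E)\otimes\QQ$. If $p$ does not split in $K$, then $\End(E)\otimes\QQ$ is not $\QQ$ (it contains $K$) and not an imaginary quadratic field (it would equal $K$, forcing $E$ to be an elliptic curve over a finite field with CM by $K$, hence $p$ split in $K$ --- a contradiction), so it is a quaternion algebra and $E$ is supersingular. Conversely, if $E$ is supersingular then $\End(E)$ lies in the quaternion algebra $B_{p,\infty}$ ramified exactly at $p$ and $\infty$, into which an imaginary quadratic field embeds if and only if $p$ is inert or ramified in it, so $p$ does not split in $K$ (equivalently, this is Deuring's theorem on reductions of CM curves). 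The only substantive input in the whole argument is Deuring's lifting theorem together with the reduction theory of CM $j$-invariants --- these are what convert the purely characteristic-$p$ datum of an endomorphism of $E$ into a statement about roots of $H_\quadO$ mod $p$; the remaining pieces (discriminant arithmetic of quadratic orders, injectivity and degree-preservation of reduction of endomorphisms, and the embedding criterion for $B_{p,\infty}$, equivalently the fact that ordinary curves over finite fields have split CM) are standard.
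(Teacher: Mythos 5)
Your proof is correct and follows essentially the same route as the paper's: Deuring lifting in the forward direction, reduction of CM $j$-invariants together with degree-preserving injectivity of reduction of endomorphisms in the reverse direction, the same discriminant arithmetic to get $-4M\leq\disc\quadO<0$, and Deuring's criterion for the supersingular dichotomy. The only differences are presentational --- you interpose an explicit embedding criterion (Step~1) where the paper works directly with a norm-bounded element of the quadratic order, and in Step~3 you unpack Deuring's theorem rather than citing it outright.
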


\begin{proof}
    First suppose $E$ is $M$-small, and take $\alpha\in\End(E)-\ZZ$ for which $\deg\alpha\leq M$. By Deuring's Lifting Theorem~\cite[Theorem 13.14]{lang}, there is an elliptic curve $\widetilde{E}$ defined over a number field $L$, an endomorphism $\widetilde{\alpha}$ of $\widetilde{E}$, and a prime $\mathfrak p$ of $L$, such that $\widetilde{E}$ has good reduction at $\mathfrak p$, the reduction of $\widetilde{E}$ at $\mathfrak p$ is isomorphic over $\overline{\FF_p}$ to $E$, and that the endomorphism on $E$ induced by $\widetilde{\alpha}$ is equal to $\alpha$. Since the map $\End(\widetilde{E})\to\End(E)$ induced by reduction preserves degree~\cite[Proposition II.4.4]{silverman_adv}, $\widetilde{\alpha}\in \End(\widetilde{E})-\ZZ$ has degree at most $M$. For some quadratic order $\quadO$ in an imaginary quadratic field $K$, we will have $\End(\widetilde{E})\cong\quadO$ ~\cite[Corollary III.9.4]{silverman}. Letting $d=\disc\quadO$, we will have $\widetilde{\alpha}=\frac{a+b\sqrt{d}}{2}$ for some $a,b\in\ZZ$ with $b\neq 0$. Then
    \[\frac{|d|}{4}=\N_{K/\QQ}\left(\frac{\sqrt{d}}{2}\right)\leq\N_{\quadO/\ZZ}\left(\frac{a+b\sqrt{d}}{2}\right)=\deg\widetilde{\alpha}\leq M,\]
    implying $-4M\leq \disc \quadO<0$. By definition of the Hilbert class polynomial, this implies that the $j$-invariant $\widetilde{j}\in L$ of $\widetilde{E}$ is a root of the Hilbert class polynomial $H_\quadO(x)\in\ZZ[x]$. Reducing modulo $\mathfrak p$, we see that $j$ is a root of $H_\quadO(x)\pmod p$.
    
    Conversely, suppose $H_\quadO(j)=0\pmod p$ for some quadratic order $\quadO$ with discriminant $-4M\leq\disc \quadO<0$. Let $L/\QQ$ be the splitting field of $H_\quadO(x)$, and let $\mathfrak p$ be a prime over $p$ in $L$. 
    Then by considering the reductions mod $\mathfrak p$ of the linear factors of $H_\quadO(x)$, we can conclude that $j$ is the reduction mod $\mathfrak p$ of some $\widetilde{j}\in L$ with $H_\quadO(\widetilde{j})=0$. 
    If $\widetilde{E}$ is an elliptic curve over $L$ with $j$-invariant $\widetilde{j}$, then $\End(\widetilde{E})\cong\quadO$, and its reduction modulo $\mathfrak p$ is isomorphic over $\overline{\FF_p}$ to $E$. 
    If $d=\disc\quadO$ is congruent to $0\pmod 4$, then the element $\widetilde{\alpha}:=\frac{\sqrt{d}}{2}\in\quadO$ satisfies $\N_{\quadO/\ZZ}(\widetilde{\alpha})=\frac{|d|}{4}\leq M$. 
    If $d\equiv 1\pmod 4$, then we have $-4M+1\leq d$, and the element $\widetilde{\alpha}:=\frac{1+\sqrt{d}}{2}\in\quadO$ satisfies $\N_{\quadO/\ZZ}(\widetilde{\alpha})=\frac{|d|+1}{4}\leq M$. 
    Since the map $\End(\widetilde{E})\to\End(E)$ induced by reduction is a degree-preserving injection~\cite[Proposition II.4.4]{silverman_adv}, the reduction of $\widetilde{\alpha}$ in either case gives $\alpha\in \End(E)-\ZZ$ with $\deg\alpha\leq M$, so that $E$ is $M$-small.
    
    The fact that $E$ is supersingular if and only if $p$ does not split in the field of fractions of $\quadO$ is a theorem of Deuring~\cite[Theorem 13.12]{lang}.
\end{proof}
	
\section{Maximal Orders of Quaternion Algebras}\label{quatbackground}

In order to prove further results about $M$-small curves which are supersingular, we will need to review the theory of quaternion algebras. Unless otherwise cited, all the material in this section can be found in~\cite{voight_book}.

\subsection{Quaternion Algebras and Subfields}

There is a quaternion algebra $B$ over $\QQ$, unique up to isomorphism, that ramifies exactly at $p$ and $\infty$. For $p\neq 2$, we can take 
	\[\QQ\langle i,j,k\rangle:=\{w+xi+yj+zk:i^2=-q, j^2=-p, ij=-ji=k\}\]
for an appropriate integer $q$ depending on $p\pmod 8$ (for $p\equiv 1\pmod 8$, this agrees with the value of $q$ ~\cite[Proposition 5.1]{pizer}.
	
{\flushleft Given $\alpha=w+xi+yj+zk\in B$, we define:}
\begin{itemize}
	\item its \textbf{conjugate}, $\overline{\alpha}:=w-ix-jy-kz$. This satisfies the property that $\overline{\overline{\alpha}}=\alpha$, $\overline{\alpha+\beta}=\overline{\alpha}+\overline{\beta}$, and $\overline{\alpha\beta}=\overline{\beta}\overline{\alpha}$ for all $\alpha,\beta\in B$.
	\item its \textbf{reduced norm}, $\nrd(\alpha):=\alpha\overline{\alpha}=w^2+qx^2+py^2+qpz^2$.
	\item its \textbf{reduced trace}, $\trd(\alpha):=\alpha+\overline{\alpha}=2w$.
\end{itemize}
From these definitions, we see that any $\alpha\in B$ is the root of a polynomial
\[x^2-\trd(\alpha)x+\nrd(\alpha)\]
with rational coefficients; if $\alpha\notin\QQ$ this is the \textbf{minimal polynomial} of $\alpha$. Noting that $\trd(\alpha)^2-4\nrd(\alpha)<0$, any $\alpha\notin\QQ$ generates an imaginary quadratic subfield $\QQ(\alpha)\subseteq B$.  The following result (a consequence of the Skolem-Noether Theorem) tells us exactly when two elements of $B$ have the same minimal polynomial.

\begin{thm}[{\cite[Corollary 7.7.3]{voight_book}}]\label{skolnoeth}
	Let $\alpha,\beta\in B-\QQ$. Then $\alpha$ and $\beta$ satisfy the same minimal polynomial if and only if there exists $\gamma\in B^\times$ such that $\gamma^{-1}\alpha\gamma=\beta$.
\end{thm}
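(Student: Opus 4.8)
The plan is to prove the two implications separately; the forward direction is a one-line computation and all the content is in the converse. For the easy direction: if $\beta=\gamma^{-1}\alpha\gamma$ with $\gamma\in B^\times$, then conjugation by $\gamma$ is a ring automorphism of $B$ fixing the centre $\QQ$ pointwise, so $g(\beta)=\gamma^{-1}g(\alpha)\gamma$ for every $g\in\QQ[x]$; hence $\alpha$ and $\beta$ are annihilated by exactly the same rational polynomials and so share a minimal polynomial.

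For the converse, suppose $\alpha,\beta$ have common minimal polynomial $f$. Since $\alpha\notin\QQ$, $f$ is an irreducible quadratic, so $K:=\QQ(\alpha)$ is an imaginary quadratic field, and $f(\beta)=0$ yields a $\QQ$-algebra isomorphism $\phi\colon K\xrightarrow{\sim}\QQ(\beta)\subseteq B$ with $\phi(\alpha)=\beta$. I would then make $B$ into a module over $K\otimes_\QQ K$ by letting the first tensor factor act by left multiplication (via $K=\QQ(\alpha)\subseteq B$) and the second by right multiplication (via $\phi$); the two actions commute by associativity in $B$. Since $K/\QQ$ is a separable quadratic extension, $K\otimes_\QQ K\cong K\times K$; writing $e_1,e_2$ for the corresponding idempotents, $B=e_1B\oplus e_2B$, and tracing through the isomorphism shows that $\alpha x=x\beta$ for all $x\in e_1B$ while $\bar\alpha x=x\beta$ for all $x\in e_2B$ (where $\bar\alpha=\trd(\alpha)-\alpha$ is the quaternion conjugate, i.e. the other root of $f$). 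If $e_1B=0$ then $\bar\alpha x=x\beta$ for all $x\in B$; putting $x=1$ gives $\beta=\bar\alpha$, whence $\bar\alpha$ is central, so $\bar\alpha\in\QQ$ and $\alpha\in\QQ$ --- a contradiction. Hence $e_1B\neq 0$: choose any nonzero $\gamma\in e_1B$, so $\alpha\gamma=\gamma\beta$. Finally $B$ is a division algebra --- its reduced norm form $w^2+qx^2+py^2+qpz^2$ is positive definite since $p,q>0$ --- so $\gamma\in B^\times$ and $\gamma^{-1}\alpha\gamma=\beta$, as required.

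This argument is just the Skolem--Noether theorem applied to the two embeddings of $K$ into $B$, and one could instead cite that theorem directly; an equivalent route is to extend scalars to $\overline{\QQ}$ (where $B$ becomes $M_2(\overline{\QQ})$) and note that $\gamma\mapsto\alpha\gamma-\gamma\beta$ is a Sylvester operator whose eigenvalues are the differences of the two common eigenvalues of $\alpha$ and $\beta$, so its kernel is two-dimensional and must meet $B$ nontrivially. The only step demanding care is the bookkeeping in the identification $K\otimes_\QQ K\cong K\times K$ and the commutation relations it forces on $e_1B$ and $e_2B$; once those are pinned down, the remaining points --- that the summand $e_1B$ is nonzero, and that a nonzero element of $B$ is automatically a unit (this last point is exactly where ramification at $\infty$ enters) --- are immediate, and I foresee no genuine obstacle beyond this.
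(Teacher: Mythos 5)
Your argument is correct, and it is worth noting that the paper itself offers no proof here: it simply cites Corollary~7.7.3 of Voight's book. You have instead given a complete, self-contained proof of the quaternionic Skolem--Noether statement from first principles, which is a genuine addition rather than merely a reproduction.

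Both halves of your argument hold up. The forward direction is the trivial one-liner you describe. For the converse, the bookkeeping in $K\otimes_\QQ K\cong K\times K$ is exactly right: writing $f(x)=(x-\alpha)(x-\bar\alpha)$ over $K$, the idempotents $e_1,e_2$ pick out the two $K$-algebra homomorphisms $K[x]/(f)\to K$ sending $x\mapsto\alpha$ and $x\mapsto\bar\alpha$; under your bimodule structure, $x$ acts as right multiplication by $\beta$ and scalars act as left multiplication by $\alpha$, so $e_1B$ is exactly the subspace on which $\alpha\gamma=\gamma\beta$ and $e_2B$ the subspace on which $\bar\alpha\gamma=\gamma\beta$. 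Your non-vanishing argument for $e_1B$ is clean: if $e_1B=0$ then $\bar\alpha$ is central, hence rational, hence $\alpha\in\QQ$, a contradiction. And the observation that any nonzero $\gamma$ is automatically a unit because $B$ is ramified at $\infty$ (so the reduced norm form is anisotropic over $\RR$, hence over $\QQ$) is exactly the role that the ramification hypothesis plays --- the statement is genuinely false for $B\cong M_2(\QQ)$ without it, where one would additionally need $\gamma$ invertible. Your parenthetical about the Sylvester operator $\gamma\mapsto\alpha\gamma-\gamma\beta$ after base change to $\overline{\QQ}$ is the same argument in linear-algebra clothing: the kernel is the $0$-eigenspace, which is two-dimensional over $\overline{\QQ}$ precisely because $\alpha$ and $\beta$ share both eigenvalues, and it descends to a nonzero subspace of $B$ over $\QQ$. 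Either route is a correct and reasonably economical proof of the cited result.
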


\noindent
In particular, given any two isomorphic quadratic subfields of $B$, applying this theorem to the generators shows that there is an automorphism of $B$ that takes one subfield onto the other. An imaginary quadratic field $K$ embeds into $B$ if and only if $p$ does not split in $K$~\cite[Proposition 14.6.7]{voight_book}, which is equivalent to requiring that the Legendre symbol $\left(\frac{D}{p}\right)$ is not equal to $1$, where $D$ is the discriminant of $K$.

\subsection{Ideals and Orders}

An \textbf{ideal} $I\subseteq B$ is a subgroup under addition which is generated by a basis of $B$ considered as a vector space over $\QQ$. An \textbf{order} $\QuatO\subseteq B$ is an ideal which contains $1$ and is closed under multiplication (and is hence a subring of $B$). An element $\alpha\in B$ with $\trd(\alpha),\nrd(\alpha)\in\ZZ$ is called \textbf{integral}; $\alpha$ is integral if and only if it is contained in some order of $B$.

Given an ideal $I\subseteq B$, we can define \textbf{left and right orders of $I$},
\[\QuatO_L(I):=\{x\in B:xI\subseteq I\},\qquad \QuatO_R(I):=\{x\in B:Ix\subseteq I\}.\]
We say that $I$ is a \textbf{left ideal of $\QuatO$} if $\QuatO_L(I)=\QuatO$, and that $I$ is a \textbf{right ideal of $\QuatO'$} if $\QuatO_R(I)=\QuatO'$. In this scenario we say $I$ \textbf{links $\QuatO$ to $\QuatO'$}.

An ideal $I$ that is closed under multiplication is called an \textbf{integral ideal}. An integral ideal is necessarily contained in its left and right orders, and hence $\nrd(\alpha)\in\ZZ$ for all $\alpha$ in an integral ideal. Given an integral ideal $I\subseteq B$, the \textbf{reduced norm} of $I$ is defined to be
\[\nrd(I):=\gcd\{\nrd(\alpha)\mid\alpha\in I\}.\]
Observe that $I\subseteq J$ implies $\nrd(J) \mid \nrd(I)$.

An order is \textbf{maximal} if there are no orders properly containing it. Unlike number fields, for which the ring of integers is the unique maximal order, a quaternion algebra will typically have many distinct maximal orders. 

Given a quadratic order $\quadO$ and a maximal order $\QuatO\subseteq B$ we say that $\quadO$ is \textbf{optimally embedded} in $\QuatO$ if $\quadO\cong \QuatO\cap K$ for some subfield $K\subseteq B$. 

\subsection{The Deuring Correspondence}\label{deuringsec}

(See Chapter 42 of~\cite{voight_book} for details.)

Let $\Sp\subseteq\FF_{p^2}$ denote the set of $j$-invariants of supersingular curves. Given $j\in \Sp$, $\End(E_j)$ will be isomorphic to a maximal order in $B$. If $j$ and $j^p$ are $\FF_{p^2}$-conjugates, then $\End(E_j)$ and $\End(E_{j^p})$ will be isomorphic orders. Aside from this relation, non-isomorphic curves will always have non-isomorphic endomorphism rings. In fact, we have a bijection, known as the \emph{Deuring correspondence}:
\[\Sp/(j\sim j^p)\leftrightarrow \{\text{maximal orders of }B\}/\cong\]
sending $j$ to the endomorphism ring of $E_j$. The degree (resp. trace, resp. dual) of an endomorphism is equal to the norm (resp. trace, resp. conjugate) of the corresponding element of $B$, and composition of endomorphisms corresponds to multiplication of elements of $B$. Further, suppose we fix a maximal order $\QuatO_j$ associated to $\End(E_j)$ for some $j$. Then we have a one-to-one correspondence
\begin{align*}
\{\text{separable isogenies out of }E_j\}/\cong &\leftrightarrow\{\text{left ideals of }\QuatO_j\}.
\end{align*}
An isogeny $\phi:E_j\to E'$ will correspond to an ideal $I$ linking $\QuatO_j$ to some maximal order $\QuatO_{j'}$ isomorphic to $\End(E')$ (that is, $I$ is a left $\QuatO_j$-ideal and a right $\QuatO_{j'}$-ideal), and $\deg\phi=\nrd(I)$. 
	\section{Distance Between Maximal Orders}\label{quatproofs}

\subsection{Definitions for Maximal Orders}

In order to use the Deuring correspondence to express Theorem~\ref{clusterthm} in the language of maximal orders, we must have a notion of $M$-small and a notion of distance for maximal orders. The first of these is straightforward.

\begin{defn}
    An order $\QuatO\subseteq B$ is \textbf{$M$-small} if there exists $\alpha\in\QuatO-\ZZ$ with $\nrd(\alpha)\leq M$.
\end{defn}

\noindent
Then a supersingular curve is $M$-small if and only if its endomorphism ring is an $M$-small maximal order. Our next task is to come up with a definition of distance between maximal orders that is compatible with Definition~\ref{isogdist}.

\begin{lem}\label{disteq}
    If $\QuatO,\QuatO'\subseteq B$ are maximal orders, the following quantities are all equal:
    \begin{enumerate}[label=(\alph*)]
        \item $|\QuatO:\QuatO\cap\QuatO'|$ (the index of $\QuatO\cap\QuatO'$ in $\QuatO$).
        \item $|\QuatO':\QuatO\cap\QuatO'|$ (the index of $\QuatO\cap\QuatO'$ in $\QuatO'$).
        \item The smallest reduced norm of an integral ideal linking $\QuatO$ to $\QuatO'$. 
    \end{enumerate}
\end{lem}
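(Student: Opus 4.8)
The plan is to establish a cycle of inequalities or direct equalities linking (a), (b), and (c). First I would handle the equality of (a) and (b): since $\QuatO$ and $\QuatO'$ are both maximal orders in $B$, they have the same reduced discriminant (namely the discriminant of $B$, which is $p$ up to the usual conventions), so they have the same covolume with respect to any fixed additive Haar measure on $B\otimes\RR$; equivalently, as $\ZZ$-lattices of full rank $4$ they have equal index-normalized volume. Intersecting with $\QuatO'$ gives a common sublattice $\QuatO\cap\QuatO'$, and comparing covolumes yields $|\QuatO:\QuatO\cap\QuatO'|\cdot\vol(\QuatO)=\vol(\QuatO\cap\QuatO')=|\QuatO':\QuatO\cap\QuatO'|\cdot\vol(\QuatO')$; since $\vol(\QuatO)=\vol(\QuatO')$ the two indices agree. (One must be slightly careful that $\QuatO\cap\QuatO'$ is still full rank, which holds because both contain $\ZZ\cdot 1$ and are commensurable as orders.)

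Next I would connect this common index to (c). Given maximal orders $\QuatO,\QuatO'$, the \emph{connecting ideal} $I=\QuatO\QuatO'$ (the $\ZZ$-module generated by products $xy$ with $x\in\QuatO$, $y\in\QuatO'$), or more precisely its suitable integral normalization, is a left $\QuatO$-ideal and right $\QuatO'$-ideal; by the Deuring-correspondence facts recalled in Section~\ref{deuringsec} such ideals exist and are classified, and among all integral ideals linking $\QuatO$ to $\QuatO'$ there is one of minimal reduced norm. The key local computation is that everything in sight is determined place-by-place: at every prime $\ell\neq p$, $B_\ell\cong\Mat_2(\QQ_\ell)$ and both $\QuatO_\ell,\QuatO'_\ell$ are conjugate to $\Mat_2(\ZZ_\ell)$, so the pair is described (up to conjugacy) by the elementary divisors of one lattice inside a scaling of the other — i.e. an invariant factor $\ell^{a_\ell}$ — and one checks directly in $\Mat_2(\ZZ_\ell)$ that $|\QuatO_\ell:\QuatO_\ell\cap\QuatO'_\ell|=\ell^{2a_\ell}$ while the minimal norm of a linking integral ideal is $\ell^{a_\ell}$... wait, I should double-check the exponent bookkeeping, since the reduced norm of a $2\times 2$ integral ideal of elementary divisor type $\mathrm{diag}(1,\ell^a)$ is $\ell^a$, and the corresponding index in $\Mat_2(\ZZ_\ell)$ of the intersection with a conjugate works out to match (a); at $\ell=p$ both orders are maximal in the division algebra $B_p$, hence equal, contributing nothing. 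Assembling these local contributions via the local-global dictionary for lattices (and for reduced norms of ideals, which is multiplicative over places) gives (a)$=$(c).

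The main obstacle I anticipate is the careful local bookkeeping at primes $\ell \neq p$: getting the precise relationship between (i) the invariant/elementary divisors describing the relative position of $\QuatO_\ell$ and $\QuatO'_\ell$, (ii) the index of their intersection, and (iii) the minimal reduced norm of an integral linking ideal, all with consistent normalizations of "reduced norm of an ideal." In particular one has to verify that the \emph{minimal} reduced norm is achieved by the ideal $\QuatO\cap\QuatO'$ itself (appropriately interpreted), or by the connecting ideal, rather than merely bounded by it; this requires showing that an integral ideal $I$ linking $\QuatO$ to $\QuatO'$ must satisfy $I\subseteq\QuatO\cap\QuatO'$ up to the relevant scaling, so that $\nrd(\QuatO\cap\QuatO')\mid\nrd(I)$ by the containment-divisibility remark in the excerpt, and then exhibiting equality. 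I would lean on the structure theory in Chapter 16--17 of \cite{voight_book} (Eichler orders, connecting ideals, local-global for ideals) to keep this clean rather than doing matrix computations by hand, and double-check the claim that two maximal orders are always linked by an \emph{integral} ideal of the stated minimal norm — this should follow since $B$ has class number one in the relevant sense only when $p$ is small, so in general I instead use that $\QuatO$ and $\QuatO'$ are connected by \emph{some} ideal and then clear denominators to make it integral while tracking the norm.
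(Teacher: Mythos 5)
Your proposal follows essentially the same route as the paper's: reduce to a local statement at each prime, observe that nothing interesting happens at $p$ (both orders are the unique maximal local order), and at $\ell\neq p$ compare the index of $\QuatO_\ell\cap\QuatO'_\ell$ to the reduced norm of a minimal linking ideal using the $M_2(\ZZ_\ell)$ picture. The covolume argument you give for (a)$=$(b) is a clean alternative to what the paper does (the paper simply reads off (a)$=$(b) from the same local computation); it is sound provided you note, as you do, that $\QuatO\cap\QuatO'$ is full rank, which is automatic for commensurable lattices.

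The main gap is the exponent you flag but do not resolve. If $\QuatO'_\ell=\varpi^{-e}\QuatO_\ell\varpi^e$ with $\varpi=\left(\begin{smallmatrix}\ell&0\\0&1\end{smallmatrix}\right)$, then a direct computation gives
\[
\QuatO_\ell\cap\QuatO'_\ell=\begin{pmatrix}\ZZ_\ell & \ZZ_\ell\\ \ell^e\ZZ_\ell & \ZZ_\ell\end{pmatrix},
\qquad
|\QuatO_\ell:\QuatO_\ell\cap\QuatO'_\ell|=\ell^{e},
\]
not $\ell^{2e}$, while $\QuatO_\ell\varpi^e$ is the linking ideal of minimal reduced norm $\ell^{e}$. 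Your initial guess $\ell^{2a_\ell}$ would in fact contradict your own covolume argument (it would force (a)$\neq$(c) while (a)$=$(b)), so it is worth actually carrying out this two-line matrix computation rather than leaving it as a suspicion. Two smaller points: $\QuatO\cap\QuatO'$ is an Eichler order, not an ideal, so "the minimal reduced norm is achieved by $\QuatO\cap\QuatO'$ itself" needs rephrasing — the observation you actually want is that any integral linking ideal $I$ sits inside $\QuatO\cap\QuatO'$, which bounds things below, and the explicit ideal $\QuatO_\ell\varpi^e$ achieves the bound locally. And the worry about class number is a red herring: the existence of an integral ideal linking any two maximal orders of $B$ is unconditional (e.g., take $N\QuatO\QuatO'$ for suitable $N\in\ZZ_{>0}$) and has nothing to do with whether $B$ has class number one.
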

\begin{proof}
    We observe that these values are equal if and only if the corresponding quantities obtained by localizing at each prime are all equal~\cite[Lemma 9.5.7]{voight_book}. There is a unique maximal order at the ramified prime $p$, and so all three of the local quantities at $p$ are equal to $1$. 
    
    For $\ell\neq p$, the statement follows from  the theory of the Bruhat-Tits Tree~\cite[Section 23.5]{voight_book}. Specifically, we have $B_\ell\cong M_2(\QQ_\ell)$. With respect to an appropriate basis, if we set $\varpi=\begin{psmallmatrix}\ell&0\\0&1\end{psmallmatrix}$, we will have $\QuatO_\ell=M_2(\ZZ_\ell)$ and $\QuatO'_\ell=\varpi^{-e}\QuatO_\ell\varpi^{e}$ for some exponent $e$~\cite[Lemma 23.5.14]{voight_book}. Then $\QuatO_\ell\varpi^e=\varpi^e\QuatO'_\ell$ is the linking ideal of smallest reduced norm, and we can check directly that
    \[|\QuatO_\ell:\QuatO_\ell\cap\QuatO'_\ell|=|\QuatO'_\ell:\QuatO_\ell\cap\QuatO'_\ell|=\nrd(\QuatO_\ell\varpi^e)=\ell^e.\qedhere\]
\end{proof}

\begin{defn}
    The \textbf{distance from $\QuatO$ to $\QuatO'$}, $d(\QuatO,\QuatO')$, is any of the equivalent quantities in Lemma~\ref{disteq}.
\end{defn}

\noindent
Note that $\log d$ defines a metric on the set of maximal orders of $B$. Positive-definiteness and symmetry follow immediately from definition. If $I$ and $J$ are the integral ideals of smallest reduced norm linking $\QuatO$ to $\QuatO'$ and $\QuatO'$ to $\QuatO''$, respectively, then $IJ$ is an integral ideal linking $\QuatO$ to $\QuatO''$. Since $\nrd(IJ)\leq \nrd(I)\nrd(J)$ for any compatible ideals $I$ and $J$~\cite[Example 16.3.6]{voight_book}, $\log d$ satisfies the triangle inequality. We can compare distances between elliptic curves and distances between maximal orders as follows.
\begin{lem}\label{distcompare}
	Let $E$ and $E'$ be supersingular curves. Then
	\[d(E,E')=\min\{d(\QuatO,\QuatO')\mid \QuatO\cong \End(E),\QuatO'\cong\End(E')\}.\]
\end{lem}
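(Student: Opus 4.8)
The plan is to use the Deuring correspondence (Section~\ref{deuringsec}) to translate the two sides into statements about ideals. Recall that $d(E,E')$ is the minimum degree of an isogeny $E\to E'$ or $E\to E'^{(p)}$, and that under the Deuring correspondence, conjugate curves $E'$ and $E'^{(p)}$ have isomorphic endomorphism rings. Fix maximal orders $\QuatO$ and $\QuatO'$ with $\QuatO \cong \End(E)$ and $\QuatO' \cong \End(E')$ (these are unique up to isomorphism, and $\QuatO' \cong \End(E'^{(p)})$ as well). The separable isogenies out of $E$ correspond to left ideals of $\QuatO$; an isogeny $\phi : E \to E''$ corresponds to an ideal $I$ linking $\QuatO$ to a maximal order isomorphic to $\End(E'')$, with $\deg\phi = \nrd(I)$.

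First I would observe that every isogeny between supersingular curves is a composite of a separable isogeny with a power of Frobenius, and Frobenius $E''\to E''^{(p)}$ has degree $p$; since precomposing or postcomposing with Frobenius replaces a curve by its conjugate (whose endomorphism ring is isomorphic) and multiplies the degree by $p$, the minimal-degree isogeny realizing $d(E,E')$ may be taken to be separable (any inseparable part only increases the degree, as $p \geq 5 > $ the relevant bounds are not even needed here — one just notes $p>1$). Thus $d(E,E')$ equals the minimal $\nrd(I)$ over integral ideals $I$ that are left ideals of $\QuatO$ and right ideals of some maximal order isomorphic to $\End(E')$ (equivalently to $\End(E'^{(p)})$, but that is the same isomorphism class).

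Next I would match this with the right-hand side. Given any pair $\QuatO_1 \cong \End(E)$, $\QuatO_2 \cong \End(E')$, an isomorphism $B\to B$ carrying $\QuatO_1$ to $\QuatO$ carries a linking ideal of smallest reduced norm to a linking ideal between $\QuatO$ and some conjugate of $\QuatO_2$, preserving reduced norm; conversely any integral ideal linking $\QuatO$ to a maximal order isomorphic to $\QuatO'$ is, after conjugation, a linking ideal between $\QuatO_1$ and $\QuatO_2$. Hence $d(\QuatO_1,\QuatO_2)$ depends only on the isomorphism classes, and the minimum over all such pairs equals the minimal $\nrd(I)$ over integral ideals $I$ linking $\QuatO$ to a maximal order in the isomorphism class of $\QuatO'$. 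Combined with the previous paragraph, both sides equal this common quantity, proving the lemma.

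The main obstacle is the bookkeeping around Frobenius and inseparability: one must be careful that "isogeny $E\to E'$ or $E\to E'^{(p)}$" on the curve side corresponds exactly to "linking ideal to a maximal order in the isomorphism class of $\End(E')$" on the order side, using that conjugation by Frobenius is exactly the operation identified in the Deuring correspondence as passing between the (at most two) curves with isomorphic endomorphism rings, and that allowing inseparable isogenies never helps since it only scales the degree up by powers of $p$. Everything else is a direct unwinding of the correspondence in Section~\ref{deuringsec}.
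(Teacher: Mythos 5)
Your proposal is correct and follows the same route as the paper: both sides are identified, via the Deuring correspondence, with $\min\{\deg\phi\mid \phi:E\to E''\text{ for some $E''$ with }\End(E'')\cong\End(E')\}$. The paper states this as a one-line observation, while you spell out the two details it leaves implicit, namely that the minimizing isogeny may be taken separable (so the correspondence with left ideals applies) and that $d(\QuatO,\QuatO')$ depends only on isomorphism classes of maximal orders.
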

\begin{proof}
	By the Deuring correspondence, both sides are equal to
	\[\min\{\deg\phi\mid \phi:E\to E''\text{ for some $E''$ with }\End(E'')\cong\End(E')\}.\qedhere\]
\end{proof}

\subsection{Two Key Propositions}

Suppose that $\QuatO$ and $\QuatO'$ are each $M$-small maximal orders in $B$. Let $\alpha\in \QuatO-\ZZ$ and $\alpha'\in\QuatO'-\ZZ$ each have reduced norm at most $M$. We will show that the distance from $\QuatO$ to $\QuatO'$ is small if $\QQ(\alpha)$ is isomorphic to $\QQ(\alpha')$, and is large otherwise. Precisely, we will prove the following two results, which are quaternion algebra analogues of results (a) and (b) of Theorem~\ref{clusterthm}.

\begin{prop}\label{largeinters}
	If $\QQ(\alpha)\not\cong\QQ(\alpha')$, then $d(\QuatO,\QuatO')^2\geq\frac{p}{4M^2}$.
\end{prop}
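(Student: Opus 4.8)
The plan is to produce a single nonzero element of $B$ — the commutator $w := \alpha\alpha' - \alpha'\alpha$ — whose reduced norm is simultaneously bounded above by $4M^2$ and forced to be divisible by $p$ after accounting for a factor of $d(\QuatO,\QuatO')^2$. The divisibility by $p$ will be extracted from the fact that $B$ is ramified at $p$.

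First I would pass to the trace-zero parts $\beta := \alpha - \tfrac12\trd(\alpha)$ and $\beta' := \alpha' - \tfrac12\trd(\alpha')$. These generate the same subfields as $\alpha,\alpha'$, satisfy $\beta^2 = -\nrd(\beta)\in\QQ$ and $\beta'^2 = -\nrd(\beta')\in\QQ$ with $0<\nrd(\beta)\le\nrd(\alpha)\le M$ (and likewise for $\beta'$), and give $w = [\alpha,\alpha'] = [\beta,\beta']$ since the subtracted scalars are central. Because $\QQ(\alpha)\not\cong\QQ(\alpha')$ these are distinct subfields, so $\beta$ and $\beta'$ do not commute (two commuting non-rational elements of the division algebra $B$ would generate a subfield of degree at most $2$ containing both, forcing $\QQ(\beta)=\QQ(\beta')$); hence $w\neq0$. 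A two-line computation using $\beta^2\in\QQ$ gives $\beta w = -w\beta$, so $i:=2\beta$ and $j:=w$ anticommute with $i^2,j^2\in\QQ^\times$ and therefore span $B$; this exhibits an isomorphism $B\cong\left(\frac{-4\nrd(\beta),\,-\nrd(w)}{\QQ}\right)$.

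Next I would pin down $\nrd(w)$ from both sides. For the upper bound: for trace-zero $x,y$ one has $xy = \tfrac12\trd(xy) + \tfrac12[x,y]$ with $[x,y]$ trace-zero, so $\nrd(x)\nrd(y) = \nrd(xy) = \tfrac14\trd(xy)^2 + \tfrac14\nrd([x,y])$; applied to $\beta,\beta'$ this gives $\nrd(w) = 4\nrd(\beta)\nrd(\beta') - \trd(\beta\beta')^2 \le 4M^2$. For the divisibility: $4\nrd(\beta) = 4\nrd(\alpha)-\trd(\alpha)^2$ is a positive integer, and it is less than $p$ (as $p>4M^2$), so $-4\nrd(\beta)\in\ZZ_p^\times$; since $B$ is ramified at the odd prime $p$, the Hilbert symbol satisfies $-1 = (-4\nrd(\beta),-\nrd(w))_p = \legendre{-4\nrd(\beta)}{p}^{\,v_p(\nrd(w))}$, which forces $v_p(\nrd(w))$ to be odd. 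On the other hand $\QuatO$ and $\QuatO'$ have the same completion at the ramified prime $p$, namely the unique maximal order of the division algebra $B_p$, which contains $\alpha$ and $\alpha'$ and hence $w$; so $\nrd(w)\in\ZZ_p$, i.e.\ $v_p(\nrd(w))\ge0$, whence $v_p(\nrd(w))\ge1$. Finally, writing $n:=d(\QuatO,\QuatO')$, Lemma~\ref{disteq} gives $n\QuatO'\subseteq\QuatO$, so $n\alpha'\in\QuatO$ and therefore $nw = [\alpha,\,n\alpha']\in\QuatO$ is integral; thus $n^2\nrd(w) = \nrd(nw)$ is a positive integer divisible by $p$, so $n^2\nrd(w)\ge p$. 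Combining the two bounds gives $p\le n^2\nrd(w)\le 4M^2 n^2$, i.e.\ $d(\QuatO,\QuatO')^2\ge p/(4M^2)$.

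I expect the genuine idea to be the recognition that presenting $B$ on the basis $\{1,2\beta,w,2\beta w\}$ converts ramification at $p$ directly into the statement $p\mid\nrd(w)$; once that is in hand, the remaining estimates are routine. The point that needs care is pairing the parity statement "$v_p(\nrd(w))$ is odd" with an honest integer lower bound, which is precisely where the uniqueness of the maximal order of $B_p$ at the ramified prime enters. I would also take care to apply integrality to $nw$ rather than to a larger multiple, so that no spurious power of $2$ inflates the constant $4M^2$.
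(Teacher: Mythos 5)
Your proof is correct, and it takes a genuinely different route from the paper's. The published proof quotes Kaneko (Theorem~\ref{subfieldbound}): if two distinct imaginary quadratic orders $\quadO,\quadO'$ are optimally embedded in a single maximal order then $\disc\quadO\cdot\disc\quadO'\geq 4p$; this is applied to $\QQ(\alpha)\cap\QuatO$ and $\QQ(\alpha')\cap\QuatO$ (bringing $\alpha'$ into $\QuatO$ via $d\alpha'\in\QuatO$), and the conclusion follows from $4\nrd(\alpha)\geq|\disc(\QQ(\alpha)\cap\QuatO)|$. You instead reprove the needed divisibility from scratch: the identity $\nrd([\beta,\beta'])=4\nrd(\beta)\nrd(\beta')-\trd(\beta\beta')^2$ bounds $\nrd(w)$ above by $4M^2$; the presentation $B\cong\left(\frac{-4\nrd(\beta),\,-\nrd(w)}{\QQ}\right)$ together with ramification at $p$ forces $v_p(\nrd(w))$ to be odd; local integrality of $w$ in the unique maximal order $\QuatO_p$ gives $v_p(\nrd(w))\geq 1$; and $\nrd(nw)\in\ZZ$ makes $n^2\nrd(w)\geq p$. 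The quantity you compute is essentially the invariant at the heart of Kaneko's proof, since $\disc(1,\beta,\beta',\beta\beta')=-\nrd(w)^2$, so both arguments are estimating the same thing; your derivation of $p\mid\nrd(w)$ via Hilbert symbols is self-contained and has the appealing feature that the divisibility comes purely from the local picture at $p$, where $\alpha$ and $\alpha'$ automatically sit in the same unique maximal order, so the index $n=d(\QuatO,\QuatO')$ enters only through the integrality/lower-bound step and not through the divisibility step. The paper's route is shorter on the page and reuses Theorem~\ref{subfieldbound} again in Appendix~\ref{diffprimeisog}, so there is some economy in quoting it. One small caveat: you invoke $p>4M^2$ (to get $4\nrd(\beta)<p$, hence $-4\nrd(\beta)\in\ZZ_p^\times$), which is a hypothesis of Theorem~\ref{clusterthm} but not of this Proposition as stated; the case $p\leq 4M^2$ is trivial (then $p/(4M^2)\leq 1\leq d^2$), so it is easily patched, but the published proof needs no such hypothesis.
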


\begin{prop}\label{opath}
	If $\QQ(\alpha)\cong\QQ(\alpha')$, then there exists a sequence of (not necessarily distinct) maximal orders 
	\[\QuatO=\QuatO_0,\QuatO_1,\ldots,\QuatO_r\cong\QuatO'\]
	such that
	\begin{itemize}
		\item the distance between two consecutive terms is at most $\frac{4}{\pi}\sqrt{M}$, and
		\item each $\QuatO_i$ contains an element with the same minimal polynomial as either $\alpha$ or $\alpha'$.
	\end{itemize}  
	We can find such a sequence with $r\leq 3$, or alternatively we can find such a sequence such that consecutive orders are linked by an ideal of prime norm at most $\frac{4}{\pi}\sqrt{M}$.
\end{prop}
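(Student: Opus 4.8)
The plan is to conjugate so that $\alpha$ and $\alpha'$ lie in one common imaginary quadratic subfield $K\subseteq B$, and then to assemble the chain in three pieces: ascend from $\QuatO$ to a maximal order containing the maximal quadratic order $\quadO_K$ of $K$, traverse among the maximal orders containing $\quadO_K$ using the action of $\Cl(\quadO_K)$, and descend symmetrically to $\QuatO'$. The bound $\frac4\pi\sqrt M$ will come from combining Minkowski's bound $\frac2\pi\sqrt{|\disc\quadO_K|}$ on the norm of a representative of an ideal class with the estimate $|\disc\quadO_K|\le 4M$.

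First I would reduce to the case $\alpha,\alpha'\in K$. Since $\QQ(\alpha)\cong\QQ(\alpha')$, Theorem~\ref{skolnoeth} provides $\gamma\in B^\times$ conjugating $\QQ(\alpha')$ onto $K:=\QQ(\alpha)$; conjugation preserves the isomorphism type of an order, distances between orders, and minimal polynomials of elements, so I may replace $(\QuatO',\alpha')$ by $(\gamma^{-1}\QuatO'\gamma,\gamma^{-1}\alpha'\gamma)$ and thereby assume $\alpha\in\QuatO\cap K$ and $\alpha'\in\QuatO'\cap K$. As $\alpha$ is integral we have $\ZZ[\alpha]\subseteq\quadO_K$, and the integral ring $\quadO_K$ lies in some maximal order of $B$. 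Finally $|\disc\ZZ[\alpha]|=4\nrd(\alpha)-\trd(\alpha)^2\le 4M$, and $\disc\quadO_K\mid\disc\ZZ[\alpha]$ (and similarly with $\alpha'$), so $|\disc\quadO_K|\le 4M$.

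For the ascent, let $f$ be the conductor of the quadratic order $\QuatO\cap K$. Since $\ZZ[\alpha]\subseteq\QuatO\cap K\subseteq\quadO_K$, $f$ divides the conductor of $\ZZ[\alpha]$, so $f^2|\disc\quadO_K|\le|\disc\ZZ[\alpha]|\le 4M$; as $|\disc\quadO_K|\ge 3$ this gives $f\le\frac{2}{\sqrt3}\sqrt M<\frac4\pi\sqrt M$. Working prime by prime and using the volcano structure of the relevant Bruhat-Tits trees (as in the proof of Lemma~\ref{disteq}, made precise in Section~\ref{embeddingorders}), I move $\QuatO_\ell$ "up toward the crater" at each $\ell\mid f$ in $v_\ell(f)$ steps, each realized by a linking ideal of norm $\ell$; along such a path the intersection with $K_\ell$ only grows, so every intermediate maximal order still contains $\ZZ_\ell[\alpha]$. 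Assembling over $\ell\mid f$ produces a maximal order $\QuatO_a\supseteq\quadO_K$ with $d(\QuatO,\QuatO_a)\le f\le\frac4\pi\sqrt M$, reached through orders all containing $\alpha$, and (for the alternative statement) by steps of prime norm $\le f$. The same construction applied to $\QuatO'$ yields $\QuatO_b\supseteq\quadO_K$ within the same distance, through orders containing $\alpha'$.

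For the traverse, I would invoke the key output of Section~\ref{embeddingorders}: the (nonempty) set of maximal orders of $B$ containing $\quadO_K$ is acted on transitively by $\Cl(\quadO_K)$, where the class of a $\quadO_K$-ideal $\mathfrak a$ sends an order $\QuatO''$ to the right order of the integral linking ideal $\QuatO''\mathfrak a$, which has reduced norm $\N(\mathfrak a)$ and again contains $\quadO_K$ (since $\mathfrak a\subseteq K$ centralizes $\quadO_K$). Picking a representative $\mathfrak a$ of the class carrying $\QuatO_a$ to $\QuatO_b$ with $\N(\mathfrak a)\le\frac2\pi\sqrt{|\disc\quadO_K|}\le\frac4\pi\sqrt M$ (Minkowski) gives $d(\QuatO_a,\QuatO_b)\le\frac4\pi\sqrt M$ in a single step, and concatenating $\QuatO\to\QuatO_a\to\QuatO_b\to\QuatO'$ yields the chain with $r\le3$, every term containing a conjugate of $\alpha$ or of $\alpha'$. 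For the prime-norm version, factor $\mathfrak a$ into prime $\quadO_K$-ideals and discard the inert ones (which are principal, hence act trivially): the surviving primes have prime norm $\le\frac4\pi\sqrt M$, and stringing these steps together with the prime-norm ascent and descent steps gives the second chain, with all intermediate orders containing $\quadO_K\supseteq\ZZ[\alpha]$. I expect the torsor statement for maximal orders containing $\quadO_K$ — i.e., the transitivity of the $\Cl(\quadO_K)$-action on optimal embeddings of $\quadO_K$ — to be the main obstacle; once it and the volcano picture of the ascent are in hand, the rest is just the two discriminant inequalities and Minkowski's bound.
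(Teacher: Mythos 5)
Your proposal is correct and follows essentially the same route as the paper's: vertical ascent (the paper's Lemma~\ref{linkdown} and Corollary~\ref{bigstepdown}) to a maximal order containing $\quadO_K$, a horizontal traverse via Chevalley--Hasse--Noether together with Minkowski's bound (the paper's Lemma~\ref{linkacross}), then vertical descent, with the prime-norm variant obtained by factoring each step. The one small slip --- replacing the Chevalley--Hasse--Noether ideal by a Minkowski-small class representative $\mathfrak{a}\delta$ links $\QuatO_a$ to the conjugate $\delta^{-1}\QuatO_b\delta$ rather than to $\QuatO_b$ itself --- is exactly what the paper records as the automorphism $\phi$ in Lemma~\ref{linkacross}, and is harmless here since the proposition only asks for $\QuatO_r\cong\QuatO'$.
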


Proposition~\ref{opath} will be proven in Section~\ref{embeddingorders}; we will proceed with a proof of Proposition~\ref{largeinters}. We begin by quoting a theorem due to Kaneko:

\begin{thm}\cite[Theorem $2'$]{kaneko}\label{subfieldbound}
	Let $\QuatO\subseteq B$ be a maximal order. If $\quadO$ and $\quadO'$ are quadratic orders of imaginary quadratic fields, optimally embedded into $\QuatO$ with distinct images, then $\disc \quadO\disc \quadO'\geq 4p$. If in addition $\quadO$ and $\quadO'$ have isomorphic fields of fractions, then $\disc \quadO\disc \quadO'\geq p^2$.
\end{thm}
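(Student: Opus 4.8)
The plan is to force a relation between $\disc\quadO$, $\disc\quadO'$ and $p$ by exhibiting a suitable nonzero trace‑zero element of $\QuatO$ and computing its reduced norm. First note that the ambient quadratic subfields $K,K'\subseteq B$ of the two embedded orders must be \emph{distinct}: if $K=K'$ then optimality forces $\quadO=\QuatO\cap K=\QuatO\cap K'=\quadO'$ with the same image, contrary to hypothesis. Choose $\omega\in\quadO$, $\omega'\in\quadO'$ generating these orders over $\ZZ$, normalized so that $\trd(\omega),\trd(\omega')\in\{0,1\}$, and set $\beta:=2\omega-\trd(\omega)$, $\beta':=2\omega'-\trd(\omega')$. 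Then $\beta,\beta'\in\QuatO$, they are trace‑zero, $\beta^2=\disc\quadO$ and $\beta'^2=\disc\quadO'$, and $\beta,\beta'$ do not commute (the centralizer of $\beta$ in $B$ is $K$, and $\beta'\notin K$).

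The computational heart is one identity. Put $\delta:=\omega\omega'-\omega'\omega=\tfrac14(\beta\beta'-\beta'\beta)\in\QuatO$ and $\mu:=\trd(\beta\beta')\in\ZZ$. Since $\beta\beta'+\beta'\beta=\trd(\beta\beta')=\mu$ and $(\beta\beta')(\beta'\beta)=\beta\,\beta'^2\,\beta=\disc\quadO\,\disc\quadO'$, writing $\gamma:=\beta\beta'-\beta'\beta=4\delta$ gives $\gamma^2=\mu^2-4\,\disc\quadO\,\disc\quadO'$; as $\gamma$ is trace‑zero and nonzero this yields $16\,\nrd(\delta)=\nrd(\gamma)=4\,\disc\quadO\,\disc\quadO'-\mu^2>0$, hence
\[\disc\quadO\,\disc\quadO'=4\,\nrd(\delta)+\tfrac14\mu^2\ \ge\ 4\,\nrd(\delta).\]
Equivalently, $\ZZ+\ZZ\omega+\ZZ\omega'+\ZZ\omega\omega'$ is an order contained in $\QuatO$ of discriminant $\nrd(\delta)^2$, so $\nrd(\delta)=p\cdot[\QuatO:\ZZ+\ZZ\omega+\ZZ\omega'+\ZZ\omega\omega']$.

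For the first inequality I would pass to the completion at $p$: $\QuatO_p$ is the maximal order of the division algebra $B_p$, with maximal ideal $\mathfrak P$ satisfying $\QuatO_p/\mathfrak P\cong\FF_{p^2}$ (commutative) and $\mathfrak P^2=p\QuatO_p$. Since $\delta$ is a commutator of elements of $\QuatO_p$ it lies in $\mathfrak P$, so $p\mid\nrd(\delta)$; as $\delta\ne 0$ this gives $\nrd(\delta)\ge p$, hence $\disc\quadO\,\disc\quadO'\ge 4p$. For the refined inequality it suffices to upgrade this to $\nrd(\delta)\ge p^2/4$, and the cleanest route is to show $p\mid[\QuatO:\ZZ+\ZZ\omega+\ZZ\omega'+\ZZ\omega\omega']$, i.e.\ $p^2\mid\nrd(\delta)$ (then $\nrd(\delta)\ge p^2$ and $\disc\quadO\,\disc\quadO'\ge 4p^2\ge p^2$). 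Here the hypothesis of isomorphic fields of fractions enters in two ways: first, both discriminants are then $f^2D$ and $g^2D$ for one fundamental $D$, so $4\,\disc\quadO\,\disc\quadO'=(2fgD)^2$ is a perfect square and the identity factors as $16\,\nrd(\delta)=(2fgD-\mu)(2fgD+\mu)$; second, optimality makes $\quadO_p$ and $\quadO'_p$ both equal to the maximal order of the common local field $K_0\otimes\QQ_p$ inside $B_p$, so $\beta,\beta'$ are controlled elements of that quadratic subfield and one should be able to push the commutator one further step in the $\mathfrak P$‑adic filtration. When $p$ ramifies in $K_0$ this is immediate: $\beta,\beta'\in\mathfrak P$, whence $\gamma\in\mathfrak P^2=p\QuatO_p$ and $p^2\mid\nrd(\delta)$.

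The main obstacle is the case where $p$ is inert in $K_0$: then $\beta,\beta'$ are units of $\QuatO_p$, the naive ``commutator lands in $\mathfrak P^2$'' argument fails, and one must instead invoke the rigidity of local optimal embeddings of the unramified quadratic maximal order into the division algebra $B_p$ — essentially that any two such embeddings are conjugate by an element of $\QuatO_p^\times$ — to constrain how $\beta$ and $\beta'$ differ modulo $\mathfrak P^2$, and then combine this with the perfect‑square factorization above (and the resulting valuation constraints on $\mu$) to exclude every configuration with $\disc\quadO\,\disc\quadO'<p^2$. I would also dispose of $p\le 3$ separately, or simply invoke the standing assumption $p\ge 5$, since both the clean local structure of $B_p$ and the use of $\gcd(p,16)=1$ are convenient there.
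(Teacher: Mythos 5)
Your set-up — normalizing the generators to trace-zero elements $\beta,\beta'\in\QuatO$ with $\beta^2=\disc\quadO$, $\beta'^2=\disc\quadO'$, and deriving the identity $16\nrd(\delta)=4\disc\quadO\disc\quadO'-\mu^2$ for the commutator $\delta=\tfrac14(\beta\beta'-\beta'\beta)$ — is exactly the discriminant computation at the heart of Kaneko's proof, and your derivation of the first inequality is complete. Indeed $\delta$ is a nonzero commutator of elements of $\QuatO$, hence lies in the maximal ideal $\mathfrak P$ at the ramified prime, forcing $p\mid\nrd(\delta)$; equivalently $\nrd(\delta)^2$ is the discriminant of the sublattice $\ZZ+\ZZ\omega+\ZZ\omega'+\ZZ\omega\omega'\subseteq\QuatO$, hence a square multiple of $\disc\QuatO=p^2$. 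Either way $\nrd(\delta)\ge p$, and so $\disc\quadO\disc\quadO'\ge 4\nrd(\delta)\ge 4p$.

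For the second inequality there is a genuine gap. The ``cleanest route'' you propose — proving $p^2\mid\nrd(\delta)$ — is simply false when $p$ is inert in $K_0$. Write $B_p=L_p\oplus L_p\pi$ with $\pi^2=p$, $\pi x=\sigma(x)\pi$, $\mathfrak P=\pi\QuatO_p$, take $\beta'=\theta\in\quadO_{L_p}$ trace-zero, and set $\beta=u^{-1}\theta u$ for the unit $u=1+\pi$. One finds $\beta=c_1\theta+c_2\theta\pi$ with $c_1,c_2\in\ZZ_p^\times$, so $\gamma=\beta\theta-\theta\beta=-2c_2\theta^2\pi$ is a unit times $\pi$; thus $v_{\mathfrak P}(\gamma)=1$ and $v_p(\nrd(\delta))=1$ exactly (using $p\ge 5$ so $p\nmid 16$). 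Local rigidity of the embeddings does not push the commutator into $\mathfrak P^2$, and if $p^2\mid\nrd(\delta)$ always held you would obtain $\disc\quadO\disc\quadO'\ge 4p^2$, stronger than Kaneko's stated bound — a further sign the route cannot be right. Your closing paragraph gestures at combining the perfect-square factorization with valuation constraints but does not carry this out, so the inert case is not proved.

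What actually closes the argument is the factorization you wrote down, pushed one integrality step further. With $N:=fg|D|$ we have $\disc\quadO\disc\quadO'=N^2$ and $16\nrd(\delta)=(2N-\mu)(2N+\mu)$. Since $\trd(\omega\omega')=\tfrac14(\mu+2tt')\in\ZZ$, $\mu$ is even; set $a:=N-\mu/2$ and $b:=N+\mu/2$. These are positive integers (positivity of $\nrd(\delta)$ gives $|\mu|<2N$) of the same parity (their sum is $2N$), and $ab=4\nrd(\delta)\equiv 0\pmod 4$, so both are even. Writing $a=2a_0$, $b=2b_0$ gives $a_0b_0=\nrd(\delta)$ and $a_0+b_0=N$ with $a_0,b_0\ge 1$. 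Since $p\mid\nrd(\delta)=a_0b_0$ and $p$ is prime, one of $a_0,b_0$ is at least $p$, whence $N=a_0+b_0\ge p+1$ and $\disc\quadO\disc\quadO'=N^2>p^2$. This works uniformly in the ramified and inert cases: the second inequality comes from positivity and integrality of $a_0,b_0$, not from a sharper $p$-adic valuation of $\nrd(\delta)$.
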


\noindent
The proof proceeds by explicitly computing the discriminant of the suborder generated by $\quadO$ and $\quadO'$; noting that it must be a multiple of $p^2$ gives the desired inequality. Using this, we can prove our first bound.

\begin{proof}[Proof of Proposition~\ref{largeinters}]
    Let 
    \[\quadO:=\QQ(\alpha)\cap\QuatO\qquad\text{and}\qquad \quadO':=\QQ(\alpha')\cap\QuatO.\]
    both be optimally embedded in $\QuatO$. Since $\QQ(\alpha)\not\cong\QQ(\alpha')$, these are distinct, so Theorem~\ref{subfieldbound} implies that $\disc \quadO\disc \quadO'\geq 4p$. 
    
    Let $D$ denote the discriminant of $K=\QQ(\alpha)$. Since $\alpha\in \quadO-\ZZ$, and the quadratic order $\quadO$ must be of the form $\quadO=\ZZ+f\quadO_K$ for some positive integer $f$, we have
    \[\nrd(\alpha)\geq \N_{K/\QQ}\left(\tfrac12 f\sqrt{D}\right)=\frac{f^2D}{4}=\frac14 \disc \quadO.\]
    
    Letting $d=d(\QuatO,\QuatO')=|\QuatO':\QuatO\cap\QuatO'|$, we have $d\alpha'\in \QuatO\cap\QuatO'\subseteq\QuatO$. As above, we can compute $d^2\nrd(\alpha')\geq\frac 14\disc \quadO'$. Hence
    \[d^2\geq \frac{\disc\quadO'}{4\nrd(\alpha')}\frac{\disc\quadO}{4\nrd(\alpha)}\geq \frac{p}{4M^2}. \qedhere\]
\end{proof}

\section{Optimal Embeddings}\label{embeddingorders}

Let $K$ be an imaginary quadratic field of discriminant $D$, and let two maximal orders $\QuatO,\QuatO'$ of $B$ each admit an optimal embedding of some quadratic order of $K$. If these optimally embedded quadratic orders both have small discriminant, our goal is to construct a sequence of maximal orders from $\QuatO$ to $\QuatO'$ such that the distance between two consecutive orders is small. 

To do this, we will need to consider two types of relations between maximal orders. If two maximal orders have the same quadratic order optimally embedded in each, we call the relationship between them a ``horizontal step''; if one of the optimally embedded orders is a proper subset of the other, the relationship is called a ``vertical step''.\footnote{The terminology is meant to draw a comparison with isogeny graphs of ordinary elliptic curves, in which there are horizontal isogenies which preserve the endomorphism ring and vertical isogenies which change it~\cite{suth_isogvolc}.} 

\begin{rmk}
    A fixed embedding $K\hookrightarrow B$ defines a unique optimally embedded quadratic order $K\cap \QuatO$. However, there is not a unique embedding of $K$ into $B$ (see Theorem~\ref{skolnoeth}), so it is possible for multiple distinct quadratic orders of $K$ to all optimally embed into a single maximal order. Strictly speaking, ``horizontal'' and ``vertical'' steps are not relations between maximal orders, but rather between pairs of the form $(\QuatO,\quadO)$, where $\quadO$ is a quadratic order optimally embedded in a maximal order $\QuatO$; if we do not specify $\quadO$, then these relations will not be well-defined.
\end{rmk}

\subsection{Horizontal Steps}

First we consider the case in which the same quadratic order $\quadO$ is optimally embedded in two maximal orders $\QuatO$ and $\QuatO'$. For this we will use a version of the Chevalley-Hasse-Noether Theorem proved by Eichler.%

\begin{thm}[{\cite[Satz 7]{eichler}}]\label{chevhassnoeth}
    Let $\QuatO,\QuatO'\subseteq B$ be two maximal orders, and suppose
	\[\quadO\cong K\cap\QuatO=K\cap\QuatO'\]
	is optimally embedded in each. Then there is an invertible ideal\footnote{Eichler simply states that there must exist an ideal $\mathfrak{a}$ of $\quadO$ with $\QuatO\mathfrak{a}=\mathfrak{a}\QuatO'$, but he defines ideals to be locally principal~\cite[133]{eichler} and this implies invertibility.} $\mathfrak{a}$ of $\quadO$ such that $\QuatO\mathfrak{a}=\mathfrak{a}\QuatO'$.
\end{thm}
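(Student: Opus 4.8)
This is Theorem~\ref{chevhassnoeth} (Chevalley–Hasse–Noether, after Eichler): given two maximal orders $\QuatO, \QuatO'$ in $B$ with a common optimally embedded quadratic order $\quadO \cong K\cap\QuatO = K\cap\QuatO'$, there is an invertible ideal $\mathfrak a$ of $\quadO$ with $\QuatO\mathfrak a = \mathfrak a\QuatO'$.

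The plan is to argue locally and then glue, exactly as in the proof of Lemma~\ref{disteq}. First I would localize at each prime $\ell$: by the usual local-global dictionary for lattices and ideals in $B$~\cite[Lemma 9.5.7]{voight_book}, it suffices to produce, for every prime $\ell$, an invertible $\quadO_\ell$-ideal $\mathfrak a_\ell$ with $\QuatO_\ell\mathfrak a_\ell = \mathfrak a_\ell\QuatO_\ell'$, with $\mathfrak a_\ell = \quadO_\ell$ for all but finitely many $\ell$ (namely those $\ell$ where $\QuatO_\ell \ne \QuatO_\ell'$). Then the global $\mathfrak a := \bigcap_\ell(\mathfrak a_\ell\cap K)$ is an invertible $\quadO$-ideal with the desired property, since both $\QuatO\mathfrak a$ and $\mathfrak a\QuatO'$ are lattices that localize correctly at every prime.

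So the work is purely local. Fix a prime $\ell$. At the ramified prime $\ell = p$ there is a unique maximal order, so $\QuatO_p = \QuatO_p'$ and we take $\mathfrak a_p = \quadO_p$. For $\ell \ne p$ we have $B_\ell \cong M_2(\QQ_\ell)$, and as in Lemma~\ref{disteq} we may pick coordinates so that $\QuatO_\ell = M_2(\ZZ_\ell)$ and $\QuatO_\ell' = \varpi^{-e}\QuatO_\ell\varpi^e$ with $\varpi = \left(\begin{smallmatrix}\ell&0\\0&1\end{smallmatrix}\right)$; equivalently, $\QuatO_\ell$ and $\QuatO_\ell'$ are the stabilizers of two lattices $L, L'$ at distance $e$ on the Bruhat–Tits tree, joined by a path $L = L_0 \supsetneq L_1 \supsetneq \cdots \supsetneq L_e = \ell^? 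L'$. The embedded quadratic order $\quadO_\ell \subseteq K_\ell$ fixes both $L$ and $L'$, hence (being a ring) fixes every lattice on the geodesic between them — this is the key point where optimality and the common embedding get used, and I expect it to be the main technical nub: one must check that an order of $M_2(\ZZ_\ell)$ stabilizing the two endpoints of a tree geodesic stabilizes the whole path, which follows from convexity of fixed-point sets of the $\quadO_\ell$-action on the tree (the action is by isometries and the fixed set is a subtree). Granting that, $\quadO_\ell$ acts on the chain $L_0/L_1 \supseteq \cdots$, and one extracts a $\quadO_\ell$-lattice $\mathfrak a_\ell \subseteq K_\ell$ with $L' = L\mathfrak a_\ell$ (concretely, track how $\quadO_\ell$ moves along the path: over $\ZZ_\ell$, $\quadO_\ell$ is either maximal or nonmaximal in $K_\ell$, and in either case its invertible ideals are generated by powers of a uniformizer or of the relevant prime ideals, which is enough to realize any single step and hence the composite). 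Then $\QuatO_\ell\mathfrak a_\ell = \{x \in B_\ell : xL \subseteq L' \} \cdot(\text{unit})$ and $\mathfrak a_\ell\QuatO_\ell'$ are both equal to the $\quadO_\ell$-$\quadO_\ell'$ linking bimodule $\{x : xL \subseteq L'\}$, giving $\QuatO_\ell\mathfrak a_\ell = \mathfrak a_\ell\QuatO_\ell'$.

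Invertibility of the resulting global $\mathfrak a$ is automatic: we have arranged $\mathfrak a_\ell$ to be a nonzero finitely generated $\quadO_\ell$-submodule of the field $K_\ell$, hence a fractional ideal, and every fractional ideal over the complete DVR-ish local ring $\quadO_\ell$ (or more precisely, since $\quadO_\ell$ may be nonmaximal, every $\quadO_\ell$-ideal that is locally principal — which these are, being rank-one lattices realized inside $K_\ell$ as multiples of explicit elements) is invertible; since $\mathfrak a_\ell = \quadO_\ell$ for almost all $\ell$, $\mathfrak a$ is locally principal everywhere, hence invertible. This is why the footnote in the statement remarks that Eichler's "ideal" is automatically invertible. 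I would keep the exposition short by citing Lemma~\ref{disteq}'s Bruhat–Tits setup verbatim and only spelling out the new ingredient, namely that a common optimal embedding forces $\quadO_\ell$ to fix the entire geodesic, from which the local ideal is read off.
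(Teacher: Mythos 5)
The paper does not prove this theorem—it quotes it directly as Eichler's Satz~7—so there is no in-paper proof to compare against; the question is whether your reconstruction stands on its own. Your local–global outline (produce local elements $\alpha_\ell\in K_\ell^\times$ with $\QuatO_\ell\alpha_\ell=\alpha_\ell\QuatO'_\ell$, glue to an ideal $\mathfrak a$) is the right shape, and the gluing and invertibility remarks at the end are fine. The gap is in the local heart of the argument, and it is a real one.

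You place the use of optimality in the claim that $\quadO_\ell$, stabilizing both endpoints, stabilizes the whole geodesic. That claim is true but uses only containment, not optimality: $\quadO_\ell\subseteq\QuatO_\ell\cap\QuatO'_\ell$, which is an Eichler order, and an Eichler order lies in every maximal order along the geodesic between its two endpoints. Where optimality actually matters is precisely the step you wave at—``track how $\quadO_\ell$ moves along the path.'' The intermediate lattices $L_1,\dots,L_{e-1}$ on that geodesic are stabilized by $\quadO_\ell$, but $\quadO_\ell$ is typically \emph{not} optimally embedded in their endomorphism rings; as $\quadO_\ell$-modules they need not be invertible. So there is no reason an individual edge $L_{i-1}\supsetneq L_i$ is realized by multiplication by a fractional $\quadO_\ell$-ideal, and the step-by-step tracking does not obviously close.

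The repair is to bypass the path entirely. Identify $\QQ_\ell^2\cong K_\ell$ via the embedding; then $L$ and $L'$ are fractional $\quadO_\ell$-ideals in $K_\ell$, and optimality of the embedding into $\QuatO_\ell$ (resp.\ $\QuatO'_\ell$) is exactly the statement that $\End_{\quadO_\ell}(L)=K_\ell\cap\QuatO_\ell=\quadO_\ell$ (resp.\ the same for $L'$), i.e.\ that $L$ and $L'$ are \emph{invertible} $\quadO_\ell$-ideals. Hence $\mathfrak a_\ell:=L^{-1}L'$ is an invertible fractional $\quadO_\ell$-ideal, therefore principal over the semi-local ring $\quadO_\ell$, and a generator $\alpha_\ell$ gives $\alpha_\ell^{-1}\QuatO_\ell\alpha_\ell=\QuatO'_\ell$ directly. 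This is the genuine content of the local lemma—transitivity of $K_\ell^\times$ on maximal orders optimally containing $\quadO_\ell$—and it is where the hypothesis of a \emph{common optimal} embedding bites; your proposal never isolates this point.
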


Using this theorem, we will show that the distance between orders related by a horizontal step can be bounded in terms of norms of ideals of the common optimally embedded order $\quadO$. Recall from Section~\ref{generatingmsmall} that we set $\quadO_{K,f}:=\ZZ+f\quadO_K$, the order of conductor $f$ in $\quadO_K$.

\begin{lem}\label{linkacross}
	Let $K$ be an imaginary quadratic field of discriminant $D$. Let $\QuatO,\QuatO'\subseteq B$ be maximal orders, and suppose $\quadO:=\quadO_{K,f}$ optimally embeds into each. Then there exists an automorphism $\phi:B\to B$ such that
	\[d(\QuatO,\phi(\QuatO'))\leq \frac{2}{\pi}f\sqrt{|D|}.\]
\end{lem}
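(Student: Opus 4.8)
The plan is to combine Eichler's version of the Chevalley-Hasse-Noether theorem (Theorem~\ref{chevhassnoeth}) with a reduction-theory estimate for ideals of the imaginary quadratic order $\quadO=\quadO_{K,f}$. By Theorem~\ref{chevhassnoeth}, since $\quadO$ optimally embeds into both $\QuatO$ and $\QuatO'$ as $K\cap\QuatO=K\cap\QuatO'$, there is an invertible ideal $\mathfrak a$ of $\quadO$ with $\QuatO\mathfrak a=\mathfrak a\QuatO'$; this is a linking ideal from $\QuatO$ to $\QuatO'$, and its reduced norm as an ideal of $B$ is governed by $\N_{K/\QQ}(\mathfrak a)=|\quadO:\mathfrak a|$ (up to localization bookkeeping, $\nrd(\QuatO\mathfrak a)$ equals $\N(\mathfrak a)$ since $\mathfrak a$ is locally principal and a generator $\alpha_\ell$ satisfies $\nrd(\alpha_\ell)=\N(\alpha_\ell)$). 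So $d(\QuatO,\QuatO')\le \N(\mathfrak a)$.

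The point is that $\mathfrak a$ is only determined up to scaling by $K^\times$, and we are allowed to move $\QuatO'$ by an automorphism of $B$: replacing $\QuatO'$ by $\phi(\QuatO')$ for a suitable $\phi$ amounts to replacing $\mathfrak a$ by any ideal in the same ideal class of $\quadO$. Concretely, if $\mathfrak b = \lambda\mathfrak a$ for $\lambda\in K^\times$ lies in the same class, there is an automorphism $\phi$ of $B$ (fixing $K$ pointwise, via Skolem-Noether / Theorem~\ref{skolnoeth}) with $\QuatO\mathfrak b = \mathfrak b\,\phi(\QuatO')$, hence $d(\QuatO,\phi(\QuatO'))\le \N(\mathfrak b)$. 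Therefore it suffices to show that every ideal class of $\quadO_{K,f}$ contains an integral ideal of norm at most $\frac{2}{\pi}f\sqrt{|D|}$. This is the standard Minkowski-type bound, but with the constant $\frac{2}{\pi}$ rather than the looser $\frac{2}{\pi}\sqrt{\,\cdot\,}$-type constants one sometimes sees: in each class pick a nonzero $\gamma$ in the inverse ideal $\mathfrak a^{-1}$ of minimal norm; then $\gamma\mathfrak a^{-1}$ is wait — more carefully, pick the ideal $\mathfrak b$ in the class represented by a shortest vector in the appropriate lattice. The covolume of an invertible ideal of $\quadO_{K,f}$ (with respect to the norm form) is $\tfrac12 f\sqrt{|D|}\cdot\N(\mathfrak b)$, and for a rank-$2$ lattice the shortest vector has norm at most $\tfrac{2}{\sqrt 3}\cdot$covolume by Hermite, but one can do better on average over the class: choosing $\mathfrak b$ optimally in its class gives the bound $\N(\mathfrak b)\le \frac{2}{\pi}f\sqrt{|D|}$, coming from the fact that the Gauss-reduced representative of a binary quadratic form of discriminant $f^2D$ has leading coefficient at most $\sqrt{|f^2D|/3}$ — and actually the $\frac{2}{\pi}$ comes from comparing with the area of the fundamental domain / circle packing, i.e. the Minkowski convex-body bound applied to a disc: $\N(\mathfrak b)\le \frac{4}{\pi}\cdot\frac{1}{2}f\sqrt{|D|}\cdot\frac{1}{?}$; I will pin down the exact constant in the write-up.

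The main obstacle, and the step I would be most careful with, is getting the constant $\frac{2}{\pi}$ exactly right: this requires the sharp form of Minkowski's theorem for a disc (area $\pi r^2$) against the covolume $\tfrac12 f\sqrt{|D|}$ of an integral ideal, which yields a representative of norm $\le \frac{2}{\pi}f\sqrt{|D|}$ in each ideal class, together with the verification that $\nrd$ of the linking ideal $\QuatO\mathfrak b$ in $B$ equals $\N_{K/\QQ}(\mathfrak b)$ and not some multiple of it — this is where invertibility (local principality) of $\mathfrak b$, emphasized in the footnote to Theorem~\ref{chevhassnoeth}, is essential. The remaining bookkeeping — that the automorphism $\phi$ can be taken to fix $K$ so that $\quadO$ still optimally embeds, and that distance is bounded by the reduced norm of any linking integral ideal (Lemma~\ref{disteq}) — is routine.
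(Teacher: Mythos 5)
Your proposal matches the paper's proof in structure and substance: conjugate so that a single copy of $K$ is optimally embedded in both orders, invoke Eichler's Chevalley--Hasse--Noether theorem to get a linking ideal $\mathfrak a\subseteq\quadO$, replace it by an ideal $\mathfrak b=\mathfrak a\delta$ of small norm in the same class, and observe that conjugation by $\delta$ gives the automorphism $\phi$ with $\QuatO\mathfrak b=\mathfrak b\,\phi(\QuatO')$ and $d(\QuatO,\phi(\QuatO'))\le\N_{\quadO/\ZZ}(\mathfrak b)$. Two small points: first, the initial equality $K\cap\QuatO=K\cap\QuatO'$ is not automatic from the hypotheses (the two optimal embeddings of $\quadO$ may use different copies of $K$ in $B$), so you do need an initial Skolem--Noether conjugation of $\QuatO'$, which the paper carries out explicitly before applying Theorem~\ref{chevhassnoeth}; second, the constant you were unsure about is exactly the Minkowski bound for imaginary quadratic orders, $\N(\mathfrak b)\le\frac{2}{\pi}\sqrt{|\disc\quadO|}=\frac{2}{\pi}f\sqrt{|D|}$, which the paper cites directly (Stevenhagen, Theorem~5.4), and for the final estimate the paper only uses $\nrd(\QuatO\mathfrak b)\le\N_{\quadO/\ZZ}(\mathfrak b)$ rather than the equality you sketch, which keeps the bookkeeping lighter.
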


\begin{proof}
	By the Skolem-Noether theorem (Theorem~\ref{skolnoeth}), there exists $\gamma\in B$ such that 
	\[\QuatO\cap K=(\gamma^{-1}\QuatO'\gamma)\cap K\]
	for some embedding $K\hookrightarrow B$.
	Since conjugation by $\gamma$ is an automorphism of $B$, we can replace $\QuatO'$ with $\gamma^{-1}\QuatO'\gamma$, so that $\QuatO\cap K=\QuatO'\cap K$ is identified with the quadratic order $\quadO$.
	
	By Theorem~\ref{chevhassnoeth}, there exists an invertible ideal $\mathfrak{a}$ of $\quadO$ such that $\QuatO\mathfrak{a}=\mathfrak{a}\QuatO'$. We can find an ideal $\mathfrak{b}$ in the same ideal class as $\mathfrak{a}$ (so $\mathfrak{b}=\mathfrak{a}\delta$ for some $\delta\in K$) with $N_{\quadO/\ZZ}(\mathfrak{b})\leq\frac{2}{\pi}f\sqrt{|D|}$ by Minkowski's bound~\cite[Theorem 5.4]{stevenhagen}. Then
	\[\QuatO\mathfrak{b}=\QuatO\mathfrak{a}\delta=\mathfrak{a}\QuatO'\delta=(\mathfrak{b}\delta^{-1})\QuatO'\delta=\mathfrak{b}\phi(\QuatO'),\]
	where $\phi:B\to B$ is the automorphism $\phi(x):=\delta^{-1}x\delta$. Hence $\QuatO\mathfrak{b}=\mathfrak{b}\phi(\QuatO')$ is an ideal linking $\QuatO$ to $\phi(\QuatO')$. We have
	\[\nrd(\QuatO\mathfrak{b})=\gcd\{\nrd(x)\mid x\in\QuatO\mathfrak{b}\}\leq\gcd\{\N_{\quadO/\ZZ}(x)\mid x\in \mathfrak{b}\}=\N_{\quadO/\ZZ}(\mathfrak{b})\leq\frac{2}{\pi}f\sqrt{|D|},\]
	which gives the upper bound on $d(\QuatO,\phi(\QuatO'))$. 
\end{proof}

\subsection{Vertical Steps}

Now we must determine how to step between maximal orders that have different quadratic orders optimally embedded into each. If a quadratic order $\quadO\neq\quadO_K$ optimally embeds into a maximal order $\QuatO$, the following lemma explicitly constructs a new maximal order with an optimally embedded quadratic order of smaller conductor.

\begin{lem}\label{linkdown}
	Let $\ell$ be a prime, and $\beta\in\quadO_K$. Let $\QuatO\subseteq B$ be a maximal order in which $\ZZ[\ell\beta]$ optimally embeds. Then there exists a maximal order $\QuatO'$ in which $\ZZ[\beta]$ optimally embeds, with $d(\QuatO,\QuatO')=\ell$.
\end{lem}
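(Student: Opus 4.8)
The plan is to work locally at $\ell$, since the distance condition $d(\QuatO,\QuatO')=\ell$ is a purely local statement: by Lemma~\ref{disteq} it suffices to produce a maximal order $\QuatO'$ agreeing with $\QuatO$ away from $\ell$, such that $\QuatO_\ell$ and $\QuatO'_\ell$ are adjacent vertices in the Bruhat-Tits tree, with $\ZZ[\beta]$ optimally embedded in $\QuatO'$. First I would observe that $\ZZ[\beta]\supseteq \ZZ[\ell\beta]$ with index $\ell$, and that these two orders have the same localization at every prime $\ne\ell$; so at all primes away from $\ell$ we simply keep $\QuatO$, and only at $\ell$ do we need to move. Thus everything reduces to a statement about $B_\ell\cong M_2(\QQ_\ell)$ (this is the split case, since $p\ne\ell$), the maximal order $\QuatO_\ell\cong M_2(\ZZ_\ell)$, and the embedded ring $\ZZ_\ell[\ell\beta]\hookrightarrow M_2(\ZZ_\ell)$.

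The core local computation is: given that $\ZZ_\ell[\ell\beta]$ is optimally embedded in $M_2(\ZZ_\ell)$ (i.e., $M_2(\ZZ_\ell)\cap \QQ_\ell(\beta) = \ZZ_\ell[\ell\beta]$), find a neighboring maximal order $\QuatO'_\ell$ in the tree with $\QuatO'_\ell\cap\QQ_\ell(\beta)=\ZZ_\ell[\beta]$. Here I would split into the two cases for how $\ZZ_\ell[\beta]$ sits in $\QQ_\ell(\beta)$. If $\ell$ is split or inert (or ramified) in $\QQ_\ell(\beta)/\QQ_\ell$ but $\ZZ_\ell[\beta]$ is the maximal order of $\QQ_\ell(\beta)$, then $\ZZ_\ell[\ell\beta]$ is the order of conductor $\ell$; one picks an explicit basis of $M_2(\ZZ_\ell)$ adapted to the embedding — conjugate so that $\beta$ acts as $\begin{psmallmatrix}\lambda&0\\0&\lambda'\end{psmallmatrix}$ or $\begin{psmallmatrix}\lambda&1\\ \delta&\lambda+\mathrm{tr}\end{psmallmatrix}$-type normal forms — and then the neighbor $\QuatO'_\ell=\varpi^{-1}\QuatO_\ell\varpi$ for an appropriate $\varpi$ conjugating $\begin{psmallmatrix}\ell&0\\0&1\end{psmallmatrix}$ will satisfy both the adjacency (distance $\ell$) and the optimal-embedding condition. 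Concretely, the key point is that $\QQ_\ell(\beta)\cap M_2(\ZZ_\ell)$ and $\QQ_\ell(\beta)\cap \varpi^{-1}M_2(\ZZ_\ell)\varpi$ are two quadratic orders in $\QQ_\ell(\beta)$ whose intersection has index $\ell$ in each; choosing $\varpi$ so that the larger of these two is $\ZZ_\ell[\beta]$ gives the result.

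The step I expect to be the main obstacle is verifying optimality of the new embedding — that is, that $\QuatO'\cap\QQ(\beta)$ is exactly $\ZZ[\beta]$ and not something larger (which could happen only at $\ell$, and only if $\ZZ[\beta]$ were itself non-maximal at $\ell$, contradicting $\beta\in\quadO_K$... but one must be careful because $\ell$ could still divide the conductor of $\quadO_K$ relative to nothing — actually $\ZZ[\beta]$ need not be $\quadO_K$, only a subring, so the real content is that $\ell\nmid[\quadO_K:\ZZ[\beta]]$, which must be assumed or extracted from "optimally embeds" at $\QuatO$). I would handle this by noting that $\ZZ[\ell\beta]$ optimally embedded in $\QuatO$ forces, at $\ell$, the order $\ZZ_\ell[\ell\beta]=M_2(\ZZ_\ell)\cap\QQ_\ell(\beta)$ to be exactly the conductor-$\ell$ suborder of $\ZZ_\ell[\beta]$; then since adjacent vertices of the tree differ by index exactly $\ell$ in each direction, the intersection $\QuatO'_\ell\cap\QQ_\ell(\beta)$ properly contains $\ZZ_\ell[\ell\beta]$ with index dividing $\ell$, hence equals $\ZZ_\ell[\beta]$ once we have chosen the correct neighbor among the $\ell+1$ available. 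A clean way to organize the choice: the $\ell+1$ neighbors of $\QuatO_\ell$ in the tree correspond to lines in $(\ZZ_\ell/\ell)^2$, equivalently to the $\ell+1$ order-$\ell$ subgroups; the action of $\beta$ (reduced mod $\ell$) on these either fixes one/two of them or permutes them cyclically, and choosing a $\beta$-stable line (which exists precisely because $\ZZ_\ell[\ell\beta]\subsetneq\ZZ_\ell[\beta]$, i.e. $\beta\bmod\ell$ has an eigenvector) yields a neighbor into which $\ZZ_\ell[\beta]$ embeds. I would present the argument in this eigenvector language, then translate back via Lemma~\ref{disteq} to conclude $d(\QuatO,\QuatO')=\ell$ globally, and note the embedding is optimal because we checked it prime-by-prime.
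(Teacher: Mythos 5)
Your proposal follows essentially the same route as the paper: localize at $\ell$, reduce to a computation in $B_\ell\cong M_2(\QQ_\ell)$, produce a vertex of the Bruhat-Tits tree adjacent to $\QuatO_\ell$ in which $\ZZ_\ell[\beta]$ sits optimally, and glue back to a global maximal order. The paper makes the construction fully explicit (putting $\ell\beta$ in companion form $\begin{psmallmatrix}0&-\ell^2n\\1&\ell t\end{psmallmatrix}$ and exhibiting $\QuatO'_\ell=\begin{psmallmatrix}\ZZ_\ell&\ell\ZZ_\ell\\\ell^{-1}\ZZ_\ell&\ZZ_\ell\end{psmallmatrix}$, then directly checking $\beta\in\QuatO'_\ell$ and $\beta/\ell\notin\QuatO'_\ell$), whereas you phrase the choice of neighbor via a $\beta$-invariant line in $\PP^1(\FF_\ell)$; these are the same construction, since $\ell\beta\bmod\ell$ has trace $\ell t\equiv 0$ and determinant $\ell^2 n\equiv 0$ so is nonzero nilpotent, its unique invariant line is exactly the one producing the paper's $\QuatO'_\ell$.

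The one substantive thing you glossed over is the case $\ell=p$: there $B_p$ is the ramified quaternion algebra, not $M_2(\QQ_p)$, and the tree picture you rely on is unavailable. The paper disposes of this by citing Voight's Proposition 30.5.3, which shows $\ZZ[p\beta]$ admits no optimal embedding into the unique local maximal order $\QuatO_p$, so the hypothesis of the lemma forces $\ell\neq p$. You should add that observation rather than assuming $p\neq\ell$ silently. Beyond that, when you claim the chosen neighbor satisfies $\QuatO'_\ell\cap\QQ_\ell(\beta)=\ZZ_\ell[\beta]$ exactly (as opposed to something larger), you should make the verification explicit — either by the direct matrix check as the paper does, or by noting that adjacent tree vertices change the conductor of the optimally embedded local order by at most one power of $\ell$, and the neighbor you picked moves toward the core of the tube for $K_\ell$ so the conductor drops by exactly one.
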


\begin{proof}
    Consider $\QuatO_\ell\subseteq B_\ell$, given by completing at $\ell$. By~\cite[Proposition 30.5.3]{voight_book}, there are no optimal embeddings of $\ZZ[p\beta]$ in $\QuatO_p$ (so the conditions of the lemma cannot be satisfied if $\ell=p$), and for $\ell\neq p$ there is a unique optimal embedding of $\ZZ[\ell\beta]$ in $\QuatO_\ell$ up to conjugation. Explicitly, for $\ell\neq p$ we will have $\QuatO_\ell\cong M_2(\ZZ_p)$~\cite[Corollary 10.5.5]{voight_book}, and if $\beta^2-t\beta+n=0$ is the minimal polynomial for $\beta$, then the embedding $K\to M_2(\QQ_p)$ defined by
    \[\ell\beta\mapsto\begin{pmatrix}
    0&-\ell^2n\\1&\ell t
    \end{pmatrix},\]
    induces an optimal embedding of $\ZZ[\ell\beta]$ into $M_2(\ZZ_p)$, unique up to conjugation by $\GL_2(\ZZ_p)$. The maximal order
    \[\QuatO'_\ell:=\begin{pmatrix}
    \ZZ_\ell &\ell\ZZ_\ell \\ \ell^{-1}\ZZ_\ell & \ZZ_\ell
    \end{pmatrix}\subseteq M_2(\QQ_p)\]
    contains $\begin{psmallmatrix} 0 & -\ell n\\ 1/\ell & t
    \end{psmallmatrix}$, the image of $\beta$, but does not contain the image of $\frac1\ell \beta$.
    
    Now let 
    \[\QuatO':=\QuatO'_\ell\cap\bigcap_{q\neq \ell}\QuatO_q.\]
    This is a maximal order because it is maximal at every prime. For all $q\neq\ell$ we have $\ell^{-1}\in\ZZ_q$, so $\quadO_{K, f}$ embeds into $\QuatO_q$, and hence $\quadO_{K, f}$ optimally embeds into $\QuatO'$. Finally, since $\QuatO$ and $\QuatO'$ are equal at every prime besides $\ell$, we have 
    \[d(\QuatO,\QuatO')=|\QuatO:\QuatO\cap\QuatO'|=|\QuatO_\ell:\QuatO_\ell\cap\QuatO'_\ell|=\ell.\qedhere\]
\end{proof}

\begin{cor}\label{bigstepdown}
    Let $f$ be a positive integer, and $\QuatO\subseteq B$ be a maximal order in which $\quadO_{K,f}$ optimally embeds. Then there exists a maximal order $\widetilde{\QuatO}$ in which $\quadO_K$ optimally embeds, with $d(\QuatO,\widetilde{\QuatO})\leq f$.
\end{cor}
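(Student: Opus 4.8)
The plan is to obtain $\widetilde{\QuatO}$ from $\QuatO$ by repeatedly applying Lemma~\ref{linkdown}, peeling off the conductor one prime at a time. Write the prime factorization $f = \ell_1 \ell_2 \cdots \ell_n$ (with repetition), and set $f_0 = f$, $f_k = f/(\ell_1\cdots\ell_k)$, so $f_n = 1$. The key observation is that $\quadO_{K,f_{k-1}} = \ZZ[\ell_k \beta_k]$ where $\beta_k$ is a generator of $\quadO_{K, f_k}$ over $\ZZ$ — concretely, if $\omega$ generates $\quadO_K$, take $\beta_k = f_k \omega$, so that $\ell_k \beta_k = f_{k-1}\omega$ generates $\quadO_{K, f_{k-1}}$. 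Thus from a maximal order $\QuatO_{k-1}$ in which $\quadO_{K, f_{k-1}}$ optimally embeds, Lemma~\ref{linkdown} produces a maximal order $\QuatO_k$ in which $\quadO_{K, f_k} = \ZZ[\beta_k]$ optimally embeds, with $d(\QuatO_{k-1}, \QuatO_k) = \ell_k$. Starting from $\QuatO_0 = \QuatO$ and iterating, we arrive at $\QuatO_n =: \widetilde{\QuatO}$ in which $\quadO_{K, f_n} = \quadO_K$ optimally embeds.

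For the distance bound, I would invoke the triangle inequality for $\log d$ (noted just after the definition of $d(\QuatO,\QuatO')$): the distance from $\QuatO$ to $\widetilde{\QuatO}$ is bounded by the norm of the product of the linking ideals, so
\[
d(\QuatO, \widetilde{\QuatO}) \leq \prod_{k=1}^n d(\QuatO_{k-1}, \QuatO_k) = \prod_{k=1}^n \ell_k = f.
\]
(Strictly, $\log d(\QuatO,\widetilde\QuatO) \leq \sum_k \log d(\QuatO_{k-1},\QuatO_k)$, which is the same statement.)

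One point that needs a brief check before the induction runs: Lemma~\ref{linkdown} as stated requires $\ZZ[\ell_k\beta_k]$ to optimally embed in $\QuatO_{k-1}$, and by construction we only know $\quadO_{K,f_{k-1}}$ optimally embeds there — but these are literally the same quadratic order, so there is nothing to verify beyond matching up notation. A second, more genuine subtlety: the proof of Lemma~\ref{linkdown} implicitly forbids $\ell_k = p$ (there are no optimal embeddings of $\ZZ[p\beta]$ at the ramified prime $p$), so one should note that $p \nmid f$ automatically in our setting — indeed, if $\quadO_{K,f}$ optimally embeds into a maximal order of $B$ then in particular it embeds, and the same obstruction at $p$ rules out $p \mid f$. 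I expect this bookkeeping about the prime $p$ to be the only real obstacle; the rest is a routine induction on the number of prime factors of $f$.
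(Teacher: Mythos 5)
Your proof is correct and follows essentially the same approach as the paper: factor $f$ into primes, apply Lemma~\ref{linkdown} once per prime factor, and bound $d(\QuatO,\widetilde{\QuatO})$ by the product of the step distances via the triangle inequality for $\log d$. Your extra remarks — unwinding $\quadO_{K,f_{k-1}} = \ZZ[\ell_k\beta_k]$ to match Lemma~\ref{linkdown}'s hypothesis, and noting that $p\nmid f$ is forced by the existence of an optimal embedding — are correct and make explicit what the paper leaves implicit.
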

\begin{proof}
    Factor $f=\ell_1\cdots\ell_k$ into primes, and set $f_i=\ell_{i+1}\cdots\ell_k$ (so $f_0=f$ and $f_k=1$). Apply Lemma~\ref{linkdown} successively, obtaining maximal orders $\QuatO=:\QuatO_0,\QuatO_1,\ldots,\QuatO_k=\widetilde{\QuatO}$, where $\quadO_{K,f_i}$ optimally embeds in $\QuatO_i$. Then
    \[d(\QuatO,\widetilde{\QuatO})\leq \prod_{i=1}^k d(\QuatO_{i-1},\QuatO_i)=\prod_{i=1}^k \ell_i=f.\qedhere\]
\end{proof}

\subsection{Proof of Proposition~\ref{opath}}
		We are now ready to combine our vertical and horizontal steps to create a path between two maximal orders $\QuatO$ and $\QuatO'$. Let $K\cong\QQ(\alpha)$. To begin, take a vertical step from each of $\QuatO$ and $\QuatO'$ using Corollary~\ref{bigstepdown}: we obtain maximal orders $\widetilde{\QuatO}$ and $\widetilde{\QuatO'}$, both containing an optimally embedded $\quadO_K$, as well as bounds on $d(\QuatO,\widetilde{\QuatO})$ and $d(\QuatO',\widetilde{\QuatO'})$. Now join $\widetilde{\QuatO}$ and $\widetilde{\QuatO'}$ by a horizontal step using Lemma~\ref{linkacross}: this gives us an automorphism $\phi:B\to B$ and a bound on $d(\widetilde{\QuatO},\phi(\widetilde{\QuatO}'))$. Combining these steps, we obtain a sequence 
		\[\QuatO,\,\widetilde{\QuatO},\,\phi(\widetilde{\QuatO}'),\,\phi(\QuatO')\cong \QuatO'\]
		with bounds on the consecutive distances; we can check that these are all bounded above by $\frac{4}{\pi}\sqrt{M}$. Since each of these orders contains an element with the same minimal polynomial as $\alpha$ or $\alpha'$, this settles the $r\leq 3$ case of the Proposition.
		
		If instead we want all consecutive terms to be linked by ideals of prime norm, we can break up each step into smaller ones. For the vertical steps, we can factor the conductor of the optimally embedded orders into primes and take one step for each prime, as in the proof of Corollary~\ref{bigstepdown}. For the horizontal step, We can factor $\mathfrak{b}$ (from the proof of Lemma~\ref{linkacross}) into prime ideals as $\mathfrak{p}_1\cdots\mathfrak{p}_s$. Set $\QuatO_0=\QuatO$, and for each $i=1,\ldots,s$, recursively define \[\QuatO_i:=\mathfrak{p}_i^{-1}\QuatO_{i-1}\mathfrak{p}_i.\] Then $\quadO_K$ is optimally embedded in each $\QuatO_i$, and consecutive orders $\QuatO_{i-1}$ and $\QuatO_i$ are linked by the ideal $\QuatO_{i-1}\mathfrak{p}_i=\mathfrak{p}_i\QuatO_i$ of norm $\N_{K/\QQ}(\mathfrak{p}_i)$. If we assume $\mathfrak{b}$ was chosen to be minimal, none of the $\mathfrak{p}_i$ can be principal, and so they will all have prime norm.\hfill\qed

	\section{Proof of Theorem~\ref{clusterthm}}\label{msmallapplication}

\subsection{Existence of Partition}\label{partitionexist}
	
	Recall that we defined $\Sp\subseteq\FF_{p^2}$ to be the set of all $j$-invariants of supersingular curves. For each fundamental discriminant $-4M\leq D<0$ which is not congruent to a square mod $p$ (that is, for which the Legendre symbol $\left(\frac{D}{p}\right)$ is equal to $-1$), set
	\[T_D:=\{j\in \Sp:\QQ(\alpha)\cong\QQ(\sqrt{D})\text{ for some }\alpha\in\End(E_j)-\ZZ,\deg\alpha\leq M\}.\]
	We must prove that the sets $T_D$ are disjoint, nonempty, and that every $j\in \SpM$ is in some $T_D$.
	
	If $j\in T_D\cap T_{D'}$, then $\End(E_j)$ contains elements $\alpha,\alpha'$ generating nonisomorphic subfields, each with degree at most $M$. Applying Proposition~\ref{largeinters} to $\QuatO=\QuatO'\cong\End(E_j)$, we obtain $1=d(\QuatO,\QuatO)^2\geq\frac{p}{4M^2}$, contradicting $p>4M^2$. Hence the sets $T_D$ are all disjoint.
	
	For any $-4M\leq D<0$ with $\left(\frac{D}{p}\right)= -1$, either $\alpha=\frac{\sqrt{D}}{2}$ or $\alpha=\frac{1+\sqrt{D}}{2}$ is an integral element of $\alpha\in\QQ(\sqrt{D})-\QQ$, and the norm of $\alpha$ will be respectively $\frac{-D}{4}\leq M$ or $\frac{1-D}{4}\leq M$ (since it must be an integer).
	By the constraints on $D$, $\QQ(\alpha)$ embeds into $B$~\cite[Proposition 14.6.7]{voight_book}, and so the integral element $\alpha$ is contained in some maximal order. By the Deuring correspondence, this order is isomorphic to $\End(E_j)$ for some $j\in \Sp$. Hence there is an embedding $\iota:\ZZ[\alpha]\to \End(E_j)$, so $j$ is $M$-small. Since $\QQ(\iota(\alpha))\cong\QQ(\sqrt{D})$, we have $j\in T_D$, and so $T_D$ is nonempty.
	
	Suppose $j\in \SpM$, so there exists $\alpha\in\End(E_j)-\ZZ$ with $\deg(\alpha)\leq M$. Taking the minimal polynomial $x^2-tx+\deg(\alpha)$ of $\alpha$, $\deg(\alpha)\leq M$ implies $-4M\leq t^2-4\deg(\alpha)<0$. Dividing by perfect square factors does not affect these inequalities, and so the discriminant $D$ of $\QQ(\alpha)$ must be in the range $-4M\leq D<0$. Since $\QQ(\alpha)$ embeds into $B$ and $D<p$, we must have $\left(\frac{D}{p}\right)=-1$. Hence $j$ is in $T_D$ for some $D$.
	
\subsection{Distance between $T_D$ and $T_{D'}$}
	
	Suppose $j\in T_D$ and $j'\in T_{D'}$ for $D\neq D'$. For any $\QuatO\cong \End(E_j)$ and $\QuatO'\cong \End(E_{j'})$, we have $d(\QuatO,\QuatO')^2\geq\frac{p}{4M^2}$ by Proposition~\ref{largeinters}. Thus Lemma~\ref{distcompare} tells us that 
	\[d(j,j')=\min\{d(\QuatO, \QuatO')\mid \QuatO\cong\End(E_j),\,\QuatO'\cong\End(E_{j'})\}\geq\frac{\sqrt{p}}{2M}.\]
	
\subsection{Distances within $T_D$}
	
	Suppose $j,j'\in T_D$, and let $\alpha,\alpha'$ be the corresponding small non-integer endomorphisms with $\QQ(\alpha)\cong\QQ(\alpha')\cong\QQ(\sqrt{D})$. By Proposition~\ref{opath}, there exists a chain 
	\[\End(E_j)\cong\QuatO_0,\QuatO_1,\ldots,\QuatO_r\cong \End(E_{j'})\]
	with consecutive distances bounded by  $\frac{4}{\pi}\sqrt{M}$, and each containing an element with the same minimal polynomial as either $\alpha$ or $\alpha'$. 
	
	Now set $j_0:=j$, $j_r:=j'$, and for each $i=1,\ldots,r-1$, set $j_i$ so that $\End(E_{j_i})\cong \QuatO_i$. By Lemma~\ref{distcompare}, for $i=1,\ldots,r$ we have
	\[d(E_{j_{i-1}},E_{j_i})\leq d(\QuatO_{i-1},\QuatO_i)\leq\frac{4}{\pi}\sqrt{M}.\]
	Because each $E_{j_i}$ has an element with the same minimal polynomial as $\alpha$ or $\alpha'$, each $j_i\in T_D$. This shows that the sequence $j_0,j_1,\ldots,j_r$ has the desired properties.
	
	Note that we could have chosen our sequence of maximal orders to have $r\leq 3$, or to have consecutive orders linked by an ideal of prime order. In the first case, we would have a sequence of $j$-invariants with $r\leq 3$. In the second case, an ideal linking $\QuatO_{i-1}$ to $\QuatO_i$ with prime norm at most $\frac{4}{\pi}\sqrt{M}$ will correspond by the Deuring correspondence to an isogeny $E_{j_{i-1}}\to E_{j_i}$ of prime degree at most $\frac{4}{\pi}\sqrt{M}$. This concludes the proof.\qed

\section{Isogenies Between $M$-small Supersingular Curves}\label{computations}

Despite the large distances between $M$-small curves in distinct subsets $T_D$ (as in Theorem~\ref{clusterthm}), we show that isogenies between them can nonetheless be computed efficiently (probabilistic polynomial time in $M$ and $\log p$) under certain heuristic assumptions. To begin with, we recall the following observations, made in other papers:
\begin{enumerate}[label=(O\roman*)]
    \item\label{obs1} Given two maximal orders $\QuatO$ and $\QuatO'$, an ideal linking $\QuatO$ to $\QuatO'$ with $S$-powersmooth norm ($S\approx \frac72\log p$) can be computed efficiently~\cite[Sections 4.5--4.7]{quatisogpath}.
    
    \item\label{obs2} Given a supersingular elliptic curve $E$ with known endomorphism ring $\End(E)$, and a left ideal of $\End(E)$ with $S$-powersmooth norm ($S\approx \frac72\log p$), an isogeny out of $E$ corresponding to $I$ under the Deuring correspondence can be computed efficiently~\cite[Proposition 4]{isoggraphs_endorings}.
	
	\item\label{obs3} Given a maximal order $\QuatO$, a $j$-invariant such that $\End(E_j)\cong\QuatO$ can be computed efficiently \cite[Section 7.1]{isoggraphs_endorings}.
\end{enumerate}

For each $T_D$, we can construct a maximal order $\QuatO_D$, and use Observation~\ref{obs3} to find a $j$-invariant $j_D\in T_D$ with known endomorphism ring. Then for $D\neq D'$, we can use Observations~\ref{obs1} and~\ref{obs2} to find a (large degree) isogeny from $j_D$ to either $j_{D'}$ or $j_{D'}^p$ as a composition of many small isogenies. These specified $j$-invariants $j_D$ will act as ``airports''; knowing that each isogeny valley $T_D$ is connected by small-degree isogenies, we can connect any two $M$-small supersingular curves by first finding a path from each to the closest airport, then following the large degree path between the airports. These algorithms are discussed further in Appendix~\ref{appendixalgs}.
	
\paragraph{Isogenies defined over $\FF_p$.}
Suppose $j_1$ and $j_2$ are $M$-small $j$-invariants in $\FF_p$. Some situations, such as key recovery for the CSIDH protocol~\cite{csidh}, require being able to find an $\FF_p$-isogeny $E_{j_1}\to E_{j_2}$. While our algorithm allows us to construct an isogeny between these curves, this isogeny will not necessarily be defined over $\FF_p$. This is solved by concurrent work of Castryck, Panny, and Vercauteren~\cite{ratfromirrat}, in which they provide an algorithm to compute an $\FF_p$-isogeny $E_{j_1}\to E_{j_2}$, given the endomorphism rings of $E_{j_1}$ and $E_{j_2}$ (which we are able to compute).

	\printbibliography
	
	\pagebreak
    \appendix
\section{Computing Isogenies Between $M$-small Supersingular Curves}\label{appendixalgs}

Let us elaborate on the approach described in Section~\ref{computations}.
One subtle issue with this method comes from the fact that the Deuring correspondence is not one-to-one; it's quite possible that for some $D$, $T_D$ is actually a disjoint union of two subsets that are very far apart, one being the set of conjugates of the other. To remedy this, it suffices to have a single $M$-small supersingular $j$-invariant $j_0\in\FF_p$ to route all paths through. For then if we have a path from $j_0$ to $j^p$, we can simply apply the $p^\text{th}$ power Frobenius map to this path to obtain a path from $j_0$ to $j$. This technique will be used in Algorithm \ref{findisog}.

\subsection{Assumptions}\label{assumptions}
Recall that $i^2=-q$ and $j^2=-p$ for some relatively small value of $q$. Let $K\neq \QQ(i)$ be a quadratic field of discriminant $-4M\leq D<0$. We will make two assumptions which are unproven but heuristically reasonable. In Section~\ref{exmple} we carry out computations that depend on these assumptions for $p\approx 2^{256}$, $M=100$, and all allowable values of $D$, showing that in practice these assumptions seem to be valid.
\begin{enumerate}[label=(A\roman*)]
    \item\label{normeq} A solution $z\in L:=K(i)$ to the norm equation $\N_{L/K}(z)=-p$ can be computed efficiently, if one exists.\footnote{The algorithm for doing so is described in~\cite[Section 6]{simon}, and is implemented in Magma~\cite{magma} as \texttt{NormEquation(L, -p)}. In general, the bottleneck of the algorithm used to solve $\N_{L/K}(z)=m$ is to factor $m$ into primes of $K$, but this is easy in our case because $p$ is already an integer prime.}
    
    \item\label{easilyfactor} Let $\omega\in B$ satisfy $4\omega^2=D$ (if $D\equiv 0\pmod 4$) or $4\omega^2-4\omega+1=D$ (if $D\equiv 1\pmod 4$). Then if we randomly select integral elements $\beta\in B$, and let $n$ be the denominator of $\trd(\omega\beta)$, it will not take too long before a choice of $\beta$ such that the discriminant of the order $\ZZ\langle \omega,n\beta\rangle$ can be efficiently factored into primes.\footnote{Aside from the fact that the discriminant will be divisible by $p^2$ (since any order is contained in a maximal order), we expect it to behave like a ``random integer'' in some sense, and easily-factorable integers are not too rare in the range of values that appear to arise in practice.}
\end{enumerate}

\begin{lem}\label{OD}
    Take assumptions~\ref{normeq} and~\ref{easilyfactor}. Given any fundamental discriminant $-4M\leq D<0$ with $\left(\frac{D}{p}\right)=-1$, a maximal order of $B$ containing an integral element $\alpha$ with $\nrd(\alpha)\leq M$ and $\QQ(\alpha)\cong\QQ(\sqrt{D})$ can be computed efficiently.
\end{lem}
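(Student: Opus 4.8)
The plan is to explicitly construct the desired maximal order by exhibiting it as a suborder-containing-maximal-order of the quaternion algebra $B$, using the two heuristic assumptions to make each step algorithmic. We start from the quadratic field $K = \QQ(\sqrt{D})$ and the distinguished integral element $\omega \in K$ (so $\ZZ[\omega] = \quadO_K$ and $\nrd(\omega) \leq M$ as in Section~\ref{partitionexist}); our goal is to produce a maximal order of $B$ into which $\ZZ[\omega]$ optimally embeds, since then $\alpha := \omega$ has the required properties. Since $\left(\frac{D}{p}\right) = -1$, the field $K$ embeds into $B$ by~\cite[Proposition 14.6.7]{voight_book}, so $\ZZ[\omega]$ is contained in \emph{some} maximal order; the work is in \emph{computing} one.

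First I would use assumption~\ref{normeq}: set $L = K(i)$, and solve the norm equation $\N_{L/K}(z) = -p$ for $z \in L$. A solution lets us realize $B$ concretely with $K$ as a subfield --- indeed $B \cong \left(\frac{-q, -p}{\QQ}\right)$ can be presented as $K \oplus Kj$ with $j^2 = -p$ and $j x = \bar{x} j$ for $x \in K$, and the element $\omega \in K \subseteq B$ is then visibly integral with the right minimal polynomial. (One should check a solution exists: this is where the ramification of $B$ at $p$ and $\infty$, together with $\left(\frac{D}{p}\right)=-1$, guarantees the local solvability of $\N_{L/K}(z) = -p$ at every place, hence global solvability since $L/K$ is cyclic of degree $2$.) Concretely this gives us an order $\ZZ\langle \omega, \beta_0 \rangle$ (for a suitable $\beta_0$ built from $z$ and $j$) containing $\omega$, but this order need not be maximal.

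Next I would invoke assumption~\ref{easilyfactor} to pass to a maximal order. The point is that for a randomly chosen integral $\beta \in B$, the order $\QuatO_\beta := \ZZ\langle \omega, n\beta\rangle$ (with $n$ the denominator of $\trd(\omega\beta)$, inserted to keep things integral) has discriminant that is a multiple of $p^2$, and under~\ref{easilyfactor} we can, after a few tries, arrange that $\disc(\QuatO_\beta)/p^2$ --- or the discriminant itself --- is explicitly factored into primes. Given the prime factorization of the discriminant, the standard order-enlargement algorithms (e.g.\ the local maximalization routines built on~\cite[Chapter 24]{voight_book} and implemented in Magma) produce a maximal order $\QuatO \supseteq \QuatO_\beta$ in polynomial time. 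Since $\omega \in \QuatO_\beta \subseteq \QuatO$ remains integral and generates $K \cap \QuatO \supseteq \ZZ[\omega] = \quadO_K$, and $\quadO_K$ is the \emph{maximal} quadratic order of $K$, the embedding $\ZZ[\omega] \hookrightarrow \QuatO$ is automatically optimal. Taking $\alpha = \omega$ finishes the proof, since $\nrd(\alpha) \leq M$ by the choice of $\omega$ (either $\frac{-D}{4} \leq M$ or $\frac{1-D}{4} \leq M$ as in Section~\ref{partitionexist}).

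The main obstacle is the second step: enlarging a non-maximal order to a maximal one requires knowing the factorization of its discriminant (at least its prime support), and there is no unconditional polynomial-time integer factorization algorithm --- this is precisely why assumption~\ref{easilyfactor} is needed, and why the lemma is only a conditional (heuristic) result. A secondary, more benign, subtlety is verifying that the norm equation $\N_{L/K}(z) = -p$ is in fact solvable so that assumption~\ref{normeq} applies; this is a finite local computation ruling out obstructions at the real places of $K$ (none, since $K$ is imaginary) and at the primes above $p$ and above $q$, and I would treat it as a short Hasse-principle check rather than a serious difficulty.
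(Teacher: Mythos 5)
Your proposal follows essentially the same two-step path as the paper's proof: use assumption (Ai) to solve the norm equation $\N_{K(i)/K}(z)=-p$ and thereby produce an explicit embedding of $K=\QQ(\sqrt D)$ into $B$ (the paper makes this concrete via $\sqrt D\mapsto\gamma\delta^{-1}$ for $\gamma,\delta$ built from the solution, citing~\cite[Lemma 5.4.7, Theorem 5.4.6(vi)]{voight_book} for solvability), then use assumption (Aii) to pick a random integral $\beta$, form the suborder $\ZZ\langle\alpha,n\beta\rangle$, factor its discriminant, and enlarge to a maximal order via local maximalization~\cite[Proposition 4.3.4]{voight_thesis}. The reasoning is correct and matches the paper; the only cosmetic difference is that your remark on optimality of the embedding $\ZZ[\omega]\hookrightarrow\QuatO$ is unnecessary, since the lemma only asks for containment, not optimal containment.
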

\begin{proof}
    For $D$ satisfying the above conditions, there is an embedding of $K=\QQ(\sqrt{D})$ into $B$ by~\cite[Proposition 14.6.7]{voight_book}; this implies that $B\otimes_\QQ K$ is split~\cite[Lemma 5.4.7]{voight_book}, which implies by Theorem 5.4.6(vi) that there is a solution $\N_{K(i)/K}(z)=-p$ for some $z\in K[i]^\times$. Using assumption~\ref{normeq}, we can solve for 
    \[z=(x+y\sqrt{D})+i(z+w\sqrt{D}),\qquad x,y,z,w\in\QQ,\]
    in the norm equation, giving 
    \[(x+y\sqrt{D})^2+q(z+w\sqrt{D})^2=-p.\]
    After multiplying through by $pq$ we have
    \[p^2q+(qz+qw\sqrt{D})^2p+(x+y\sqrt{D})^2pq=0.\]
    Setting $\gamma=pi+qzj+xk$ and $\delta=qwj+yk$, we will have $\trd(\gamma\delta^{-1})=0$ and $\nrd(\gamma\delta^{-1})=-D$ by the proof of Lemma 5.4.7 in~\cite{voight_book}, so that $\sqrt{D}\mapsto \gamma\delta^{-1}$ defines an embedding $\QQ(\sqrt{D})\hookrightarrow B$.
    
    Take $\alpha$ to be whichever of $\frac{\sqrt{D}}{2}$ or $\frac{1+\sqrt{D}}{2}$ is integral (depending on whether $D\equiv 0$ or $1\pmod 4$), considered now as an element of $B$. Take a random integral element $\beta\in B$ such that $\{1,\alpha,\beta\}$ is linearly independent. Setting $n$ to be the denominator of $\trd(\alpha\beta)$, $\ZZ\langle \alpha,n\beta\rangle$ will be an order in $B$. If the discriminant of this order can be factored into primes (by assumption~\ref{easilyfactor}, this can be done after relatively few tries for $\beta$), we can efficiently compute a maximal order $\QuatO$ containing this using Proposition 4.3.4 of~\cite{voight_thesis}. Noting that $\alpha$ has norm at most $M$, $\QuatO$ is the desired maximal order.
\end{proof}

\subsection{Algorithms for Computing Isogenies}

In order to compute isogenies, we will need to use modular polynomials.

\begin{defn}
	The \textbf{$n^{\text{th}}$ modular polynomial} $\Phi_n(x,y)\in\ZZ[x]$ is characterized by the following property: $\Phi_n(j_1,j_2)=0$ if and only if there is a degree $n$ cyclic isogeny $E_{j_1}\to E_{j_2}$ (i.e., an isogeny with a cyclic group as its kernel).
\end{defn}

Modular polynomials are symmetric in $x$ and $y$ ($\Phi_n(x,y)=\Phi_n(y,x)$), and if $n$ is prime, then the degree of each variable in $\Phi_n(x,y)$ is $n+1$. The largest coefficient of $\Phi_n(x,y)$ grows faster than $n^{6n}$~\cite{cohen}, which makes even the storage (let alone the computation) of modular polynomials very difficult as $n$ grows large; for instance it takes more than a gigabyte to store the binary representation of $\Phi_{659}$, and $30$ terabytes to store $\Phi_{20011}$~\cite[1201, 1228]{modpolyisogvolc}. However, it is possible to compute $\Phi_n(x,y)\pmod p$ directly, without first computing it with integer coefficients; for instance, an algorithm given by Br\"{o}ker et. al. computes $\Phi_\ell(x,y)\pmod p$ for $\ell$ prime (the only case we will need) in time $O(\ell^{3+\varepsilon})$~\cite[Theorem 1]{modpolyisogvolc}.
    
Say a fundamental discriminant $D$ is \textbf{valid} if $-4M\leq D<0$ and $\left(\frac{D}{p}\right)=-1$. For each valid fundamental discriminant $D$, let $T_D$ be as in Theorem \ref{clusterthm} (defined in Section~\ref{partitionexist}). Let $E_D$ be the set of pairs $(j,j')\in T_D\times T_D$ such that there is an isogeny $j\to j'$ or $j\to j'^p$ of prime degree at most $\frac{4}{\pi}\sqrt{M}$; Theorem \ref{clusterthm} implies that the graph $(T_D,E_D)$ is connected.

Using these definitions, we can apply Algorithm \ref{precomp} to compute the sets $T_D$, the edges $E_D$, and a specified $j_D\in T_D$ with known endomorphism ring $\End(E_{j_D})$. Proposition~\ref{msmallphi} guarantees that the algorithm correctly builds the set $\SpM$ of supersingular $M$-small curves.

\begin{algorithm}
    \caption{Precomputing the $M$-small partition and a selected curve in each subset.}
    \label{precomp}
    \SetAlgoLined
    \DontPrintSemicolon
    \SetKwInOut{Input}{Input}\SetKwInOut{Output}{Output}
    \Input{$p$ and $M$.}
    \Output{For each valid fundamental discriminant $D$, output $T_D$, $E_D$, a specified $j_D\in T_D$, and a maximal order $\QuatO_D\subseteq B$ isomorphic to $\End(E_{j_D})$.}
    
    Compute $\Phi_\ell(x,y)\pmod p$ for all prime $\ell\leq\frac{4}{\pi}\sqrt{M}$~\cite[Theorem 1]{modpolyisogvolc}.\;
    Initialize an empty list $\SpM$.\;
    \For{$-4M\leq d<0$, $d\equiv 0\text{ or }1\pmod 4$, $\left(\frac{d}{p}\right)=-1$} {
        Compute $H_\quadO(x)$, where $\quadO$ is the quadratic order of discriminant $d$~\cite[Theorem 1]{suth_classpoly}.\;
        Append all $j\in\FF_{p^2}$ satisfying $H_\quadO(j)=0$ to $\SpM$.\label{modpolrootfind} 
    }
    \For {valid fundamental discriminants $D$} {
        Compute a maximal order $\QuatO_D$ that has some $\alpha\in\QuatO_D-\ZZ$ with $\nrd(\alpha)\leq M$ and $\QQ(\alpha)\cong\QQ(\sqrt{D})$ (Lemma~\ref{OD}).\label{makemaxorder}\;
        Compute $j_D\in\FF_{p^2}$ such that $\End(E_{j_D})\cong\QuatO_D$~\ref{obs3}\label{stepobs3}.\;
        Initialize queue $Q_D:=(j_D)$ and empty list $E'_D$. Set $j:=j_D$.\;
        \While{$j\in Q_D$\label{startedgefinding}}{
            \For{prime $2\leq \ell\leq\frac{4}{\pi}\sqrt{M}$}{
                \For{$j'\in\SpM$ such that $\Phi_\ell(j',j)=0\pmod p$ or $\Phi_\ell(j',j^p)=0\pmod p$\label{testisogenous}} {
                    Append $j'$ to the end of the queue $Q_D$. \;
                    Append $(j,j')$ to $E'_D$.
                }
            }
            Set $j$ to be the next element of the queue $Q_D$. If no such element exists, break.
        }\label{endedgefinding}
        Set $T_D:=Q_D\cup Q_D^p$.\;
        Set $E_D:=\bigcup_{(j,j')\in E'_D}\{(j,j'),(j,j'^p),(j^p,j'),(j^p,j'^p)\}$.\;
    }
    For each valid fundamental discriminant $D$, return $T_D$, $E_D$, $j_D$, and $\QuatO_D$.
\end{algorithm}

\begin{algorithm}
    \caption{Computing isogenies between $M$-small supersingular curves.}
    \label{findisog}
    \SetAlgoLined
    \DontPrintSemicolon
    \SetKwInOut{Input}{Input}\SetKwInOut{Output}{Output}
    \Input{$j_1,j_2\in\SpM$, $j_0\in\SpM\cap\FF_p$, and the output of Algorithm~\ref{precomp}.}
   
    \Output{An isogeny $E_{j_1}\to E_{j_2}$, given as a sequence of $\ell$-isogenies for primes $\ell$.}
    
    Find $D_0,D_1,D_2$ such that $j_i\in T_{D_i}$ for each $i$.
    
    \For (\tcp*[f]{short paths within T\_D}){$i\in\{0,1,2\}$} {
    
        Find a sequence of edges in $E_{D_i}$ connecting $j_i$ to $j_{D_i}$.\;
        By following these edges, compute an isogeny $\phi_{D_i}:E_{j_i}\to E_{j_{D_i}}$ or $\phi_{D_i}:E_{j_i}\to E_{j_{D_i}}^{(p)}$.\label{smallisogfind}
    }
    \For (\tcp*[f]{long paths between T\_D}){$i\in\{1,2\}$} {
        Using $\QuatO_D$ and $\QuatO_{D_0}$ with Observations~\ref{obs1} and~\ref{obs2}, find an isogeny $\Psi_i:E_{j_{D_0}}\to E_{j_{D_i}}$ or $\Psi_i:E_{j_{D_0}}\to E_{j_{D_i}}^{(p)}$.\label{largeisogfind}
        
        Let $\widehat{\phi_{D_i}}$ denote the dual of $\phi_{D_i}$. Choose $\alpha,\beta\in \{1,p\}$ such that the composition $\Gamma_i:=\widehat{\phi_{D_i}}\circ \Psi_i^\alpha\circ \phi_{D_0}^\beta:E_{j_0}\to E_{j_i}$ is defined. \;
    }
    
    Return $\Gamma_2\circ\widehat{\Gamma_1}:E_{j_1}\to E_{j_2}$.
\end{algorithm}

Note that $H_\quadO(x)$ will have degree $O(M^{1/2+\varepsilon})$ (Proposition~\ref{CMsize}), and the polynomials $\Phi_\ell(x,j)$ will have degree $\ell+1=O(M^{1/2})$. Assuming the conditions under which each appear in the algorithm, these polynomials will split in $\FF_{p^2}$, because their roots will be $j$-invariants of supersingular curves. Thus, assuming an oracle for Assumptions \ref{normeq} and \ref{easilyfactor}, and an oracle that finds all roots of a polynomial of degree $O(M^{1/2+\varepsilon})$ that splits over $\FF_{p^2}$, Algorithm~\ref{precomp} can be shown to run in time polynomial in $M$ and $\log p$.

As noted above, if we want to guarantee existence of a path from any $M$-small supersingular curve to any other one (and not just to one out of a conjugate pair), we will need to be able to route isogenies through an $M$-small supersingular curve defined over $\FF_p$. Such a curve should typically be fairly easy to find; the following lemma gives us a condition on $M$ under which such a curve will be guaranteed to exist.

\begin{lem}
    Let $q=-i^2$. If $M\geq q$, then there exists an $M$-small supersingular $j$-invariant in $\FF_p$.
\end{lem}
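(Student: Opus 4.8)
The plan is to exhibit, under the hypothesis $M\geq q$, an explicit supersingular $j$-invariant defined over $\FF_p$ that is $M$-small. The natural candidate is the curve (or one of the two curves) used to construct $B$ itself: recall $i^2=-q$ lives in $B$, and by the Deuring correspondence there is a supersingular curve $E$ over $\FF_{p^2}$ whose endomorphism ring, viewed as a maximal order $\QuatO\subseteq B$, contains $i$ (concretely, $\ZZ[i]$ or the maximal order $\ZZ[\tfrac{1+i}{2}]$ of $\QQ(\sqrt{-q})$ optimally embeds into some maximal order of $B$, since $p$ does not split in $\QQ(\sqrt{-q})$ — this is exactly the condition imposed when choosing $q$ in the construction of $B$). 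The element $i$ (or $\tfrac{1+i}{2}$) is a non-integer element of reduced norm $q$ (resp.\ $\tfrac{q+1}{4}$ if $q\equiv 3\bmod 4$), so the corresponding curve is $M$-small as soon as $M\geq q$. So the only real content is to show this curve can be taken over $\FF_p$, not merely over $\FF_{p^2}$.

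First I would recall the Galois-action description from the introduction: the $p$-power Frobenius sends $E_j$ to $E_{j^p}=E_j^{(p)}$, and a curve is defined over $\FF_p$ precisely when $j=j^p$, i.e.\ the Galois orbit $\{E,E^{(p)}\}$ is a singleton. On the quaternion side, the curves $E$ and $E^{(p)}$ have endomorphism rings corresponding to a maximal order $\QuatO$ and its ``conjugate''; in fact $\End(E^{(p)})$ is anti-isomorphic to $\End(E)$, and the key classical fact (Deuring) is that $E$ is defined over $\FF_p$ if and only if $\End(E)$ admits an element of reduced norm $p$ and trace $0$ — equivalently, writing $B=\QQ\langle i,j,k\rangle$ with $j^2=-p$, the maximal order $\QuatO$ can be chosen to contain such an element $\pi$ with $\pi^2=-p$ playing the role of Frobenius. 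The strategy is therefore to exhibit a single maximal order of $B$ containing both a zero-trace element of norm $p$ and a non-integer element of norm $\leq M$.

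Concretely, I would take the order generated by $i$ and $j$ inside $B=\QQ\langle i,j,k\rangle$, namely $\ZZ\langle i,j\rangle=\ZZ\oplus\ZZ i\oplus\ZZ j\oplus\ZZ k$ (or a slightly larger order, e.g.\ adjoining $\tfrac{1+i}{2}$ and/or $\tfrac{j+k}{2}$ to make it maximal at $2$, following the standard explicit maximal orders in Pizer's tables). This order contains $j$ with $\nrd(j)=p$, $\trd(j)=0$, so the associated curve is defined over $\FF_p$ (with $j$ corresponding to the Frobenius $\pi$); and it contains $i$ with $\nrd(i)=q\leq M$ and $\trd(i)=0$, which is not an integer, so the curve is $M$-small. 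Embedding this order into a maximal order (which only enlarges the set of available small endomorphisms) and invoking the Deuring correspondence plus Deuring's criterion for being defined over $\FF_p$ completes the argument.

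The main obstacle — really the only subtle point — is verifying cleanly that containing a trace-zero norm-$p$ element of the maximal order forces (a model of) the corresponding curve to descend to $\FF_p$. This is where I would lean on Deuring's classical results (as cited for Proposition~\ref{msmallphi}) rather than reprove it: the element of norm $p$ and trace $0$ must, up to sign, be the $p$-power Frobenius of some curve in the isomorphism class, and a supersingular curve over $\FF_{p^2}$ whose Frobenius squares to $-p$ (rather than to $p$, $\pm p$ being the two possibilities for the restriction of Frobenius to a supersingular curve over $\FF_{p^2}$) is exactly one with $a_p=0$, hence has a twist defined over $\FF_p$. One should also double-check the boundary behavior for the small primes $q\in\{1,2\}$ (i.e.\ $p\equiv 1\bmod 4$ or $p\equiv 3\bmod 8$, where $j$-invariant $1728$ or the relevant CM curve already does the job) so the statement is uniform, but $p\geq 5$ makes this routine.
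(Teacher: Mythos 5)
Your proposal takes essentially the same approach as the paper: you exhibit a maximal order containing $\{1,i,j,k\}$, use $\nrd(i)=q\leq M$ to get $M$-smallness, and use $j\in\QuatO$ with $\nrd(j)=p$, $\trd(j)=0$ (so $\ZZ[j]\cong\ZZ[\sqrt{-p}]$) to conclude the corresponding $j$-invariant lies in $\FF_p$. The paper simply cites Delfs--Galbraith \cite[Proposition 2.4]{delfs} for that last step rather than unwinding Deuring's criterion, but the argument is the same.
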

\begin{proof}
    There is a maximal order $\QuatO$ containing $\{1,i,j,k\}$, which corresponds by the Deuring correspondence to some supersingular $j$-invariant $j$. Since $i\in\QuatO$ and $\nrd(i)=q\leq M$, $j$ is $M$-small. Since $j\in\QuatO$ and $\ZZ[j]\cong\ZZ[\sqrt{-p}]$, we have $j\in\FF_p$~\cite[Proposition 2.4]{delfs}.
\end{proof}

\noindent
Suppose we have completed Algorithm~\ref{precomp}. If we have some $j_0\in\SpM\cap\FF_p$, then we can apply Algorithm \ref{findisog} to compute an isogeny between any two $M$-small supersingular curves $j_1,j_2\in\SpM$. At each step in the algorithm, the isogenies in question may be recorded as a sequence of $\ell$-isogenies for relatively small primes $\ell$ (in particular, $\ell=O(\sqrt{M})$ in step \ref{smallisogfind} by Theorem \ref{clusterthm}, and $\ell=O(\log p)$ in step \ref{largeisogfind} by Observations \ref{obs1} and \ref{obs2}).

Even if we do not have a $j$-invariant $j_0\in\SpM\cap\FF_p$, a modification of Algorithm \ref{findisog} can still produce isogenies between $M$-small supersingular curves. If we obtain an isogeny $E_{j_1}\to E_{j_2}^{(p)}$, we may simply compose this isogeny with the $p^\text{th}$ power Frobenius $E_{j_2}^{(p)}\to E_{j_2}$. However, the resulting isogeny will be inseparable, and will not be expressible as a composition of $\ell$-isogenies for small primes $\ell$.

\subsection{Example}\label{exmple}

It is worth examining how well Algorithm~\ref{precomp} works in practice; in particular, line \ref{makemaxorder} depends on the unproven assumptions~\ref{normeq} and~\ref{easilyfactor}, so we will focus on the time this step takes.

Let $p=2^{256}+297$; we can take $B$ defined by $i^2=-7$ and $j^2=-p$. Let $M=100$. There are $62$ valid fundamental discriminants $D$:
\[-7,-15,-20,-40,-43,-47,-55,-56,-59,-79,-83,-84,-91,-95,\ldots,-399.\]
For each of these $D$, we computed $\QuatO_D$ as in Algorithm~\ref{precomp}, Line~\ref{makemaxorder}. To do this for all valid $D$ took $60$ seconds on a generic personal laptop. In each case, we were able to take $\beta=i$ or $\beta=j$ in Assumption~\ref{easilyfactor}. 

In practice, it seems as though the real bottleneck of Algorithm~\ref{precomp} is the edge-finding algorithm (lines \ref{startedgefinding}--\ref{endedgefinding}); this took $4105$ seconds on the same laptop.
	
\section{Counting $M$-small Curves}\label{appendixcount}

We will estimate the size of various sets of $M$-small curves, starting small and working up to progressively larger sets.

\begin{prop}\label{CMsize}
    Let $\quadO$ have discriminant $-4M\leq \disc\quadO<0$. Let $C_\quadO$ denote the set of isomorphism classes of elliptic curves $E$ over $\overline{\FF_p}$ such that $\quadO$ optimally embeds in $\End(E)$. Then
    \[|C_\quadO|\leq \deg H_{\quadO}(x)=|\Cl(\quadO)|=O(M^{1/2+\varepsilon}).\]
\end{prop}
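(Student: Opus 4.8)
The statement bundles three assertions: an inequality $|C_\quadO|\leq \deg H_\quadO(x)$, an equality $\deg H_\quadO(x)=|\Cl(\quadO)|$, and an asymptotic bound $|\Cl(\quadO)|=O(M^{1/2+\varepsilon})$. The middle equality is standard theory of complex multiplication: by definition $H_\quadO(x)$ is the minimal polynomial over $\QQ$ whose roots are the $j$-invariants of elliptic curves over $\CC$ with endomorphism ring exactly $\quadO$, and the class group $\Cl(\quadO)$ acts simply transitively on this set of $j$-invariants, so $\deg H_\quadO(x)=|\Cl(\quadO)|$; I would just cite the relevant reference (e.g.\ \cite[Theorem 7.7, Theorem 11.1]{cox} or the proof of \cite[Proposition 13.2]{cox}) rather than reprove it.

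For the first inequality, the plan is to exhibit an injection (or at least a surjection the other way) from $C_\quadO$ to the set of roots of $H_\quadO(x)\pmod p$. Given $E/\overline{\FF_p}$ with $\quadO$ optimally embedded in $\End(E)$, apply Deuring's Lifting Theorem exactly as in the proof of Proposition~\ref{msmallphi}: lift $E$ together with a generator of $\quadO$ to a curve $\widetilde E$ over a number field with $\End(\widetilde E)\supseteq \quadO$; optimality of the embedding forces $\End(\widetilde E)\cong\quadO$ (a larger endomorphism ring would descend and contradict optimality, using that reduction is a degree-preserving injection, \cite[Proposition II.4.4]{silverman_adv}). Then $j(\widetilde E)$ is a root of $H_\quadO$, and $j(E)$ is its reduction, hence a root of $H_\quadO(x)\pmod p$. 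Distinct $\overline{\FF_p}$-isomorphism classes have distinct $j$-invariants, so this gives an injection $C_\quadO\hookrightarrow\{$roots of $H_\quadO(x)\bmod p\}$, and the latter set has size at most $\deg H_\quadO(x)$. (One should note that not every optimal embedding of $\quadO$ need lift to $\quadO$ on the nose if $\quadO$ is not maximal at $p$, but since we only need an upper bound, the inequality direction is all that is required.)

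The asymptotic bound $|\Cl(\quadO)|=O(M^{1/2+\varepsilon})$ follows from the classical estimate $|\Cl(\quadO)| = O(|\disc\quadO|^{1/2}\log|\disc\quadO|)$, which in turn comes from the analytic class number formula $h(\quadO)w(\quadO)/(2\pi/\sqrt{|d|}) \asymp L(1,\chi_d)$ together with the trivial upper bound $L(1,\chi_d)=O(\log|d|)$ (no GRH needed for an upper bound; see e.g.\ \cite[Exercise 5.27]{cox} or a standard analytic number theory reference). Since $|\disc\quadO|\leq 4M$, this gives $|\Cl(\quadO)| = O(M^{1/2}\log M) = O(M^{1/2+\varepsilon})$. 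I expect the only real subtlety to be the first step — carefully justifying that optimality of the embedding of $\quadO$ in $\End(E)$ survives lifting to characteristic zero as an equality $\End(\widetilde E)\cong\quadO$ — but as noted, for the stated inequality one can afford to be slightly lossy here, so even this is not a genuine obstacle. The whole proof is short and essentially a citation-assembly plus one application of Deuring lifting.
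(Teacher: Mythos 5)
Your proof is correct and follows essentially the same route as the paper: Deuring lifting for the inequality, Cox for the degree/class-number equality, and a class-number upper bound with a trivial $L(1,\chi)$ estimate for the asymptotic. Two small points of comparison are worth noting. For the first inequality, the paper simply says it ``follows from Proposition~\ref{msmallphi} by counting roots,'' which is slightly terse since that proposition only produces \emph{some} quadratic order whose Hilbert class polynomial vanishes at $j$; your extra step — lift a generator of $\quadO$ and use optimality plus the degree-preserving injection of~\cite[Proposition II.4.4]{silverman_adv} to pin down $\End(\widetilde{E})\cong\quadO$ exactly — is the argument that actually makes the count go through, so you have if anything been more careful than the source. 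For the asymptotic, the paper factors $|\Cl(\quadO)|\leq h(D)\psi(f)$ via the nonmaximal-order class number formula~\cite[Theorem 7.24]{cox} and bounds the two pieces separately using $f^2|D|\leq 4M$, whereas you apply the class number formula directly to the (possibly nonfundamental) discriminant $\disc\quadO$; both give $O(M^{1/2+\varepsilon})$ and the difference is purely bookkeeping, though if you use the direct route you should make sure your cited reference genuinely covers nonfundamental discriminants (e.g.\ via binary quadratic forms as in Davenport) rather than only the maximal-order case.
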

\begin{proof}
    The first inequality follows from Proposition~\ref{msmallphi} by counting roots, and we have the middle equality $\deg H_\quadO(x)=|\Cl(\quadO)|$ by~\cite[Proposition 13.2]{cox}. Let $\quadO=\quadO_{K,f}$, let $D$ be the discriminant of $K$, and $h(D)$ the class number of $K$. Then
    \[|\Cl(\quadO)|\leq h(D)f\prod_{\text{prime }\ell\mid f}\left(1-\left(\frac{D}{p}\right)\frac1p\right)\]
    using the formula for the class number of nonmaximal orders~\cite[Theorem 7.24]{cox}. We can bound this above by $h(D)\psi(f)$ using the Dedekind $\psi$ function, defined on positive integers as
    \[\psi(n):=n\prod_{\text{prime }\ell\mid n}\left(1+\frac1\ell\right).\]
    We have $\psi(n)=O(n\log\log n)$~\cite[Corollary 3.2]{extremepsi}, and the classical bound $h(D)=O(|D|^{1/2}\log D)$ (for instance, by Dirichlet's class number formula~\cite[\S 6 (15)]{davenport} and bounds of the form $|L(1,\chi_D)|=O(\log D)$~\cite{sound}). Together these give the bound
    \[|\Cl(\quadO)|=O(f|D|^{1/2}\log D\log\log f)=O(M^{1/2+\varepsilon}),\]
    using $f^2|D|=\disc\quadO\leq 4M$.
\end{proof}

\begin{prop}\label{TDsize}
    Let $D$ be a fundamental discriminant, and
    \[T_D:=\{j\in \Sp:\QQ(\alpha)\cong\QQ(\sqrt{D})\text{ for some }\alpha\in\End(E_j)-\ZZ,\deg\alpha\leq M\}\]
    be the set from Theorem~\ref{clusterthm} (defined in Section~\ref{partitionexist}). Then \[|T_D|=O\left(\frac{M\log |D|}{\sqrt{|D|}}\right).\]
\end{prop}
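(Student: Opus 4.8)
The plan is to cover $T_D$ by finitely many of the sets $C_{\quadO}$ of Proposition~\ref{CMsize} and then sum the resulting bounds.

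Set $K=\QQ(\sqrt D)$. If $|D|>4M$, then every $\alpha\in\quadO_K-\ZZ$ satisfies $\N_{K/\QQ}(\alpha)\geq |D|/4>M$, so $T_D=\varnothing$ and the estimate is trivial; assume therefore $|D|\leq 4M$. Fix $j\in T_D$ and choose $\alpha\in\End(E_j)-\ZZ$ with $\deg\alpha\leq M$ and $\QQ(\alpha)\cong K$. Since $\alpha$ is integral, $\ZZ[\alpha]\subseteq K$ is a quadratic order, say $\ZZ[\alpha]=\quadO_{K,f}$; because $\alpha\notin\ZZ$, its imaginary part has absolute value at least $f\sqrt{|D|}/2$, whence $M\geq\deg\alpha=\nrd(\alpha)\geq f^2|D|/4$, i.e.\ $f\leq 2\sqrt M/\sqrt{|D|}$. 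Regarding $\End(E_j)$ as a maximal order in $B$, the order $\QQ(\alpha)\cap\End(E_j)$ is optimally embedded in $\End(E_j)$ and contains $\ZZ[\alpha]=\quadO_{K,f}$; being an overorder of $\quadO_{K,f}$ inside $K$, it equals $\quadO_{K,g}$ for some $g\mid f$, so $g\leq 2\sqrt M/\sqrt{|D|}$ as well. Hence $E_j\in C_{\quadO_{K,g}}$, and
\[T_D\subseteq\bigcup_{1\leq g\leq 2\sqrt M/\sqrt{|D|}}C_{\quadO_{K,g}}.\]

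Next I would apply Proposition~\ref{CMsize} together with the class number formula for nonmaximal orders~\cite[Theorem 7.24]{cox}, discarding the unit index (which is $\geq 1$) and bounding $1-\legendre{D}{\ell}\frac1\ell\leq 1+\frac1\ell$:
\[|C_{\quadO_{K,g}}|\leq|\Cl(\quadO_{K,g})|\leq h(D)\,\psi(g),\qquad \psi(g):=g\prod_{\ell\mid g}\Bigl(1+\tfrac1\ell\Bigr).\]
Summing gives $|T_D|\leq h(D)\sum_{g\leq 2\sqrt M/\sqrt{|D|}}\psi(g)$. To estimate the sum I would use the identity $\psi(g)=\sum_{ab=g}a\,\mu^2(b)$ (both sides are multiplicative and agree on prime powers), which yields
\[\sum_{g\leq X}\psi(g)=\sum_{b\leq X}\mu^2(b)\sum_{a\leq X/b}a\leq\frac{X^2}{2}\sum_{b\leq X}\frac{\mu^2(b)}{b^2}+O(X\log X)=O(X^2).\]
Taking $X=2\sqrt M/\sqrt{|D|}$, so that $X^2=4M/|D|$, and inserting the classical bound $h(D)=O(\sqrt{|D|}\log|D|)$ already used in the proof of Proposition~\ref{CMsize}, we conclude
\[|T_D|=O\Bigl(\sqrt{|D|}\log|D|\cdot\tfrac{M}{|D|}\Bigr)=O\Bigl(\tfrac{M\log|D|}{\sqrt{|D|}}\Bigr).\]

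Granting Proposition~\ref{CMsize}, the argument is essentially bookkeeping; the one place needing a little care is the estimate $\sum_{g\leq X}\psi(g)=O(X^2)$. Since the target bound carries no $\varepsilon$ (unlike Proposition~\ref{CMsize}), the crude $\psi(g)=O(g^{1+\varepsilon})$ is not quite enough, and one should use the clean $O(X^2)$ coming from the convolution identity above. A secondary point worth stating explicitly is that passing from $\ZZ[\alpha]$ to the order it generates after optimal embedding can only shrink the conductor, so the range of $g$ in the union is genuinely $g\leq 2\sqrt M/\sqrt{|D|}$.
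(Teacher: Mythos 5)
Your argument is essentially the paper's: both bound $|T_D|$ by summing $|\Cl(\quadO_{K,g})|\leq h(D)\psi(g)$ over conductors $g\leq\sqrt{4M/|D|}$, obtaining a quantity of order $h(D)\cdot M/|D|$, and then applying $h(D)=O(\sqrt{|D|}\log|D|)$. The one point of divergence is how $\sum_{g\leq X}\psi(g)=O(X^2)$ is justified: the paper cites H\"urlimann's asymptotic $\sum_{f\leq N}\psi(f)=\frac{15}{2\pi^2}N^2+O(N\log N)$, while you derive the needed $O(X^2)$ bound from scratch via the Dirichlet convolution $\psi=\id * \mu^2$ (correct on prime powers, hence by multiplicativity). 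Your version is self-contained and makes explicit the reason a naive $\psi(g)=O(g^{1+\varepsilon})$ bound would not suffice, which is a worthwhile observation. You also avoid the paper's factor of $2$ from conjugate $j$-invariants by covering $T_D$ directly with the sets $C_\quadO$ (classes over $\overline{\FF_p}$, hence already counting conjugates as one); both bookkeepings are fine since it is an $O$-estimate. The remark that the optimally embedded order $\QQ(\alpha)\cap\End(E_j)$ has conductor dividing that of $\ZZ[\alpha]$ is correct and handles a small point the paper elides by working with $\QuatO\cap\QQ(\alpha)$ directly.
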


\noindent
The structure of $T_D$ will depend heavily on its relationship to $M$, as the proof will illustrate. If $D$ is very small, then many different quadratic orders optimally embed into endomorphism rings of curves in $T_D$ ($N$ is large), but each quadratic order embeds in only a couple of these endomorphism rings ($h(D)$ is small). If $D$ is comparable to $M$, then there are very few quadratic orders that optimally embed into endomorphism rings of curves in $T_D$ ($N$ is small), but each quadratic order optimally embeds into many different endomorphism rings ($h(D)$ is large). Intuitively, the ``isogeny valley'' $T_D$ is deep and narrow for small $|D|$, but shallow and wide for large $|D|$.

\begin{proof}
    Let $K$ be a field of discriminant $D$, and let $C_D$ be the set of isomorphism classes of maximal orders $\QuatO\subseteq B$ containing an element $\alpha$ with $\nrd(\alpha)\leq M$ and $\QQ(\alpha)\cong K$. By the Deuring correspondence we have $|T_D|\leq 2|C_D|$, so it suffices to count $C_D$.

    Suppose $\alpha\in\QuatO$ has $\nrd(\alpha)\leq M$ and $\QQ(\alpha)\cong K$. We have $\alpha\in\QuatO\cap \QQ(\alpha)\cong \quadO_{K,f}$ for some conductor $f$. We must have $f^2|D|/4\leq\nrd(\alpha)\leq M$, implying that $f\leq\lfloor \sqrt{4M/|D|}\rfloor=N$. Hence, summing over all possible quadratic orders of $K$ with conductors in this range, we have
    \[|C_D|\leq \sum_{f=1}^N |\Cl(\quadO_{K,f})|\leq h(D)\sum_{f=1}^N \psi(f)\]
    using the proof of Proposition~\ref{CMsize}. This value is
    \[h(D)\left(\frac{30M}{\pi^2|D|}+O(N\log N)\right)\]
    by~\cite[Lemma 2.1]{hurlimann}. Applying $h(D)=O(|D|^{1/2}\log |D|)$, we get the desired bound $|T_D|=O\left(M\log |D|/\sqrt{|D|}\right)$.
\end{proof}

\begin{prop}\label{numMsmall}
    The number of $M$-small curves is $O(M^{3/2})$.
\end{prop}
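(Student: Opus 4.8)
The plan is to reduce the count to a sum of degrees of Hilbert class polynomials and then estimate that sum by an elementary lattice-point argument. First I would apply Proposition~\ref{msmallphi}: the set of $M$-small $j$-invariants (over $\overline{\FF_p}$) is contained in the union, over quadratic orders $\quadO$ with $-4M\le\disc\quadO<0$, of the root sets of $H_\quadO(x)\bmod p$. Since $H_\quadO$ is monic of degree $|\Cl(\quadO)|$ (Proposition~\ref{CMsize}), its reduction has at most $|\Cl(\quadO)|$ roots in $\overline{\FF_p}$, so the number of $M$-small curves is at most $\sum_{\quadO}|\Cl(\quadO)|$, where $\quadO$ runs over quadratic orders of discriminant in $[-4M,0)$. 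In particular the bound obtained this way is independent of $p$.

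To estimate this sum, I would use the classical fact that $|\Cl(\quadO_d)|$ equals the number of primitive reduced positive definite binary quadratic forms $ax^2+bxy+cy^2$ of discriminant $d=b^2-4ac$ \cite[Ch.~7]{cox}; dropping primitivity only inflates the count, so $\sum_\quadO|\Cl(\quadO)|$ is bounded by the number of integer triples $(a,b,c)$ with $1\le a$, $|b|\le a\le c$, and $-4M\le b^2-4ac<0$. The reduced conditions force $3a^2\le 4ac-b^2\le 4M$, so $a\le\sqrt{4M/3}$; for each admissible $a$ there are at most $2a+1$ values of $b$; and for fixed $(a,b)$ the coordinate $c$ is confined to $\bigl(b^2/(4a),\,(b^2+4M)/(4a)\bigr]$, an interval of length $M/a$, giving at most $M/a+1$ values. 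Summing,
\[\sum_\quadO|\Cl(\quadO)|\le\sum_{1\le a\le\sqrt{4M/3}}(2a+1)\Bigl(\frac{M}{a}+1\Bigr),\]
whose leading term $\sum_a 2M$ is $O(M\cdot\sqrt M)$ and whose remaining terms contribute $O(M)+O(M\log M)$. This gives $\sum_\quadO|\Cl(\quadO)|=O(M^{3/2})$, completing the proof.

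There is no serious obstacle here: essentially all the content sits in Proposition~\ref{msmallphi}, after which everything is a routine geometry-of-numbers estimate, with the only point of care being to organize the triple count so the $O(M^{3/2})$ contribution dominates (and in particular so the harmonic sum $\sum_{a\le\sqrt M}M/a$ only costs a logarithm). An equivalent alternative, closer to the approach of Propositions~\ref{CMsize} and~\ref{TDsize}, is to write $\disc\quadO=f^2D$ with $D$ fundamental, bound $|\Cl(\quadO)|\le h(D)\psi(f)$ and $\sum_{f\le N}\psi(f)=O(N^2)$ as in the proof of Proposition~\ref{CMsize}, and thereby reduce to estimating $\sum_{|D|\le 4M}h(D)/|D|$; this last sum is $O(\sqrt M)$ by partial summation from the classical average bound $\sum_{|D|\le X}h(D)=O(X^{3/2})$, again giving $O(M^{3/2})$.
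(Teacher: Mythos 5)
Your proof is correct and takes essentially the same route as the paper: both reduce the count to $\sum_{\quadO}|\Cl(\quadO)|$ over quadratic orders of discriminant in $[-4M,0)$, identify class-group elements with reduced primitive binary quadratic forms, and estimate the resulting triples $(a,b,c)$ by the same geometry-of-numbers argument with $a\le\sqrt{4M/3}$. The only difference is how you arrive at the class-number sum: you invoke Proposition~\ref{msmallphi} and bound by degrees of Hilbert class polynomials (which uniformly handles ordinary and supersingular $j$-invariants in one stroke), whereas the paper phrases the reduction in terms of optimal embeddings of $\quadO$ into maximal orders of $B$, the supersingular side of the Deuring correspondence.
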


\begin{proof}
    Given an $M$-small order $\QuatO$, let $\alpha\in\QuatO-\ZZ$ have $\nrd(\alpha)\leq M$. Then $\alpha$ is in some quadratic order $\quadO$, and $|\disc \quadO|/4\leq\nrd(\alpha)$ implies $-4M\leq\disc \quadO<0$. For every possible quadratic order, there are at most $|\Cl(\quadO)|$ isomorphism classes of maximal orders in which $\quadO$ is optimally embedded, meaning that we obtain an upper bound for the number of $M$-small maximal orders by summing $|\Cl(\quadO)|$ over all quadratic orders with $-4M\leq\disc \quadO<0$.
    
    A quadratic order $\quadO$ is uniquely determined by its discriminant, and there is a bijection between $\Cl(\quadO)$ and the set of reduced primitive positive-definite binary quadratic forms of discriminant $\disc \quadO$ (Theorem 7.7(ii) and Theorem 2.8 of \cite{cox}). That is, it suffices to bound the number of triples $(a,b,c)\in\ZZ^3$ with $-a<b\leq a\leq c$ and $b\geq 0$ if $a=c$, $\gcd(a,b,c)=1$, and $-4M\leq b^2-4ac<0$. 
    
    From $|b|\leq a\leq c$, we have $-4M\leq b^2-4ac\leq -3a^2$, so $a\leq \sqrt{4M/3}$. Likewise $-4M\leq b^2-4ac\leq a^2-4ac$ implies $a\leq c\leq \frac{a}{4}+\frac{M}{a}$. Together with $-a<b\leq a$ we conclude that there are at most
    \[\left(\frac{a}{4}+\frac{M}{a}-a+1\right)(2a)\leq 2M+1\]
    valid pairs $(b,c)$ for a given $a$; summing over the $\sqrt{4M/3}$ options for $a$ gives $O(M^{3/2})$ triples.
\end{proof}

\begin{figure}
    \centering
    
    \begin{tikzpicture}
    \begin{axis}[
        ybar,
        axis lines=left,
        ylabel=Number of primes $p$,
	    xlabel=Proportion of $M$-small curves which are supersingular mod $p$,
        ymin=0,
        width=0.8\textwidth,
        height=0.5\textwidth
    ]
    \addplot +[
        hist={
            bins=70,
            data min=0.15,
            data max=0.85
        },
        blue!80!black,
        fill=blue!80!black
    ] table [y index=0] {Fig2data.txt};
    \end{axis}
    \end{tikzpicture}
    
    \caption{A histogram (bins of width $0.01$) of the proportion of $100$-small curves that are supersingular mod $p$, as $p$ varies over $1000$ consecutive primes $2^{40}<p\leq 2^{40}+27201$. Discussed in Remark \ref{halfsupersing}.
    Data computed using Magma~\cite{magma}.}
    \label{fig:percentsupsing}
\end{figure}
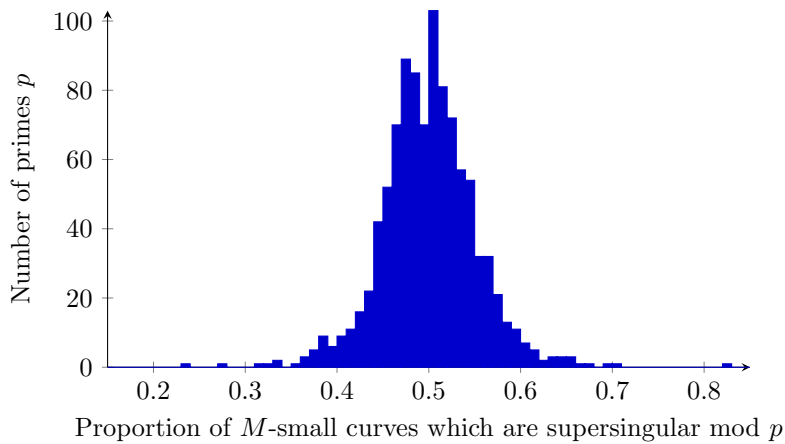

\begin{rmk}\label{halfsupersing}
    When $M\ll p$, we observe that roughly half of all $M$-small curves are supersingular; for instance, with $p=2^{256}+297$ and $M=100$ (the example discussed in Section \ref{exmple}), there are $1108$ $M$-small curves, of which $528$ (about $0.48$ of the total) are supersingular. In Figure~\ref{fig:percentsupsing}, we see that the proportion of $M$-small curves that are supersingular appears to follow a distribution centered at $0.5$; for $94\%$ of the primes $p$ considered, between $0.4$ and $0.6$ of $M$-small curves were supersingular modulo $p$.
    
    Heuristically, this follows from the observation that a root of $H_\quadO(x)\pmod p$ is supersingular if and only if $p$ does not split in the field of fractions of $\quadO$ (Proposition \ref{msmallphi}). For each quadratic order $\quadO$, the set of primes which split in the field of fractions of $\quadO$ have density $\frac12$ (Chebotarev's Density Theorem), so for a set of quadratic orders with discriminants in a given range, we might expect that $p$ will split in the field of fractions of about half of them.
    
    This observation clearly fails for $M$ large enough, because there are only finitely many supersingular curves, but infinitely many ordinary ones. This is because supersingular curves have complex multiplication by infinitely many distinct quadratic orders; that is, even though half (i.e. infinitely many) of the polynomials $H_\quadO(x)\pmod p$ should have supersingular roots, each individual supersingular $j$-invariant will be a root of infinitely many of them. But for small enough values of $M$, at most one of these quadratic orders can have $-4M\leq \disc\quadO<0$, by Theorem \ref{subfieldbound}. So for $M\ll p$, we expect the set roots of $H_\quadO(x)\pmod p$ for $-4M\leq \disc\quadO<0$ to have similar numbers of ordinary and supersingular curves.
\end{rmk}

\begin{prop}\label{allcurvesMsmall}
	All supersingular $j$-invariants are $\left(\frac12 p^{2/3}+\frac14\right)$-small. The exponent is the best possible: if $\theta<\frac23$ then for any constant $C$, there exists a prime $p$ and a supersingular $j$-invariant mod $p$ which is not $\left(Cp^\theta\right)$-small.
\end{prop}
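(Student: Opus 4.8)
The plan is to prove the upper bound by the geometry of numbers applied to $\End(E_j)$, and the optimality of the exponent by a counting argument.

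For the upper bound, fix a supersingular $j$ and let $\QuatO=\End(E_j)$, a maximal order of $B$. I work in the trace-zero subspace $B^0=\{v\in B:\trd(v)=0\}$ with the positive-definite quadratic form $\nrd$; the point is that $v^2=-\nrd(v)$ for $v\in B^0$, so any element of $\QuatO$ with nonzero trace-zero part is non-integral, hence corresponds under the Deuring correspondence to an endomorphism that is not multiplication by an integer. Let $\pi\colon B\to B^0$, $\pi(x)=x-\tfrac12\trd(x)$, be the orthogonal projection (orthogonal because $B^0$ is the orthogonal complement of $1$ under the norm form), and set $L=\pi(\QuatO)$. Since $1\in\QuatO$ is a primitive vector with $\nrd(1)=1$, the image $L$ is a rank-$3$ lattice with $\mathrm{covol}(L)=\mathrm{covol}(\QuatO)$, and a short computation gives $\mathrm{covol}(\QuatO)=p/4$ (equivalently, the norm form on $\QuatO$ has determinant $p^2/16$), consistent with maximal orders of $B$ having discriminant $p^2$~\cite[Theorem 15.5.5]{voight_book}. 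Applying Minkowski's first theorem to $L$, and using that Hermite's constant in dimension $3$ is $\gamma_3=2^{1/3}$, yields a nonzero $v\in L$ with
\[\nrd(v)\le \gamma_3\,\mathrm{covol}(L)^{2/3}=2^{1/3}(p/4)^{2/3}=\tfrac12 p^{2/3}.\]
Finally I lift: write $v=\pi(\alpha)$ with $\alpha\in\QuatO$. If $\trd(\alpha)$ is even then $v=\alpha-\tfrac{\trd\alpha}{2}\in\QuatO$; if $\trd(\alpha)$ is odd then choosing $m\in\ZZ$ with $\tfrac{\trd\alpha}{2}-m=\pm\tfrac12$ gives $v\pm\tfrac12=\alpha-m\in\QuatO$ with $\nrd(v\pm\tfrac12)=\nrd(v)+\tfrac14$. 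In either case $\QuatO$ contains a non-integral element of reduced norm at most $\tfrac12 p^{2/3}+\tfrac14$, so $E_j$ admits a non-integer endomorphism of degree at most $\tfrac12 p^{2/3}+\tfrac14$; that is, $j$ is $\left(\tfrac12 p^{2/3}+\tfrac14\right)$-small.

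For optimality, fix $\theta<\tfrac23$ and a constant $C$, and set $M=\lfloor Cp^\theta\rfloor$. By Proposition~\ref{numMsmall}, the number of $M$-small curves in characteristic $p$ is $O(M^{3/2})$, so the number of $M$-small supersingular $j$-invariants over $\FF_{p^2}$ is $O(p^{3\theta/2})$. On the other hand, the total number of supersingular $j$-invariants over $\FF_{p^2}$ is at least $\lfloor p/12\rfloor$. Since $3\theta/2<1$, for all sufficiently large primes $p$ we have $O(p^{3\theta/2})<\lfloor p/12\rfloor$, so some supersingular $j$-invariant mod $p$ is not $M$-small, hence not $\left(Cp^\theta\right)$-small. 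This shows the exponent $\tfrac23$ cannot be lowered.

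The delicate point is entirely in the first part: obtaining the constant $\tfrac12$ rather than the weaker $2^{-1/3}$ that the naive argument (Minkowski applied directly to the trace-zero lattice $\QuatO\cap B^0$, which has covolume $p/2$) would give. The gain comes from passing to the finer lattice $L=\pi(\QuatO)$, and one must check carefully both that $\mathrm{covol}(L)=\mathrm{covol}(\QuatO)=p/4$ — using that $1$ is a primitive norm-$1$ vector orthogonal to $B^0$ — and that a short vector of $L$ lifts back into $\QuatO$ at a cost of at most $\tfrac14$ in the norm, which is exactly where the additive term $\tfrac14$ enters and genuinely cannot be removed when the minimum is realized by an element of odd trace. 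Everything else — the value $\gamma_3=2^{1/3}$, the Deuring dictionary, and the counting in the optimality argument — is routine.
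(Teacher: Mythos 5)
Your proof is correct and follows essentially the same approach as the paper: project the maximal order $\QuatO$ (a rank-$4$ lattice of covolume $p/4$) orthogonally onto the trace-zero hyperplane, apply the Hermite/Minkowski bound in dimension $3$ with constant $2^{1/3}$ to obtain a short trace-zero vector, lift back to $\QuatO$ at a cost of at most $\tfrac14$ in the norm, and prove optimality by comparing the $O(M^{3/2})$ count of $M$-small curves against the $p/12+O(1)$ count of supersingular $j$-invariants. The only cosmetic difference is that the paper makes the covolume computation concrete via an explicit embedding of $B$ into $\RR^4$ and cites Cassels for the dimension-$3$ bound, whereas you state the projection abstractly and invoke $\gamma_3=2^{1/3}$ directly.
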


\noindent
The sufficiency of $\frac23$ was noted by Elkies~\cite[Section 4]{elkies}, and Yang showed that no smaller exponent could be taken~\cite[Proposition 1.1]{yang}. The proof given here roughly follows each of their approaches. Notice that Elkies' bound uses the ``large-scale'' structure of maximal orders, namely the geometry of the full $4$-dimensional lattice, while Yang's bound uses the ``small-scale'' structure, counting embedded quadratic orders of small discriminant.
    
\begin{proof}
    We can embed $B$ into $\RR^4$ as follows:
    \[a+bi+cj+dk\mapsto (a, b\sqrt{q}, c\sqrt{p}, d\sqrt{qp}).\]
    This makes the reduced norm $(a+bi+cj+dk)\mapsto a^2+qb^2+pc^2+qpd^2$ agree with the standard Euclidean norm on $\RR^4$. A maximal order $\QuatO\subseteq B$ will be a $4$-dimensional lattice of covolume $\frac{p}{4}$ under this embedding~\cite[(2.2)]{chevgal}. Projecting $\QuatO$ onto the orthogonal complement of $1$ gives a $3$-dimensional lattice of covolume $\frac p4$. By Theorem II.III.A of~\cite{cassels}, any such lattice must have a nonzero element $v$ with length
    \[|v|\leq\left(\frac p4\sqrt{2}\right)^{1/3}=\frac{p^{1/3}}{\sqrt{2}}.\]
    An element of $\QuatO$ that projected onto $v$ must be of the form $\frac k2+v$ for some integer $k$, because the reduced trace of an integral element is an integer. Hence either $v\in\QuatO$ or $\frac 12+v\in\QuatO$, and the reduced norm is either $\frac12p^{2/3}$ or $\frac12 p^{2/3}+\frac14$. This shows $\QuatO$ is $\left(\frac12 p^{2/3}+\frac14\right)$-small.
    
    Conversely, we saw that the number of $M$-small curves is $O(M^{3/2})$, by summing sizes of ideal class groups of embedded quadratic orders (Proposition~\ref{numMsmall}). So if $\theta<\frac{2}{3}$ then the number of $(Cp^\theta)$-small curves will be $O(p^{3\theta/2})$, with $\frac{3\theta}{2}<1$. But the number of supersingular curves is $\frac p{12}+O(1)$~\cite[Theorem V.4.1(c)]{silverman}, which grows faster than the set of $(Cp^\theta)$-small curves.
\end{proof}

\section{Prime-to-$\ell$ isogenies repel length-$\ell$ vertical steps}\label{diffprimeisog}

For this appendix, we assume the setup of Sections~\ref{quatbackground}--\ref{embeddingorders}.

Recall Lemma~\ref{linkdown}, which states that given any maximal order $\QuatO$ with $\ZZ[\ell\beta]$ optimally embedded, there is a maximal order $\QuatO'$ with $\ZZ[\beta]$ optimally embedded which is distance $\ell$ away. If we replace $\QuatO'$ with an isomorphic order $\QuatO''$, the following proposition and corollary show that $d(\QuatO,\QuatO'')$ must be either a multiple of $\ell$ or extremely large. This indicates why we must consider all primes in order to find a short path in Proposition~\ref{opath} and Theorem~\ref{clusterthm}(b).

\begin{prop}\label{longifexcludel}
    Let $\ell$ be a prime, and $\beta\in\quadO_K$. Suppose maximal orders $\QuatO$ and $\QuatO'$ have $\ZZ[\ell\beta]$ and $\ZZ[\beta]$ optimally embedded, respectively. If $d(\QuatO,\QuatO')$ is not divisible by $\ell$, then $d(\QuatO,\QuatO')\geq \frac{p}{4\ell\nrd(\beta)}$.
\end{prop}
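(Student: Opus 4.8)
The plan is to localize the problem at $\ell$, just as in the proof of Lemma~\ref{disteq}, and compare the actual position of $\QuatO'$ in the Bruhat–Tits tree against the position dictated by the optimal embedding of $\ZZ[\beta]$. First I would set $d := d(\QuatO,\QuatO')$ and let $m$ be the $\ell$-part of $d$, so that $m = \ell^e$ where $e$ is the distance between the vertices $\QuatO_\ell$ and $\QuatO'_\ell$ in the tree for $B_\ell \cong M_2(\QQ_\ell)$. The hypothesis is that $\ell \nmid d$, i.e.\ $e = 0$, meaning $\QuatO_\ell = \QuatO'_\ell$. Now Lemma~\ref{linkdown} (really its local content, \cite[Proposition 30.5.3]{voight_book}) tells us exactly which vertices of the tree admit an optimal embedding of $\ZZ[\ell\beta]$ versus $\ZZ[\beta]$: relative to the base vertex where $\ZZ[\beta]$ optimally embeds, the vertices admitting an optimal embedding of $\ZZ[\ell\beta]$ lie at distance $\geq 1$ along a specific direction (governed by how $\ell$ behaves in $\ZZ[\beta] \subseteq \quadO_K$ locally). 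So $\QuatO_\ell$ (which has $\ZZ[\ell\beta]$ embedded) and $\QuatO'_\ell$ (which has $\ZZ[\beta]$ embedded) cannot be the same vertex — contradiction — \emph{unless} the embedding of $K$ into $B_\ell$ realized inside $\QuatO$ is not the "same" embedding (up to conjugacy inside $\QuatO_\ell$) as the one realized inside $\QuatO'$; the point is that $\QuatO$ and $\QuatO'$ embed $K$ via genuinely different global embeddings, so there is no a priori local compatibility.

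This is the crux, and it forces the argument away from a purely local tree computation and toward the discriminant bound of Kaneko (Theorem~\ref{subfieldbound}), in the same spirit as Proposition~\ref{largeinters}. Here is the second approach, which I expect to be the one that works cleanly. Suppose $\ell \nmid d$, and consider $\QuatO'' := \QuatO \cap \QuatO'$ wait — better: use the distance characterization directly. Let $\mathfrak{a}$ be the integral ideal of smallest reduced norm linking $\QuatO$ to $\QuatO'$, so $\nrd(\mathfrak{a}) = d$ with $\ell \nmid d$. Then $\beta \in \QuatO'$ pulls back: $\mathfrak{a}\beta\mathfrak{a}^{-1}$ is not in $\QuatO$ in general, but $d\beta \in \mathfrak{a}\QuatO'\mathfrak{a}^{-1}$-adjacent considerations give $d\cdot(\text{something}) \in \QuatO$. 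Concretely, since $\mathfrak{a}$ is a left $\QuatO$-ideal and right $\QuatO'$-ideal with $\QuatO' \ni \beta$, we get $\mathfrak{a}\beta \subseteq \mathfrak{a}$, hence $\beta \in \QuatO_R(\mathfrak{a}) = \QuatO'$ (consistent), and $\mathfrak{a}^{-1}$ has $\nrd(\mathfrak{a}^{-1}) = d^{-1}$, so $d\mathfrak{a}^{-1}$ is integral and $d\beta' := d\,\mathfrak{a}^{-1}\beta\mathfrak{a}\,\cdot(\ldots)$ — the clean statement is that $\QuatO$ contains $d\beta$ conjugated appropriately, giving $\ZZ[d\beta]$-type elements in $\QuatO$. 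Then I would invoke the key trick: since $\ZZ[\ell\beta]$ is optimally embedded in $\QuatO$, and $\ell \nmid d$, the element of $\QuatO \cap \QQ(\beta)$ obtained from $\beta$ via the linking ideal generates (after clearing the $\ell$-free denominator $d$) a quadratic order whose conductor is divisible by $\ell$ but which must be \emph{compatible} with $\ZZ[\ell\beta]$ being optimal — and the only way to reconcile "$\ZZ[\ell\beta]$ optimal in $\QuatO$" with "a $\beta$-flavored element of conductor prime-to-$\ell$-times-$d$ also lies in $\QuatO$" is if these come from two distinct optimal embeddings of (possibly different) quadratic orders of $K$ into the single maximal order $\QuatO$, to which Theorem~\ref{subfieldbound} applies: the product of the two discriminants is $\geq p^2$ (same field case). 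One discriminant is $\disc\ZZ[\ell\beta] = \ell^2 \disc\ZZ[\beta]$ up to the fundamental part, of size $O(\ell^2 \nrd(\beta))$; the other is controlled by $d$ and $\nrd(\beta)$. Balancing $\disc_1 \cdot \disc_2 \geq p^2$ against $\disc_1 = O(\ell^2\nrd(\beta))$, $\disc_2 = O(d^2 \nrd(\beta)/\ell^2)$ or similar yields $d^2 \gtrsim p^2/(\ell^2\nrd(\beta)^2 \cdot \ell^{-2}\cdots)$; tracking constants should give exactly $d \geq \frac{p}{4\ell\nrd(\beta)}$.

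The main obstacle is the bookkeeping in the previous paragraph: identifying precisely which quadratic order of $K$ the linking ideal produces inside $\QuatO$ (its conductor, and the fact that it is a \emph{distinct} optimally embedded suborder from $\ZZ[\ell\beta]$ when $\ell \nmid d$ but $d > 1$), and then matching the resulting discriminant inequality to the claimed constant $4$. I would handle this by again localizing: at every prime $q \neq \ell, p$ the two embeddings agree locally (since $d$ contributes no $\ell$-part there can still be other primes, but optimality of $\ZZ[\ell\beta]$ at $q$ is automatic once $\ell\beta \in \quadO_K$ and $q \nmid$ index), and at $\ell$ the condition $\ell \nmid d$ means $\QuatO_\ell = \QuatO'_\ell$, so the pulled-back element of $\QQ(\beta)$ actually lies in $\QuatO_\ell$ with the \emph{full} $\beta$, not just $\ell\beta$ — but $\ZZ[\ell\beta]$ was supposed to be optimal in $\QuatO$, which at $\ell$ would be contradicted \emph{unless the two embeddings $K \hookrightarrow B_\ell$ differ}. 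Quantifying "differ" via the $\QuatO$-conjugacy class and feeding the two non-conjugate optimal embeddings of quadratic orders of the same field $K$ into Theorem~\ref{subfieldbound} gives the $p^2$ lower bound on the product of discriminants, which is the engine of the whole estimate.
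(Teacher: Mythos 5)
You have the right overall architecture — the pivot away from a purely local tree argument, the recognition that $\ell\nmid d$ must force the two embedded copies of $K$ into $B$ to be \emph{distinct} subfields, and the invocation of Kaneko's bound (Theorem~\ref{subfieldbound}) in its ``same field of fractions'' form $\disc\quadO\cdot\disc\quadO'\geq p^2$. This is indeed the engine of the paper's proof. However, the proposal never actually produces the two optimally embedded quadratic orders inside $\QuatO$ and never finishes the discriminant bookkeeping, and you say so yourself (``the main obstacle is the bookkeeping\dots''). That acknowledged gap is a genuine one, and it arises from an avoidable detour: you go looking for the second quadratic order by ``pulling $\beta$ back through the linking ideal $\mathfrak a$,'' flirting with $\mathfrak a^{-1}\beta\mathfrak a$-type conjugations. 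No conjugation is needed. The mechanism is exactly the one already used in the proof of Proposition~\ref{largeinters}: set $d=d(\QuatO,\QuatO')=|\QuatO':\QuatO\cap\QuatO'|$; then $d\cdot\QuatO'\subseteq\QuatO\cap\QuatO'\subseteq\QuatO$ by Lagrange's theorem, so $d\beta\in\QuatO$ directly. Thus, writing $K$ and $K'$ for the (a priori different) subfields of $B$ realizing the optimal embeddings of $\ZZ[\ell\beta]$ in $\QuatO$ and $\ZZ[\beta]$ in $\QuatO'$, the order $\quadO':=\QuatO\cap K'$ contains $d\beta$ and hence satisfies $|\disc\quadO'|\leq 4d^2\nrd(\beta)$, while $\quadO:=\QuatO\cap K\cong\ZZ[\ell\beta]$ satisfies $|\disc\quadO|\leq 4\ell^2\nrd(\beta)$; combining with Kaneko gives $16\,\ell^2 d^2\nrd(\beta)^2\geq p^2$, which is the claimed bound.

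Two further corrections. First, your argument that $K\neq K'$ should also be done globally rather than on the Bruhat--Tits tree: if $K=K'$, then the element $d\beta\in\QuatO$ lies in $\QuatO\cap K=\ZZ[\ell\beta]$, which forces $\ell\mid d$, contradicting the hypothesis. The local-tree version you sketch is more delicate precisely because, as you note, the two global embeddings of $K$ into $B$ need not induce conjugate local embeddings at $\ell$, so ``$\QuatO_\ell=\QuatO'_\ell$'' does not immediately clash with the two different optimal local orders. Second, you flagged a worry that one of the optimally embedded orders ``has conductor divisible by $\ell$'' while the other does not, as if the $\ell$-adic structure needed reconciling; in fact the discriminant bounds above are completely agnostic to the conductors, and the $\ell$ in the final bound enters only through $\nrd(\ell\beta)=\ell^2\nrd(\beta)$, not through any $\ell$-adic compatibility condition.
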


\begin{proof}
    If the optimal embeddings of $\ZZ[\ell\beta]$ and $\ZZ[\beta]$ were to land in the same subfield $K\subseteq B$, then $|\QuatO':\QuatO\cap \QuatO'|$ would be divisible by $\ell$, a contradiction. Hence we must have $\QuatO\cap K\cong \ZZ[\ell\beta]$ and $\QuatO'\cap K'\cong \ZZ[\beta]$ for distinct (but isomorphic) fields $K$. let $\quadO:=\QuatO\cap K$ and $\quadO':=\QuatO\cap K'$ both be optimally embedded in $\QuatO$. Since $K$ and $K'$ are isomorphic but distinct, Theorem~\ref{subfieldbound} tells us that $\disc\quadO\disc\quadO'\geq p^2$. 
    
    Now $\ell\beta\in \quadO$ and $d\beta\in\quadO'$, so as in the proof of Proposition~\ref{largeinters}, we can conclude that
    \[d^2\geq\frac{\disc\quadO}{4\ell^2\nrd(\beta)}\frac{\disc\quadO'}{4\nrd(\beta)}\geq \frac{p^2}{16\ell^2\nrd(\beta)^2}.\qedhere\]
\end{proof}

\begin{cor}\label{isogwithoutl}
    Let $\ell$ be a prime, $M\in\ZZ$, and $E$ an $(M/\ell^2)$-small supersingular curve over $\FF_{p^2}$. Then there exists an $M$-small supersingular curve $E'$ over $\FF_{p^2}$ connected to $E$ by an $\ell$-isogeny, such that if $\phi:E\to E'$ is any isogeny with degree relatively prime to $\ell$, then 
    \[\deg\phi\geq \frac{p\ell}{4M}.\]
\end{cor}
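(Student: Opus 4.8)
The plan is to move to the quaternion side via the Deuring correspondence, build $E'$ by an ``ascending'' local step at $\ell$ (the reverse of Lemma~\ref{linkdown}), and then invoke Proposition~\ref{longifexcludel}. First I would record the relevant quadratic data: since $E$ is $(M/\ell^2)$-small, fix $\alpha\in\End(E)-\ZZ$ with $\deg\alpha\le M/\ell^2$, put $K=\QQ(\alpha)$, and let $\quadO:=\End(E)\cap K$, which is optimally embedded in $\End(E)$ and equals $\quadO_{K,f}$ for some conductor $f$. Since $\quadO\supseteq\ZZ[\alpha]$ we get $|\disc\quadO|\le|\disc\ZZ[\alpha]|=4\deg\alpha-\trd(\alpha)^2\le 4M/\ell^2$. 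Let $\beta\in\quadO-\ZZ$ be of minimal reduced norm (so $\beta$ is $\tfrac12\sqrt{\disc\quadO}$ or $\tfrac12(1+\sqrt{\disc\quadO})$); a short case check gives $\nrd(\beta)\le M/\ell^2$. The minimal polynomial of $\ell\beta$ has discriminant $\ell^2\disc\quadO$, so $\ZZ[\ell\beta]=\quadO_{K,\ell f}$, and $\nrd(\ell\beta)=\ell^2\nrd(\beta)\le M$.

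Next I would construct $E'$. The claim is that there is a maximal order $\QuatO$ with $\quadO_{K,\ell f}\cong\ZZ[\ell\beta]$ optimally embedded and $d(\End(E),\QuatO)=\ell$. I would get this by running the local construction in the proof of Lemma~\ref{linkdown} in reverse: completing at $\ell$, $\End(E)_\ell\cong M_2(\ZZ_\ell)$ carries an optimal embedding of $\ZZ_\ell[\beta]$, and one of the $\ell+1$ maximal orders at distance $\ell$ from $\End(E)_\ell$ in the Bruhat--Tits tree carries an optimal embedding of $\ZZ_\ell[\ell\beta]$ (the supersingular analogue of a descending $\ell$-isogeny in an isogeny volcano~\cite{suth_isogvolc}); patching this with $\End(E)_q$ for $q\neq\ell$ yields $\QuatO$. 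By the Deuring correspondence the linking ideal of norm $\ell$ gives an $\ell$-isogeny $E\to E'$ with $\End(E')\cong\QuatO$, and $E'$ is $M$-small because $\ell\beta\in\QuatO-\ZZ$ has reduced norm at most $M$.

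Finally I would prove the lower bound. Let $\phi\colon E\to E'$ be any isogeny with $\gcd(\deg\phi,\ell)=1$. Fixing $\End(E)$, the isogeny $\phi$ corresponds to a left ideal $I$ of $\End(E)$ with $\nrd(I)=\deg\phi$ and right order $\QuatO_R(I)\cong\End(E')$. Since $\ell\nmid\nrd(I)$, the localization $I_\ell$ is a unit ideal, hence $\QuatO_R(I)_\ell=\End(E)_\ell$, so $d(\End(E),\QuatO_R(I))$ is coprime to $\ell$. As $\End(E)$ has $\quadO_{K,f}$ optimally embedded and $\QuatO_R(I)$ has $\quadO_{K,\ell f}$ optimally embedded, Proposition~\ref{longifexcludel} (whose proof only uses such optimal embeddings, irrespective of the ambient subfields) gives
\[d(\End(E),\QuatO_R(I))\ge\frac{p}{4\ell\,\nrd(\beta)}\ge\frac{p\ell}{4M}.\]
Since $\deg\phi=\nrd(I)\ge d(\End(E),\QuatO_R(I))$, this yields $\deg\phi\ge p\ell/(4M)$, as required.

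The main obstacle I anticipate is the second step: Lemma~\ref{linkdown} is stated only in the descending direction, so one must check that its local construction reverses, i.e.\ that among the maximal orders at distance $\ell$ from $\End(E)$ there is always one into which $\ZZ[\ell\beta]$ embeds optimally. This is the standard (but split/inert/ramified case-dependent) count of descending edges of an $\ell$-volcano; each case has at least one descending edge because $\ell\ge 2$. One should also note that $\ell\neq p$ is needed (there is no optimal embedding of $\ZZ[p\beta]$ in the maximal order at $p$), which is harmless: $\deg\alpha\ge1$ forces $\ell^2\le M$, so $\ell<p$ whenever $M<p$.
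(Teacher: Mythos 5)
Your proof is correct and follows essentially the same route as the paper: construct $E'$ by reversing the local construction from Lemma~\ref{linkdown} to get a maximal order with $\ZZ[\ell\beta]$ optimally embedded at distance $\ell$, then apply Proposition~\ref{longifexcludel} to the left ideal corresponding to $\phi$. You supply somewhat more detail than the paper (the minimal-norm argument for $\nrd(\beta)\le M/\ell^2$, and a localization argument for why $\ell\nmid d(\End(E),\QuatO_R(I))$ where the paper simply notes $I\subseteq\End(E)\cap\QuatO''$), but these are minor expository variations rather than a different approach.
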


\begin{proof}
    For some imaginary quadratic field $K$ and some $\beta\in K$ with norm at most $M/\ell^2$, the quadratic order $\ZZ[\beta]$ is optimally embedded in $\End(E)$. Modifying the proof of Lemma~\ref{linkdown}, we can find a maximal order $\QuatO'$ with $\ZZ[\ell\beta]$ optimally embedded, and such that $d(\End(E),\QuatO')=\ell$. By the Deuring correspondence, we obtain an $M$-small curve $E'$ connected to $E$ by an $\ell$-isogeny. 
    
    Now any isogeny $\phi:E\to E'$ corresponds to an ideal $I$ linking $\End(E)$ to some maximal order $\QuatO''\cong\End(E')$. In particular, $I$ must be contained in $\QuatO\cap\QuatO''$, so if $\deg\phi=\nrd(I)$ is not divisible by $\ell$, then neither is $d(\End(E),\QuatO'')$. Hence, by Proposition~\ref{longifexcludel}, 
    \[\deg\phi=\nrd(I)\geq d(\End(E),\QuatO'')\geq \frac{p}{4\ell \nrd(\beta)}\geq\frac{p\ell}{4M}.\qedhere\]
\end{proof}

\end{document}